\documentclass[a4paper,12pt,reqno]{amsart}
\usepackage{fullpage}
\usepackage{mymacros}
\usepackage{tikz}
\usetikzlibrary{arrows.meta,arrows,cd}
\usepackage{pdflscape}
\usepackage[pagebackref]{hyperref}

\usepackage{listings}
\lstset{
  language={C},
  showspaces=false,
  showtabs=false,
  showstringspaces=false,
  backgroundcolor={\color[gray]{.85}},
  basicstyle={\ttfamily},
  identifierstyle={\small},
  commentstyle={\small\itshape},
  keywordstyle={\small\bfseries},
  ndkeywordstyle={\small},
  stringstyle={\small\ttfamily},
  frame={tb},
  breaklines=true,
  columns=[l]{fullflexible},
  xrightmargin=0.5mm,
  xleftmargin=0.5mm,
}

\usepackage{oitp}

%
%

\title{Compact moduli of marked noncommutative cubic surfaces}

\author{Tarig Abdelgadir}
\address{Mathematical Sciences \\
Loughborough University,
LE11 3TU,
United Kingdom
}
\email{t.abdelgadir@lboro.ac.uk}

\author{Shinnosuke Okawa}
\address{Department of Mathematics, Graduate School of Science,
Osaka University, Machikaneyama 1-1, Toyonaka, Osaka 560-0043, Japan}
\email{okawa@math.sci.osaka-u.ac.jp}

\author{Kazushi Ueda}
\address{Graduate School of Mathematica Sciences,
the University of Tokyo, Komaba 3-8-1, Meguro, Tokyo, 153-8914, Japan.}
\email{kazushi@ms.u-tokyo.ac.jp}

\date{\today}
\pagestyle{plain}

%
%

\begin{document}

%
%

\begin{abstract}
We introduce a compact moduli scheme
of marked noncommutative cubic surfaces
as the GIT moduli scheme of relations
of a quiver
associated with a full strong exceptional collection
on a cubic surface.
It is a toric variety
containing the configuration space of six points on a plane
in general position as a locally closed subvariety,
and birationally parametrizes a class of AS-regular $\bZ$-algebras.
\end{abstract}

\maketitle

\tableofcontents

%
%

\section{Introduction}\label{sc:Introduction}

A \emph{cubic surface} can be defined
either as
\begin{enumerate}
\item \label{it:hypersurface}
a smooth hypersurface of degree 3 in $\bP^3$,
\item \label{it:del Pezzo}
a del Pezzo surface of degree 3,
i.e.,
a smooth projective surface whose anti-canonical divisor is ample
of self-intersection number 3,
or
\item \label{it:blow-up}
the blow-up of $\bP^2$ at six points in general position.
\end{enumerate}
%
%
Here,
six points on $\bP ^{2}$
is said to be
\emph{in general position}
if
they do not lie on a conic
and no three among them are collinear.
The equivalence of the three definitions
is a well-known classical fact.
A \emph{marking} of a cubic surface
is defined
in such a way that
the moduli scheme
of marked cubic surfaces
is
the quotient
$
\Mcubic \coloneqq (\bP^2)^6_{\mathrm{gen}} / \Aut \bP^2
$
of 
the space
$
(\bP^2)^6_{\mathrm{gen}} \subset (\bP^2)^6
$
of six ordered points on $\bP^2$ in general position
by the free diagonal action of $\Aut \bP^2$.

A \emph{noncommutative projective scheme}
is defined in \cite{Artin-Zhang_NPS}
as a triple
$
X = (\scrC, \cO, s)
$
consisting of an abelian category $\scrC$,
an object $\cO$,
and an autoequivalence $s \colon \scrC \to \scrC$
satisfying certain conditions.
By generalizing the characterizations of a cubic surface,
one can try to
define a noncommutative cubic surface
either as
\begin{enumerate}
\item
a noncommutative hypersurface of degree 3
in a noncommutative $\bP^3$,
\item
a noncommutative projective scheme
$(\scrC, \cO, s)$
such that
$[2] \circ s^{-1}$ is the Serre functor
of the derived category of $\scrC$
and $h^0(s(\cO)) = 4$,
or
\item
the noncommutative blow-up of a noncommutative $\bP^2$
at six points
such that the Serre functor shifted by $-2$
is an ample autoequivalence of the abelian category.
\end{enumerate}
Here, a noncommutative $\bP^n$
is defined as a noncommutative projective scheme
of the form
$\proj S$,
where $S$ is an AS-regular algebra of dimension $n + 1$
whose Hilbert series coincides with that of the polynomial ring \cite{Artin_Schelter}.
By 
its noncommutative hypersurface,
we mean a noncommutative projective scheme
of the form $\proj S/(f)$
for a homogeneous normalizing element $f \in S$,
whose degree is called the degree of the hypersurface.
A noncommutative blow-up is defined in \cite{MR1846352}.

In this paper,
we give another definition of a noncommutative cubic surface
as
\begin{enumerate}[resume]
\item
the abelian category
$\qgr A$
for
an AS-regular $\tbz$-algebra $A$
of type $\Qt$,
\end{enumerate}
where
an AS-regular $\tbz$-algebra of type $\Qt$ and its
\(
   \qgr
\)
are defined in \pref{sc:AS-regular Z-algebras}.
The starting point is the fact that
any cubic surface $X$ has a full strong exceptional collection
\pref{eq:key collection},
so that one has an equivalence
\begin{align}
D^b \coh X \simeq D^b \module \bfk \Gamma,
\end{align}
where $\Gamma = (Q, I)$
is the quiver $Q$ shown in
\pref{fg:key quiver}
equipped with the two-sided ideal $I \subset \bfk Q$ of relations
such that $\bfk \Gamma \coloneqq \bfk Q / I$
is isomorphic to the total morphism algebra
of the collection.
Since (derived) deformation theories
of an abelian category and its derived category
are identical
(and described by the Hochschild cochain complexes
equipped with the structures of dg Lie algebras),
we can study deformations of
$
D^b  \coh X
$
instead of that of
$
\coh X
$,
which in turn is identical to that of
$
D^b \module \bfk \Gamma
$
by the derived invariance of the Hochschild dg Lie algebra
\cite{keller2003derived}.
The latter is identified with that of
$
\module \bfk \Gamma
$,
and then with that of
$
\bfk \Gamma
$
by the Morita invariance of the deformation theory.
Since deformations of $\bfk \Gamma$ is given by
that of $I$,
we can study noncommutative deformations of $\coh X$
by deformations of $I$.
An AS-regular $\bZ$-algebra $A$
is obtained as (the $\bZ$-algebra associated with)
the zero-th cohomology algebra
of the derived 3-preprojective algebra
(which is also known as the 3-Calabi--Yau completion)
of $\bfk \Gamma$
\cite{Keller_DCYC},
and admits a description
in terms of a quiver with potential
\cite{Ginzburg_CYA}.
The finite-dimensional algebra
$\bfk \Gamma$ is recovered from $A$
as the total morphism algebra
of a full strong exceptional collection
in $D^b \qgr A$.
A finite-dimensional algebra of finite global dimension
whose derived $n$-preprojective algebra
is quasi-isomorphic to its zero-th cohomology
is a \emph{quasi-Fano algebra of dimension $n$}
in the sense of \cite[Definition~1.2]{MR2770441}
(see also \cite{MR2874928}),
and an \emph{$n$-representation infinite algebra}
in the sense of \cite[Definition~2.7]{MR3144232}.

The sets $Q_0$ and $Q_1$
of vertices and arrows
of the quiver $Q$
consist of 9 and 18 elements respectively.
It has 9 relations,
each of which can be regarded as an element of $\bP^2$.
We define the
\emph{compact moduli scheme of marked noncommutative cubic surfaces}
as the geometric invariant theory quotient
$
\Mbar_\chi \coloneqq ((\bP^2)^9)^{\chiss} \GIT (\Gm)^{Q_1}
$
of $(\bP^2)^9$
by the action of the group
$
(\Gm)^{Q_1} \cong (\Gm)^{18}
$
rescaling the arrows,
where
$
\chi \in \Pic^{(\Gm)^{Q_1}} (\bP^2)^9 \cong \bZ^{27}
$
is the stability parameter.
The same idea has been used in \cite{abdelgadir2014compact}
to construct a compact moduli of noncommutative $\bP^2$.
The moduli scheme
$
\Mbar_\chi
$
is a toric variety depending on the stability condition $\chi$,
and the subscheme $M^\circ \subset \Mbar_\chi$
where all components of the homogeneous coordinate are non-zero
is a dense torus
$M^\circ \cong (\Gm)^8$
for any generic
$\chi$ in the secondary fan.

The main result of \cite{2007.07620} shows that
one can construct an AS-regular $\tbz$-algebra
of type $\Qt$
from a decuple
$
\lb
Y, \lb L_v \rb_{v \in Q_0}
\rb
$
consisting of a smooth projective curve $Y$
of genus one
and nine line bundles $(L_v)_{v \in Q_0}$
satisfying the admissibility condition
in \pref{df:admissible}.
This construction gives a rational map
$
\scrA \colon \Mell \dashrightarrow \Mbar_\chi
$
from the fine moduli scheme
$
\Mell
$
of admissible elliptic decuples
given in
\pref{pr:decuple}.

Given a relation $I \subset \bfk Q$ of the quiver $Q$
and 
a generic stability parameter $\theta \in \bZ^{Q_0}$,
the moduli scheme $N_\theta$
of $\theta$-stable representations of $\Gamma = (Q,I)$
of dimension vector $\bsone = (1,\ldots,1) \in \bN^{Q_0}$
is a projective variety
carrying the universal line bundles
$
(\cU_v)_{v \in Q_0}
$.
This construction gives a rational map
$
\scrN_\theta \colon \Mbar_\chi \dashrightarrow \Mell
$.

\begin{theorem} \label{th:birational}
There exists $\theta$
such that
$\scrA$
and
$\scrN_\theta$
are birational inverses of each other.
\end{theorem}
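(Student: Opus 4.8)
The plan is to pin down a suitable generic weight $\theta \in \bZ^{Q_0}$ and then prove that the composite $\scrN_\theta \circ \scrA$ restricts to the identity on a dense open subset of $\Mell$; a dimension count and the irreducibility of $\Mbar_\chi$ will then upgrade this to the statement that $\scrA$ and $\scrN_\theta$ are mutually inverse birational maps.

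First I would fix $\theta$. Recall that $\scrA$ sends an admissible elliptic decuple $\lb Y, \lb L_v \rb_{v \in Q_0} \rb$ to the AS-regular $\tbz$-algebra $A$ of type $\Qt$ of \cite{2007.07620}, that $\bfk \Gamma = \bfk(Q,I)$ is recovered from $A$ as the total morphism algebra of the full strong exceptional collection in $D^b \qgr A$, and that the resulting relations $I \subset \bfk Q$ define the point of $(\bP^2)^9$ whose GIT class is $\scrA \lb Y, \lb L_v \rb \rb$. The weight $\theta$ must be generic, so that $\theta$-semistability coincides with $\theta$-stability and $N_\theta$ is a fine moduli scheme carrying the universal line bundles $\lb \cU_v \rb_{v \in Q_0}$, and it must be chosen so that the $\theta$-stable representations of dimension vector $\bsone$ are precisely the ``point-like'' ones, i.e.\ so that the destabilising subrepresentations are exactly those supported on the proper terms of the exceptional collection. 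I expect this to be the analogue of the weight used for noncommutative $\bP^2$ in \cite{abdelgadir2014compact}, and singling it out should amount to a finite combinatorial check on $(Q,I)$.

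The heart of the argument is the identification, for $\lb Y, \lb L_v \rb \rb$ in a dense open of $\Mell$ and $I = \scrA \lb Y, \lb L_v \rb \rb$, of $N_\theta(I)$ with $Y$, compatibly with isomorphisms $\cU_v \cong L_v$. I would prove this by a noncommutative point-scheme computation in the style of Artin--Tate--Van den Bergh: using the explicit description of $A$ in terms of $\lb Y, \lb L_v \rb \rb$ from \cite{2007.07620} and of the exceptional collection, for each closed point $y \in Y$ one transports the point module $M_y$ of $A$ through the equivalence $D^b \qgr A \simeq D^b \module \bfk \Gamma$ to obtain a representation $R_y$ of $\Gamma$, and one checks that $R_y$ has dimension vector $\bsone$, that an arrow $a$ of $Q$ from $v$ to $w$ acts on $R_y$ by the value at $y$ of a fixed section of $L_w \otimes L_v^{-1}$ attached to $a$, that $R_y$ is $\theta$-stable for the chosen $\theta$, and that $y \mapsto R_y$ is an isomorphism $Y \to N_\theta$ under which the tautological family restricts to the $L_v$ (up to the overall twist by a line bundle on $Y$ that is divided out in $\Mell$). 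Injectivity and surjectivity of $y \mapsto R_y$, as well as the fact that the decuple $\lb N_\theta, \lb \cU_v \rb \rb$ is again admissible, should reduce to the admissibility condition of \pref{df:admissible} together with the Riemann--Roch computations it encodes.

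With this in hand, $\scrN_\theta \circ \scrA$ is defined and equal to $\mathrm{id}_{\Mell}$ on a dense open $U \subseteq \Mell$; in particular $\scrA|_U$ is injective, so $\overline{\scrA(U)}$ has dimension $\dim \Mell$. Since $\Mbar_\chi$ is irreducible, being toric, and has the same dimension as $\Mell$ (both are $8$-dimensional, as recorded for $\Mbar_\chi$ above and for $\Mell$ in \pref{pr:decuple}), the image of $\scrA$ is dense, so $\scrA$ is birational; composing $\scrN_\theta \circ \scrA = \mathrm{id}$ on the right with $\scrA^{-1}$ then shows $\scrN_\theta = \scrA^{-1}$ as rational maps, whence also $\scrA \circ \scrN_\theta = \mathrm{id}$. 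The main obstacle I foresee is precisely the point-scheme identification $N_\theta \cong Y$: it is where one must use the full strength of \pref{df:admissible} and of the construction of \cite{2007.07620}, and it is sensitive to the earlier choice of $\theta$; alternatively one could establish $\scrA \circ \scrN_\theta = \mathrm{id}$ directly, but this reduces to essentially the same reconstruction performed in the opposite order.
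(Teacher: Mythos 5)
Your overall skeleton --- fix a generic $\theta$, identify $N_\theta(I)$ with $Y$ compatibly with the universal bundles for $I = \scrA\lb Y, \lb L_v\rb_{v\in Q_0}\rb$, and conclude by injectivity plus a dimension count on the $8$-dimensional irreducible targets --- matches the paper's, and your concluding step is sound. The gap is in the central identification. You propose to prove both injectivity and surjectivity of $y \mapsto R_y$ by a point-scheme computation \`a la Artin--Tate--Van den Bergh and assert that these ``should reduce to the admissibility condition together with Riemann--Roch.'' Injectivity (indeed, that $Y \to N_\theta(I)$ is a closed immersion) does follow from positivity of the $L_v$, and the paper proves exactly this in \pref{lm:closed immersion} via an auxiliary morphism to $\bP^3$. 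But surjectivity is the statement that $N_\theta(I)$ has no points --- and no irreducible components --- beyond the image of $Y$; nothing in \pref{df:admissible} controls this, and a direct multilinearization analysis of the nine relations inside the $10$-dimensional projective toric variety $N_\theta$ is precisely the computation you have not carried out. This is where the paper has to work hardest.

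The paper's route around this is structurally different from your plan and worth recording: instead of proving surjectivity pointwise, it proves in \pref{lm:N_theta} that for general $[I]$ the scheme $N_\theta(I)$ is a connected, reduced, complete-intersection curve with trivial dualizing sheaf --- connectedness is verified by degenerating $I$ to the explicit monomial ideal $I_0$ of \eqref{eq:toric relations} and running a Macaulay2 computation, and triviality of $\omega_{N_\theta(I)}$ follows by adjunction --- and then invokes the purely curve-theoretic \pref{lm:structure of curve with trivial dualizing sheaf}: a closed immersion between connected reduced projective curves with trivial dualizing sheaves is automatically an isomorphism. Hence the closed immersion $c\colon Y \hookrightarrow N_\theta(I)$ is onto for free. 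To repair your argument you would either need to supply the ATV-style surjectivity computation in full, or import this mechanism. Separately, you leave $\theta$ unspecified; the paper uses the explicit generic weight \eqref{eq:the special theta}, and the verification that the tautological representations are $\theta$-stable is a short but nontrivial check on the dimension vectors of submodules that your sketch also defers.
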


We also prove the following:

\begin{theorem} \label{th:immersion}
By 
sending a marked cubic surface
to the relation of the quiver,
one obtains
a locally closed immersion
$
\iota \colon \Mcubic \to M^\circ
$.
\end{theorem}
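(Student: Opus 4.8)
The plan is to construct $\iota$ from the universal family of cubic surfaces over $(\bP^2)^6_{\mathrm{gen}}$, descend it to $\Mcubic$, and then show it is a locally closed immersion by recovering the marked surface from the relation of the quiver. First I would work with the blow-up $\pi \colon \mathcal{X} \to (\bP^2)^6_{\mathrm{gen}}$ of $\bP^2 \times (\bP^2)^6_{\mathrm{gen}}$ along the six universal sections. Restricting to the fibres the line bundles that enter the full strong exceptional collection \pref{eq:key collection} produces locally free sheaves on $\mathcal{X}$ whose relative morphism sheaves assemble into a sheaf of $\mathcal{O}_{(\bP^2)^6_{\mathrm{gen}}}$-algebras; Zariski-locally this sheaf is isomorphic to $\bfk Q / I$ with the nine relations varying, each as a point of $\bP^2$, so it yields a morphism $(\bP^2)^6_{\mathrm{gen}} \to (\bP^2)^9$. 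The only ambiguity is the choice of representatives of the exceptional objects: the line bundles are determined only up to isomorphism, hence the morphism spaces only up to an independent rescaling of each arrow, which is precisely the action of $(\Gm)^{Q_1}$. Composing with the GIT quotient therefore gives a well-defined morphism $(\bP^2)^6_{\mathrm{gen}} \to \Mbar_\chi$, and since an isomorphism of cubic surfaces induces an isomorphism of the corresponding morphism algebras which on the nine relations amounts to rescaling the arrows, this morphism is invariant under the free action of $\Aut \bP^2$ and descends to $\iota \colon \Mcubic \to \Mbar_\chi$.

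Next I would check that $\iota$ factors through $M^\circ$. Each of the nine relations is a linear combination of three paths of $Q$, hence a point of $\bP^2$ whose three homogeneous coordinates are explicit polynomial expressions in the coordinates of $p_1, \dots, p_6$. A short computation shows that one of these coordinates vanishes precisely when three of the points become collinear or all six of them lie on a conic, that is, precisely when the configuration leaves $(\bP^2)^6_{\mathrm{gen}}$; so on the general position locus every homogeneous coordinate on $(\bP^2)^9$ stays nonzero and the image lies in $M^\circ$.

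To prove that $\iota$ is a locally closed immersion I would recover the marked cubic surface from a point of the image. Such a point determines the relation $I$, hence the algebra $\bfk \Gamma = \bfk Q / I$ together with its vertex idempotents, up to the $(\Gm)^{Q_1}$-action, and one has $D^b \coh X \simeq D^b \module \bfk \Gamma$; since a cubic surface is del Pezzo, its anticanonical bundle is ample, so $X$ is determined up to isomorphism by its derived category (Bondal--Orlov), while the marking is read off from the classes of the exceptional objects in \pref{eq:key collection}. This makes $\iota$ injective on points. To upgrade this to a locally closed immersion I would (i) show that the differential of $\iota$ is injective, using the identification --- recalled in \pref{sc:Introduction} through the Morita and derived invariances of the Hochschild dg Lie algebra --- of first-order deformations of the marked cubic surface with those of the relation $I$; and (ii) exhibit the image, Zariski-locally in $M^\circ$, as the common zero locus of the polynomial equations expressing that nine points of $\bP^2$ come from a single configuration of six points via the formulas of the first step, which is a closed condition. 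Then $\iota$ is an isomorphism onto a locally closed subvariety. As an alternative to (i)--(ii) one can write the inverse morphism on the image directly, recovering $p_1, \dots, p_6$ up to $\Aut \bP^2$ as explicit rational functions of the nine relation points.

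The main obstacle is the explicit bookkeeping underlying the first and third steps: identifying the nine relations as honest morphisms in the universal family, tracking how their homogeneous coordinates degenerate as the six points specialize, and pinning down the equations that cut out the image. Trying to reduce this to \pref{th:birational} does not obviously help, since the natural rational map $\Mcubic \dashrightarrow \Mell$ given by anticanonical elliptic curves is far from injective; so the reconstruction has to be carried out by hand.
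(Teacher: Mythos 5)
Your proposal follows essentially the same route as the paper: send a configuration of six points to the nine relations of the quiver, check that all $27$ coefficients are nonzero on the general-position locus so that the image lands in $M^\circ$, and recover the marked cubic surface from the derived category of $\bfk Q/I$ together with a basis of its Grothendieck group. Note only that the ``short computation'' you defer in your second step is where the paper's entire proof actually lives (explicit formulas for the lines $\ell_{ij}$, the conics $q_i$, and the coefficient triples $(s,t,u)$ of each relation in the normalization \eqref{equation:the matrix p}), whereas your additional steps (i)--(ii) addressing the injectivity of the differential are a genuine refinement: the paper's argument as written stops at recovering the marked surface from the potential, i.e.\ at injectivity on points.
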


It is an interesting problem
to study
the closure of the image of $\iota$ inside $\Mbar_\chi$,
and find a modular interpretation of the boundary.

This paper is organized as follows:
In \pref{sc:marked cubic surfaces},
we recall basic definitions and facts
about marked cubic surfaces,
which can be found, e.g., in \cite{MR2964027}.
In \pref{sc:quiver},
we recall basic definitions on quivers,
$\bZ$-algebras,
and
helices.
In \pref{sc:AS-regular},
we introduce a full strong exceptional collection
of line bundles on cubic surfaces,
a quiver $Q$,
and a class of AS-regular $\bZ$-algebras
which play central roles in this paper.
In \pref{sc:Mrel},
we introduce a toric variety $\Mbar_\chi$
as a compact moduli of relations of the quiver $Q$.
In \pref{sc:N},
we discuss the moduli schemes of stable representations
of the quiver $Q$
with and without relations.
\pref{th:birational} is proved in \pref{sc:Mell},
and \pref{th:immersion} is proved in \pref{sc:immersion},
In \pref{sc:blow-up},
we discuss the relation
with the noncommutative blow-up
in the sense of Van den Bergh.

%
%

\subsection {Notation and Conventions}
Unless otherwise stated,
we work over an algebraically closed field $\bfk$ of characteristic $0$,
so that schemes are defined over $\bfk$,
and categories and functors are linear over $\bfk$.
Varieties are separated integral schemes of finite type,
and surfaces are varieties of dimension two.
We follow the Grothendieck's convention
for the affine space and the projective space
so that
$
\bA V \coloneqq \Spec \Sym V
$
and
$
\bP V = \Proj \Sym V
$
for a $\bfk$-vector space $V$.
The dual space of a $\bfk$-vector $V$
is denoted by $V^\dual$.
Modules are right modules
unless otherwise specified.
For a pair $(X,Y)$ of objects
in a dg category,
the dg vector space of morphisms
and its zero-th cohomology group
is denoted by
$\hom(X,Y)$
and
$\Hom(X,Y)$
respectively.
We also write
$
\Hom^i(X,Y)
\coloneqq
\Hom(X,Y[i])
$
for $i \in \bZ$.

%
%

\subsection{Acknowledgements}
S.~O.~was partially supported
by Grants-in-Aid for Scientific Research
(60646909,
16H05994,
16K13746,
16H02141,
16K13743,
16K13755,
16H06337,
19KK0348,
20H01797,
20H01794,
21H04994,
23H01074)
and the Inamori Foundation.
K.~U.~was partially supported
by Grants-in-Aid for Scientific Research
(16K13743,
16H03930,
21K18575).
%
%

\section{Marked cubic surfaces}
\label{sc:marked cubic surfaces}

\subsection{Cubic surfaces}

A \emph{weak del Pezzo surface}
is a connected smooth projective surface
whose anti-canonical divisor is nef and big.
It is a \emph{del Pezzo surface}
if the anti-canonical divisor is ample.
The \emph{degree} of a weak del Pezzo surface
is the self-intersection number of the anti-canonical divisor.
A \emph{cubic surface}
is a del Pezzo surface of degree 3.
The anti-canonical bundle
$
\omega _{X} ^{-1}
$
of a cubic surface $X$
is very ample with
$
h ^{0} \left( X, \omega _{X} ^{-1}\right) = 4
$,
so that
\begin{enumerate}
\item
the anti-canonical embedding
\begin{align}
  \Phi _{ \left| \omega _{X} ^{-1} \right|} \colon X \hookrightarrow \bP H ^{0} \left( X, \omega _{X} ^{-1} \right) \simeq \bP ^{3}
\end{align}
realizes \(X\) as a cubic hypersurface of
\(
   \bP ^{ 3 }
\),
and
\item
a pair of cubic surfaces are isomorphic to each other if and only if they are projectively equivalent.
\end{enumerate}
A curve on a cubic surface
is said to be a \emph{line}
if it has self-intersection number $-1$.
They are exactly the curves
which goes to a line
in the anti-canonical embedding.
Any cubic surface contains
exactly 27 lines.
Any six mutually disjoint lines on a cubic surface $X$
can be contracted to obtain $\bP^2$,
and the images of the six lines are in general position on $\bP^2$.
Conversely,
the blow-up of $\bP^2$
at any six points in general position
is a cubic surface.

\subsection{Markings}

A \emph{geometric marking} of a cubic surface $X$
is a sequence $(l_i)_{i=1}^6$
of six lines on $X$
which are mutually disjoint.
A \emph{geometrically marked cubic surface}
is a pair
$
(X,(l_i)_{i=1}^6)
$
of a cubic surface $X$
and a geometric marking.
An \emph{isomorphism}
of a pair
$
(
(X,(l_i)_{i=1}^6),
(X',(l_i')_{i=1}^6)
)
$
of geometrically marked cubic surfaces
is an isomorphism
$
\phi \colon X \to X'
$
of cubic surfaces
such that
$\phi(l_i) = l_i'$
for any $1 \le i \le 6$.

Let $\LL$ be the lattice
generated by $\{ \bfe_i \}_{i=0}^6$
as a free abelian group
equipped with the symmetric bilinear form
\begin{align}
\bfe_i \cdot \bfe_j =
\begin{cases}
1 & i=j=0, \\
-1 & 1 \le i = j \le 6, \\
0 & \text{otherwise}.
\end{cases}
\end{align}
The orthogonal lattice
$\delta^\bot$
of the element
\begin{align}
\delta \coloneqq 3 \bfe_0 - \bfe_1 - \cdots - \bfe_6 \in \LL
\end{align}
can be identified with the root lattice $\sfE_6$
of the
negative definite
root system of type $E_6$.

A \emph{lattice-theoretic marking}
of a cubic surface $X$
is an isometry
$
\varphi \colon \LL \simto \Pic X
$
sending $\delta$ to \(\omega ^{ - 1 } _{ X }\).
A \emph{lattice-theoretically marked cubic surface}
is a pair $(X, \varphi)$
of a cubic surface
and a lattice-theoretic marking.
An \emph{isomorphism}
of a pair
$
(
(X,\varphi),
(X',\varphi')
)
$
of lattice-theoretically marked cubic surfaces
is an isomorphism
$
\phi \colon X \to X'
$
of cubic surfaces
such that
$\varphi = \phi^* \circ \varphi'$.

Since the notions of a geometric marking
and a lattice-theoretic marking are equivalent,
we will use them interchangeably
and refer to them simply as a \emph{marking}.

\subsection{Moduli}

Let
$
(\bP^2)^6_{\mathrm{gen}} \subset (\bP^2)^6
$
be the configuration space
of six points on $\bP^2$ in general position.
The quotient scheme
$
\cMcubic \coloneqq (\bP^2)^6_{\mathrm{gen}} / \Aut \bP^2
$
is the moduli scheme
of marked cubic surfaces,
and the quotient stack
$
\ld \cMcubic \middle/ W(\sfE_6) \rd
$
is the moduli stack
of cubic surfaces,
where $W(\sfE_6) \cong \rO(\sfE_6)$
is the Weyl group of type $\sfE_6$.

While moduli of (marked) cubic surfaces
is a classical subject
which goes back to the nineteenth century,
they are still studied today
from various points of view,
such as
uniformizations
and compactifications.
See,
e.g.,
\cite{
MR662660,
MR1910264,
MR2196731,
MR2534095}
and references therein
for some of recent results.

\section{Quivers, $\bZ$-algebras, and helices}
\label{sc:quiver}

\subsection{Quivers and path algebras}

A \emph{quiver}
$
Q = (Q_0, Q_1, s, t)
$
consists of
\begin{itemize}
 \item
a set $Q_0$ of vertices,
 \item
a set $Q_1$ of arrows, and
 \item
two maps
$s$ and $t$
from $Q_1$ to $Q_0$.
\end{itemize}
For an arrow $a \in Q_1$,
the vertices $s(a)$ and $t(a)$
are
called the \emph{source}
and the \emph{target}
of the arrow $a$
respectively.
A \emph{path}
is either
\begin{itemize}
\item
a path of length $k$
for some positive integer $k$,
which is a sequence
$p = (a_k, a_{k-1}, \dots, a_{1}) \in (Q_1)^k$
of arrows
such that $s(a_{i+1}) = t(a_i)$
for all $i \in \{ 1, \dots, k-1 \}$,
in which case we define the \emph{source} and the \emph{target} of $p$
by $s(p) \coloneqq s(a_1)$
and $t(p) \coloneqq t(a_k)$,
or 
\item
a path $\bfe_v$ of length zero
for some $v \in Q_0$,
in which case we set
$
s(\bfe_v)
= t(\bfe_v)
= v
$.
\end{itemize}
The \emph{concatenation} of paths
is defined by
\begin{align}
 (b_m, \dots, b_1) \cdot (a_n, \dots, a_1)
  = \begin{cases}
     (b_m, \dots, b_1, a_n, \dots, a_1) & s(b_1) = t(a_n), \\
      0 & \text{otherwise}
    \end{cases}
\end{align}
for paths of positive length,
and
\begin{align}
p \cdot \bfe_v
&=
\begin{cases}
p & s(p) = v, \\
0 & \text{otherwise},
\end{cases}
\\
\bfe_v \cdot p
&=
\begin{cases}
p & t(p) = v, \\
0 & \text{otherwise}
\end{cases}
\end{align}
if the length of at least one of the paths is zero.
For a non-negative integer $k$,
let
$\scrP_k(Q)$
be the set of paths of length $k$.
The \emph{path algebra} $\bfk Q$
is the algebra
freely generated
by the set
$
\scrP(Q)
\coloneqq
\coprod_{k=0}^\infty \scrP_k(Q)
$
of paths
as a vector space,
and the multiplication is defined
by the concatenation of paths.
It is graded by the length of the path as
\begin{align}
\bfk Q = \bigoplus_{k=0}^\infty \lb \bfk Q \rb_k,
\end{align}
where
$
\lb \bfk Q \rb_k
\coloneqq \bfk \scrP_k(Q)
$
is the vector space
freely generated by $\scrP_k(Q)$.
A \emph{quiver with relations} is a pair
$
\Gamma = (Q, I)
$
consisting of a quiver $Q$
and a two-sided ideal $I$
of the path algebra $\bfk Q$.
The \emph{path algebra} of $\Gamma$ is defined as
\begin{align}
\bfk \Gamma \coloneqq \bfk Q / I.
\end{align}

\subsection{$J$-algebras for a set $J$}

Let $J$ be a set.
A \emph{$J$-algebra} over
\(
   \bfk
\)
is a
\(
   \bfk
\)-linear category whose set of objects is $J$.
A $J$-algebra can equivalently be defined as an algebra over the semisimple algebra
$
\bfk^J \coloneqq \prod_{J} \bfk.
$
The path algebra $\bfk \Gamma$
is naturally a $Q_0$-algebra,
and any $Q_0$-algebra
can be described by a quiver with relations
with $Q_0$ as the set of vertices.

\subsection{AS-regular $\bZ$-algebras}

A $\bZ$-algebra $A$
consists of vector spaces
$A_{ji} \coloneqq \Hom_A(i,j)$
for $i,j \in \bZ$,
the identify element
$
\bfe_i
$
for $i \in \bZ$,
and linear maps
$A_{kj} \otimes A_{ji} \to A_{ki}$
satisfying a set of axioms.
A module $M$ over a $\bZ$-algebra $A$
is a functor from $A$ to the category of vector spaces,
i.e.,
a collection of vector spaces
$M_i$ for $i \in \bZ$
and linear maps
$M_k \otimes A_{ki} \to M_i$
for $i,k \in \bZ$
satisfying the obvious axioms.
The category $\Module A$
of $A$-modules is a Grothendieck category.
A $\bZ$-algebra $A$ is said to be \emph{positively graded}
if $A_{ji} = 0$ for any $j < i$.
A positively graded $\bZ$-algebra is said to be \emph{connected}
if $A_{ii} = \bfk \bfe_i$ for any $i \in \bZ$.
For a connected $\bZ$-algebra $A$
and an integer $i$,
let $P_i \coloneqq \bfe_i A$
(resp.~$S_i \coloneqq \bfe_i A \bfe_i$)
be the $i$-th projective (resp.~simple)
$A$-module.
One has
$
(P_i)_j = A_{ij}
$
(resp.~$(S_i)_i = \bfk$ and $(S_i)_j = 0$
for any $j \ne i$).
For a non-negative integer $d$,
a connected $\bZ$-algebra $A$ is said to be
\emph{AS-Gorenstein} of dimension $d$ if for any
\(
   i
   \in
   \bZ
\)
there exists the unique
\(
   j _{ 0 }
   \in
   \bZ
\)
as follows.
\begin{align}
    \Ext^k(S_i, P_j) \cong
    \begin{cases}
    \bfk & j = j _{ 0 } \text{ and } k = d, \\
    0 & \text{otherwise},
    \end{cases}
\end{align}
where `AS' stands for Artin--Schelter
\cite{Artin_Schelter}.
An AS-Gorenstein $\bZ$-algebra is said to be \emph{AS-regular}
if $\Module A$ has finite global dimension and
\(
   A
\)
has finite GK dimension; namely, there exits a polynomial
\(
   p ( t )
   \in
   \bQ [ t ]
\)
such that
\(
   \dim
   A
   _{
    i j
   }
   <
   p ( i - j )
\)
for any pair
\(
   ( i, j )
   \in
   \bZ ^{ 2 }
\).

%
%
\subsection{Potentials}

For a non-negative integer $k$,
consider the action of the cyclic group of order $k$
on
$\lb Q_1 \rb^k$
generated by
$
(a_k,a_{k-1},\ldots,a_1)
\mapsto
(a_1,a_k,\ldots,a_2).
$
A \emph{cyclic path} of length $k$
is an element
$
c = [a_k,\ldots,a_1] \in \lb Q_1 \rb^k/(\bZ/k\bZ)
$
such that $(a_k, \ldots, a_1) \in \scrP(Q)$ and
$
s(a_1) = t(a_k)
$.
Let $\scrC_k(Q)$ be the set of cyclic paths of length $k$,
and $\bfk \scrC_k(Q)$ the vector space
freely generated by $\scrC_k(Q)$.
For each $a \in Q_1$,
we define a map
\begin{align}
\frac{\partial}{\partial a} \colon \bfk \scrC_k(Q) \to \lb \bfk Q \rb_{k-1}
\end{align}
by sending a basis $[a_k,\ldots,a_1]$ to
\begin{align}
  \sum_{i=1}^k \delta_{a_i,a} a_{i-1} \cdots a_1 a_k \cdots a_{i+1}
\end{align}
and extending linearly,
where
\begin{align}
  \delta_{a,a'} =
\begin{cases}
  1 & a = a', \\
  0 & \text{otherwise}
\end{cases}
\end{align}
is the Kronecker delta.
A \emph{quiver with potential}
is a pair $(Q, \Phi)$
consisting of a quiver $Q$
and an element
$
\Phi \in \bfk \scrC(Q) \coloneqq \bigoplus_{k=0}^\infty \bfk \scrC_k(Q)
$
called a \emph{potential}.
A quiver with potential $(Q, \Phi)$
gives rise to a quiver with relations $\Gamma = (Q, \partial \Phi)$
where
\begin{align}
    \partial \Phi \coloneqq \lb \frac{\partial}{\partial a} \Phi \rb_{a \in Q_1}.
\end{align}

%
%

\subsection{Exceptional collections and helices}
 \label{sc:Exceptional collection and helices}

A dg category is \emph{proper}
if
$\hom(X,Y)$ is a perfect dg vector space
(i.e., quasi-equivalent to a bounded complex
of finite-dimensional vector spaces)
for any objects $X, Y$.
An object $E$ of a proper dg category $\scrD$ is \emph{exceptional}
if
\begin{align}
 \hom(E, E) \simeq \bfk.
\end{align}
A sequence
$
(E_1, \ldots, E_\ell)
$
of exceptional objects
is an \emph{exceptional collection} if
\begin{align}
 \hom(E_j, E_i) \simeq 0
\end{align}
for any $1 \le i < j \le \ell$.
An exceptional collection
$
(E_1, \ldots, E_\ell)
$
is \emph{strong} if
\begin{align}
 \Hom^i(E_j, E_k) = 0
\end{align}
for any $i \ne 0$
and any $j, k \in \{ 1, \ldots, \ell \}$.
An exceptional collection
$
(E_1, \ldots, E_\ell)
$
is \emph{full}
if $\scrD$ is equivalent to
the pretriangulated hull
(i.e., the smallest full subcategory
closed under shifts and cones)
of $\{ E_1, \ldots, E_\ell \}$.

If
$
(E_1, \ldots, E_\ell)
$
is a full strong exceptional collection in $\scrD$,
then one has a quasi-equivalence
\begin{align}
 \hom_{\scrD} \lb \bigoplus_{i=1}^{\ell} E_i , - \rb
 \colon
 \scrD \simto D^b \module A
\end{align}
of dg categories
between $\scrD$ and the dg category of bounded complexes
of finitely-generated right modules
over the total morphism algebra
$
 A = \bigoplus_{i,j=1}^{ \ell } \Hom(E_i, E_j)
$
\cite{Bondal_RAACS,Rickard,Keller_DDC}.

For $1 \le i \le \ell-1$,
the \emph{left mutation}
of an exceptional collection
$
 \tau = \lb E_j \rb_{j=1}^\ell
$
at position $i$
is defined by
\begin{align}
 L_i (\tau) \coloneqq
  \lb E_1, \ldots, E_{i-1}, L_{E_i} E_{i+1}, E_i, E_{i+2}, \ldots, E_\ell \rb
\end{align}
where
\begin{align}
 L_{E_i} E_{i+1} \coloneqq \Cone \lb \hom(E_i, E_{i+1}) \otimes E_i \xto{\ev} E_{i+1} \rb.
\end{align}
It is inverse
to the \emph{right mutation}
defined by
\begin{align}
 R_i (\tau) \coloneqq
  ( E_1, \ldots, E_{i-1}, E_{i+1}, R_{E_{i+1}} E_i, E_{i+2}, \ldots, E_\ell )
\end{align}
where
\begin{align}
  R_{E_{i+1}} E_i \coloneqq \Cone \lb E_i \xto{\coev} \hom(E_i, E_j)^\dual \otimes E_j \rb.
\end{align}
The \emph{helix}
generated by an exceptional collection $\lb E_i \rb_{i=1}^\ell$
is the sequence $\lb E_i \rb_{i \in \bZ}$
of exceptional objects
satisfying
\begin{align} \label{eq:helix}
 E_{i-\ell} = L_{E_{i-\ell+1}} \circ L_{E_{i-\ell+2}}
  \circ \cdots \circ L_{E_{i-1}}(E_i)[ - 2 ]
\end{align}
for any $i \in \bZ$.
The shift is chosen in such a way that
$E_{i-\ell} \cong \bS(E_i)[-2]$,
so that
if
$\scrD \simeq D^b \coh X$
for a smooth projective surface $X$,
then one has
$E_{i-\ell} \cong E_i \otimes \omega_X$.
The length $\ell$ of the exceptional collection
generating the helix is called the \emph{period} of the helix.
For any $i \in \bZ$,
the exceptional collection $\lb E_j \rb_{j=i}^{i+\ell-1}$
consisting of $\ell$ consecutive objects in the helix
is called a \emph{foundation}
of the helix.
A helix
$
\lb E_i \rb_{i \in \bZ}
$
is said to be \emph{strong}
if every foundation is strong.
It is said to be \emph{acyclic} if it satisfies a stronger condition that
$
 \Hom^{k}(E_i, E_j) = 0
$
for any $-\infty < i < j < \infty$
and any $k \ne 0$.
Acyclicity is closely related to,
but different from,
the \emph{geometricity}
in \cite{MR1230966};
since the shift in \eqref{eq:helix} is absent in \cite{MR1230966},
geometricity and acyclicity coincides
if and only if $\ell=3$.
An acyclic helix
$(E_i)_{i \in \bZ}$
gives the $\bZ$-algebra $A$
defined by
$
A_{ji} = \Hom(E_i, E_j),
$
which is AS-regular of dimension 3
as explained in \cite{2007.07620}.

\section{AS-regular $\tbz$-algebras}
\label{sc:AS-regular}

\subsection{An exceptional collection}

\begin{figure}
\begin{tikzpicture}
[
scale=2.5,
inner sep=.5mm,
node distance=10mm,
every node/.style={fill=white},
arrows = {-Stealth[length=2mm]}
]
    \path node (trivial) at ( 0, 0 ) [shape = circle, draw] {$0, 0$}
          node (e1 + e2 + e3 - h) at ( 0, - 1 ) [shape = circle, draw] {$1, 0$}
          node (sum e1 to e6 - 2h) at ( 0, - 2 ) [shape = circle, draw] {$2, 0$}
          node (e1) at ( 1, 0 ) [shape = circle, draw] {$0, 1$}
          node (e2) at ( 1, - 1 ) [shape = circle, draw] {$1, 1$}
          node (e3) at ( 1, - 2 ) [shape = circle, draw] {$2, 1$}
          node (h - e4) at ( 2, 0 ) [shape = circle, draw] {$0, 2$}
          node (h - e5) at ( 2, - 1 ) [shape = circle, draw] {$1, 2$}
          node (h - e6) at ( 2, - 2 ) [shape = circle, draw] {$2, 2$};
    
    \draw
    (trivial) edge (e1)
    (trivial) edge (e2)
    (trivial) edge (e3)

    (e1 + e2 + e3 - h) edge (e2)
    (e1 + e2 + e3 - h) edge (e3)
    (e1 + e2 + e3 - h) edge (e1)

    (sum e1 to e6 - 2h) edge (e3)
    (sum e1 to e6 - 2h) edge (e1)
    (sum e1 to e6 - 2h) edge (e2)


    (e1) edge (h - e4)
    (e1) edge (h - e5)
    (e1) edge (h - e6)

    (e2) edge (h - e5)
    (e2) edge (h - e6)
    (e2) edge (h - e4)

    (e3) edge (h - e6)
    (e3) edge (h - e4)
    (e3) edge (h - e5);
\end{tikzpicture}
\caption{The quiver $Q$}
\label{fg:key quiver}
\end{figure}
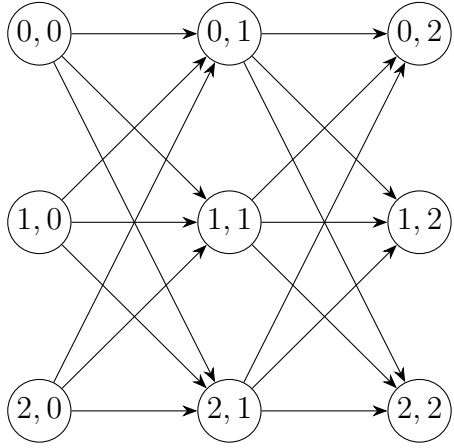

A cubic surface $X$ has
a full strong exceptional collection
consisting of nine line bundles
\begin{align} \label{eq:key collection}
\begin{aligned}
E_{0,0} &\coloneqq \cO_X,
&
E_{0,1} &\coloneqq\cO_X \lb l_1 \rb,
&
E_{0,2} &\coloneqq \cO_X \lb l-l_4 \rb,
\\
E_{1,0} &\coloneqq \cO_X \lb -2l + \textstyle{ \sum_{ i= 1 }^6 l_i } \rb,
&
E_{1,1} &\coloneqq \cO_X \lb l_2 \rb,
&
E_{1,2} &\coloneqq \cO_X \lb l-l_5 \rb,
\\
E_{2,0} &\coloneqq \cO_X \lb -l+l_1+l_2+l_3 \rb,
&
E_{2,1} &\coloneqq \cO_X \lb l_3 \rb,
&
E_{2,2} &\coloneqq\cO_X \lb l-l_6 \rb,
\end{aligned}
\end{align}
where $l$ is the strict transform of the hyperplane in $\bP^2$
and $l_i$ for $1 \le i \le 6$ are exceptional divisors.
This is a \emph{three-block collection}
in the sense that the collection is divided into three \emph{blocks}
such that $\Ext^k(X,Y) = 0$ for any $k \in \bZ$
if $X$ and $Y$ in the same block.
The standard definition of an exceptional collection
recalled in
\pref{sc:Exceptional collection and helices}
requires a choice of a total order,
which is unique only up to permutations
within each block.
We avoid making this unnatural choice,
and use $\tbt$ as the index set for an exceptional collection,
and $\tbz$ as that for the associated helix.
We freely identify $\{ 0, 1, 2 \}$
with $\bZ/3\bZ$
by the inclusion
$
\{ 0, 1, 2 \} \hookrightarrow \bZ
$
composed with the projection
$
\bZ \to \bZ/3\bZ
$.
The identification of the index set $\tbt$
for the exceptional collection \eqref{eq:key collection}
on a cubic surface
with $(\bZ/3\bZ)^2$
is natural from the point of view of McKay correspondence
\cite{Kapranov-Vasserot,Bridgeland-King-Reid}
as explained in 
\cite[Section 7]{Ueda_SMQCCS}
(cf.~also
\cite{Ueda-Yamazaki_NBTMQ,
MR3100950}).
The three-block collection
\pref{eq:key collection}
is related
to the three-block collection
$\tau_{(6.1)}$
in \cite[p.~453]{MR1642152}
by a choice of a foundation of the helix.
The helix $(E_{\bsi})_{\bsi \in \tbz}$
generated by $(E_{\bsi})_{\tbt}$
is acyclic,
so that the resulting
$\tbz$-algebra $A$ is AS-regular of dimension 3.

\subsection{A quiver with relations}

The total morphism algebra
of the collection \pref{eq:key collection}
is described by
the quiver
$
Q = (Q_0, Q_1, s, t)
$
shown in
\pref{fg:key quiver}.
We identify a vertex
$
(i,j) \in Q_0 \coloneqq \tbt
$
with the exceptional object $E_{i,j}$
in the collection \pref{eq:key collection}.
For any
$
(i,j,k) \in \bZ/3\bZ \times \{ 0,1 \} \times \bZ/3\bZ
$,
the space
$\Hom(E_{i,j}, E_{i+k,j+1})$
is one-dimensional 
since six points at the center of the blow-up
are in general position.
We choose a basis
$
x_{i,j,k}
$
for each of these spaces,
and identify it with an arrow of $Q$.
This induces a surjection
$
\bfk Q
\to
\End \lb \bigoplus_{v \in Q_0} E_v \rb
$
whose kernel $I$ satisfies
\begin{align} \label{eq:dimension matrix}
  \dim \bfe_{i',j'} I \bfe_{i,j} =
  \begin{cases}
    1 & j = 0 \text{ and } j' = 2, \\
    0 & \text{otherwise}.
  \end{cases} 
\end{align}

\begin{figure}[t]
\begin{tikzpicture}
[
scale=2.5,
inner sep=.5mm,
node distance=10mm,
every node/.style={fill=white},
arrows = {-Stealth[length=2mm]}
]

    \path node (trivial) at ( 0, 0 ) [shape = circle, draw] {0, 0}
          node (e1 + e2 + e3 - h) at ( 0, - 1 ) [shape = circle, draw] {1, 0}
          node (sum e1 to e6 - 2h) at ( 0, - 2 ) [shape = circle, draw] {2, 0}
          node (e1) at ( 1, 0 ) [shape = circle, draw] {0, 1}
          node (e2) at ( 1, - 1 ) [shape = circle, draw] {1, 1}
          node (e3) at ( 1, - 2 ) [shape = circle, draw] {2, 1}
          node (h - e4) at ( 2, 0 ) [shape = circle, draw] {0, 2}
          node (h - e5) at ( 2, - 1 ) [shape = circle, draw] {1, 2}
          node (h - e6) at ( 2, - 2 ) [shape = circle, draw] {2, 2}
          node (3h - sum e1 to e6) at ( 3, 0 ) [shape = circle, draw] {0, 0}
          node (2h - e4 - e5 - e6) at ( 3, - 1 ) [shape = circle, draw] {1, 0}
          node (h) at ( 3, - 2 ) [shape = circle, draw] {2, 0};
    
    \draw
    (trivial) edge (e1)
    (trivial) edge (e2)
    (trivial) edge (e3)

    (e1 + e2 + e3 - h) edge (e2)
    (e1 + e2 + e3 - h) edge (e3)
    (e1 + e2 + e3 - h) edge (e1)

    (sum e1 to e6 - 2h) edge (e3)
    (sum e1 to e6 - 2h) edge (e1)
    (sum e1 to e6 - 2h) edge (e2)


    (e1) edge (h - e4)
    (e1) edge (h - e5)
    (e1) edge (h - e6)

    (e2) edge (h - e5)
    (e2) edge (h - e6)
    (e2) edge (h - e4)

    (e3) edge (h - e6)
    (e3) edge (h - e4)
    (e3) edge (h - e5)

    (h - e4) edge (3h - sum e1 to e6)
    (h - e4) edge (2h - e4 - e5 - e6)
    (h - e4) edge (h)
    
    (h - e5) edge (2h - e4 - e5 - e6)
    (h - e5) edge (h)
    (h - e5) edge (3h - sum e1 to e6)
    
    (h - e6) edge (h)
    (h - e6) edge (3h - sum e1 to e6)
    (h - e6) edge (2h - e4 - e5 - e6);
\end{tikzpicture}
\caption{The rolled-up quiver $\Qt$}
\label{fg:rolled-up quiver}
\end{figure}

\subsection{The rolled-up quiver}

A non-zero element $b \in \bfk Q$ is said to be \emph{homogeneous}
if there exist
$v, v' \in Q_0$
such that $b = \bfe_{v'} b \bfe_v$,
in which case
we write $s(b) \coloneqq v$ and $t(b) \coloneqq v'$.
Fix a homogeneous basis $\bfB$ of $I$.
In addition,
we fix a finite set $B$ and a bijection $r \colon B \to \bfB$
to avoid confusion in the following construction.
The \emph{rolled-up quiver}
$
\Qt = (\Qt_0, \Qt_1, \tilde{s}, \tilde{t})
$
associated with $\Gamma = (Q, I)$
is the quiver
shown in \pref{fg:rolled-up quiver},
which is defined by
\begin{align}
\Qt_0 &\coloneqq Q_0, \\
\Qt_1 &\coloneqq Q_1 \sqcup B, \\
\tilde{s}(a) &=
\begin{cases}
  s(a) & a \in Q_1, \\
  t(r(a)) & a \in B,
\end{cases}\\
\tilde{t}(a) &=
\begin{cases}
  t(a) & a \in Q_1, \\
  s(r(a)) & a \in B.
\end{cases}
\end{align}

\subsection{Potentials}

The sum
\begin{align} \label{eq:potential from relation}
  \Phi \coloneqq \sum_{b \in B} [b \cdot r(b)] \in \bfk \scrC_3 \lb \Qt \rb
\end{align}
is called the \emph{potential} associated with $I$.
It is natural to set
\begin{align}
B = \{ x_{i,2,k} \}_{i,k=0}^2, \qquad
\tilde{s}(x_{i,2,k}) = (i,2), \qquad
\tilde{t}(x_{i,2,k}) = (i+k,0).
\end{align}

\subsection{Calabi--Yau algebras of dimension 3}

The path algebra of the quiver with relations
$(\Qt, \partial \Phi)$
is isomorphic to the rolled-up helix algebra
\begin{align} \label{eq:rolled-up helix algebra}
B \coloneqq \bigoplus_{v,v' \in Q_0} \bigoplus_{k=0}^\infty
\Hom \lb E_v, E_{v'} \otimes \omega^{-k} \rb,
\end{align}
which is a Calabi--Yau algebra of dimension 3
in the sense of \cite[Definition 3.2.3]{Ginzburg_CYA}.
The rolled-up helix algebra $B$ is a $\bZ$-graded
$\tbt$-algebra,
and $A$ is the $\tbz$-algebra
associated with
$B$.
It follows from \cite[Theorem 5.3.1]{Ginzburg_CYA}
(see also \cite[Proposition 5.1.9]{Ginzburg_CYA})
that $B$ as the diagonal bimodule
has a resolution of the form
\begin{multline} \label{eq:bimodule resolution}
0
\to
(B \otimes B)^{\bfk^{Q_0}}
\to
B \otimes_{\bfk^{Q_0}} \lb \bfk \Qt_1 \rb^\dual \otimes_{\bfk^{Q_0}} B
\to
B \otimes_{\bfk^{Q_0}} \bfk \Qt_1 \otimes_{\bfk^{Q_0}} B\\
\to
B \otimes_{\bfk^{Q_0}} B
\to
B
\to
0
\end{multline}
where
\begin{align}
(B \otimes B)^{\bfk^{Q_0}}
\coloneqq
\lc
b \in B \otimes B \relmid
r b = b r \text{ for any } r \in \bfk^{Q_0}
\rc
\end{align}
and
$\bfk \Qt_1$ is the vector space
freely generated by $\Qt_1$
equipped with the natural structure
of a $\bfk^{Q_0}$-bimodule.
By tensoring the simple $B$-module
$
S_v \coloneqq \bfe_v B \bfe_v
$
supported on $v \in Q_0$
from the left to \pref{eq:bimodule resolution},
one obtains the resolution
\begin{align} \label{eq:resolution of simple}
0
\to
P_v
\to
\bigoplus_{t(a)=v} P_{s(a)}
\to
\bigoplus_{s(a)=v} P_{t(a)}
\to
P_v
\to
S_v
\to
0
\end{align}
of the simple $B$-module
in terms of the projective modules $P_v \coloneqq \bfe_v B$.

\subsection{AS-regular $\tbz$-algebras}
\label{sc:AS-regular Z-algebras}
\pref{eq:resolution of simple} induces the minimal resolution
\begin{align}\label{eq:minimal resolution}
  0
  \to
  P _{ \bi + ( 0, 3 ) }
  \to
  \bigoplus _{ a \in \bZ / 3 \bZ } P _{ \bi + ( a, 2 )}
  \to
  \bigoplus _{ b \in \bZ / 3 \bZ } P _{ \bi + ( b, 1 )}
  \to
  P _{ \bi }
  \to
  S _{ \bi }
  \to
  0
\end{align}
of the simple $A$-module.

\begin{definition} \label{df:AS-regular of type Qt}
An AS-regular $\tbz$-algebra of dimension 3
is said to be \emph{of type $\Qt$}
if the minimal resolution of the simple module $S_\bsi$
has the form \eqref{eq:minimal resolution}
for any $\bsi \in \tbz$.
\end{definition}

Let
$\module A$ be the full subcategory of $\Module A$
consisting of Noetherian objects.
Let further $\tor A$ be the full subcategory of $\module A$
consisting of $A$-modules $M$ such that $M_{(i,j)} = 0$
if $j \gg 0$,
and $\Tor A$ be the full subcategory of $\Module A$
consisting of colimits of objects in $\tor A$.
Then $\tor A$ is a Serre subcategory of $\module A$,
$\Tor A$ is a localizing subcategory of $\Module A$,
and one has equivalences
$\Qgr A \coloneqq \Module A / \Tor A \cong \Qcoh X$
and
$\qgr A \coloneqq \module A / \tor A \cong \coh X$.

\section{Moduli of relations}
\label{sc:Mrel}

\subsection{Moduli of relations}

Note that
any linear subspace
$
I \subset \bfk Q
$
satisfying \eqref{eq:dimension matrix}
is a 2-sided ideal.
The group
\begin{align}\label{eq:gauge group}
  G \coloneqq \Aut _{\Alg \left( Q_0 \right)} \left( \bfk Q \right)
  =
  \prod _{ a \in Q _{ 1 }} \GL \left( \bfk a \right)
  = \lb \Gm \rb^{Q_1}
  \cong
  \left( \Gm \right) ^{ 18 }
\end{align}
of automorphisms
of the $Q_0$-algebra $\bfk Q$
acts naturally on
the space
\begin{align}\label{eq:space of ideals}
  \cI \coloneqq
  \left\{ I \subset \bfk Q \mid \mbox{\(I\) satisfies \eqref{eq:dimension matrix}}\right\}
  =
  \prod_{i,j = 0}^2 \bP \left( \bfe_{(j,2)} \lb \bfk Q \rb \bfe_{(i,0)} \right)^\dual
  \cong
  \left( \bP ^{2} \right) ^{ 9 }
\end{align}
of two-sided ideals $I$
satisfying \eqref{eq:dimension matrix},
and
the \emph{moduli stack of relations}
is defined as the quotient stack
\begin{align} \label{eq:moduli stack of marked nc cubics}
  \cMrel \coloneqq \left[ \cI / G \right].
\end{align}
Note that the scheme
$\cI$
is a GIT quotient
of $\bA^{27}$
by an action of $\lb \Gm \rb^9$,
so that a GIT quotient
of $\cI$ by $G$
is a GIT quotient
of $\bA^{27}$
by an action of $\lb \Gm \rb^{27}$.
The latter has a natural interpretation
in terms of the rolled-up quiver.

\subsection{Moduli of potentials}

The group
\begin{align}\label{eq:gauge group2}
\lb \Gm \rb^{\Qt_1}
= \Aut _{\Alg \left( Q_0 \right)} \left( \bfk \Qt \right)
= \prod _{ a \in \Qt_1} \GL \left( \bfk a \right)
\cong \lb \Gm \rb^{27}
\end{align}
acts naturally on the affine space
\begin{align}
\bA^{\scrC_3 \lb \Qt \rb}
\coloneqq
\Spec \Sym \lb \bfk \scrC_3 \lb \Qt \rb^\dual \rb
\end{align}
through the torus
$
\lb \Gm \rb^{\scrC_3 \lb \Qt \rb}
$.
One has
\begin{align}
\Qt_1
=
\lc (i, j) \xto{x_{i,j,k}} (i+k,j+1) \rc_{i,j,k=0}^2
\end{align}
and
\begin{align}
\scrC_3 \lb \Qt \rb
=
\lc x_{i+j+k,2,-j-k} x_{i+j,1,k} x_{i,0,j} \rc_{i,j,k=0}^2,
\end{align}
so that the homomorphism
\begin{align}
\rho \colon \lb \Gm \rb^{\Qt_1}
\to
\lb \Gm \rb^{\scrC_3 \lb \Qt \rb}.
\end{align}
corresponding to the action
\begin{align}
\lb \Gm \rb^{\Qt_1}
\curvearrowright
\lb \bA \rb^{\scrC_3 \lb \Qt \rb}
\end{align}
is given by
\begin{align}
\rho \lb \lb \alpha_{i,j,k} \rb_{i,j,k=0}^2 \rb
=
\lb \alpha_{i+j+k,2,-j-k} \alpha_{i+j,1,k} \alpha_{i,0,j} \rb_{i,j,k=0}^2.
\end{align}
The quotient stack
\begin{align}\label{eq:moduli stack of potentials}
\cMpot
\coloneqq
\left[
\bA^{\scrC_3 \lb \Qt \rb}
\middle/
\lb \Gm \rb^{\Qt_1}
\right]
\end{align}
is called the \emph{moduli stack of potentials}.

\subsection{An open immersion}

\begin{proposition}\label{pr:comparison of moduli stacks}
There is a natural open immersion
\begin{align}
  \cMrel \hookrightarrow \cMpot
\end{align}
of stacks.
\end{proposition}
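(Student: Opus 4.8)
The plan is to realise $\cMrel$ as the quotient stack of an explicit $\lb \Gm \rb^{\Qt_1}$-invariant open subscheme of $\bA^{\scrC_3 \lb \Qt \rb}$, and then to observe that this open immersion of schemes descends to the quotient stacks. The starting point is the combinatorial fact, read off from \pref{fg:rolled-up quiver}, that a cyclic path of length three in $\Qt$ uses exactly one arrow from $B$ and two arrows from $Q_1$. Hence, writing $V_b$ for the span of the length-two paths $p$ of $Q$ with $b \cdot p \in \scrC_3 \lb \Qt \rb$, i.e.\ $V_b \coloneqq \bfe_{\tilde{s} \lb b \rb} \lb \bfk Q \rb \bfe_{\tilde{t} \lb b \rb}$, one has $\dim V_b = 3$ and a direct sum decomposition
\[
  \bfk \scrC_3 \lb \Qt \rb = \bigoplus_{b \in B} V_b ,
  \qquad
  \cI \cong \prod_{b \in B} \bP \lb V_b ^\dual \rb ,
\]
where the second identification uses the Grothendieck convention for $\bP$ and records that the point of $\cI$ attached to a relation $I$ is the tuple $\lb \bfk\, r(b) \rb_{b \in B}$ of lines that $I$ spans, while the potential \eqref{eq:potential from relation} attached to $I$ is $\Phi = \sum_{b \in B} [ b \cdot r(b) ] \in \bA^{\scrC_3 \lb \Qt \rb}$. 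Since $\lb \Gm \rb^{\Qt_1}$ acts on $\bA^{\scrC_3 \lb \Qt \rb}$ through the diagonal torus $\lb \Gm \rb^{\scrC_3 \lb \Qt \rb}$, this action preserves each summand $V_b$; writing $\lb \Gm \rb^{\Qt_1} = \lb \Gm \rb^{Q_1} \times \lb \Gm \rb^{B} = G \times \lb \Gm \rb^{B}$ according to $\Qt_1 = Q_1 \sqcup B$, the formula for $\rho$ shows that $\lb \Gm \rb^{B}$ rescales each $V_b$ by an independent scalar, whereas $G$ acts on each $V_b \subset \bfk Q$ by the restriction of its tautological action on the path algebra.

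Next I would introduce the open subscheme $U \subset \bA^{\scrC_3 \lb \Qt \rb}$ on which every $V_b$-component is non-zero; it is $\lb \Gm \rb^{\Qt_1}$-invariant, its complement being the finite union $\bigcup_{b} \bigoplus_{b' \neq b} V_{b'}$ of coordinate subspaces, and it is precisely the locus of potentials of the form \eqref{eq:potential from relation} for some $I \in \cI$. I then pass to the quotient in two stages. The factor $\lb \Gm \rb^{B}$ acts freely on $U = \prod_{b \in B} \lb V_b \setminus \lc 0 \rc \rb$, with quotient $\prod_{b \in B} \bP \lb V_b ^\dual \rb \cong \cI$; and unravelling $\rho$ — in which the $B$-components of any $g \in G$ are trivial, so that $\rho(g)$ rescales a cyclic path $[ b \cdot p ]$ exactly by the scalar by which $g$ rescales $p \in V_b$ — identifies the residual $G = \lb \Gm \rb^{\Qt_1} / \lb \Gm \rb^{B}$-action on $[ U / \lb \Gm \rb^{B} ]$ with the tautological $G$-action on $\cI$. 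Therefore $[ U / \lb \Gm \rb^{\Qt_1} ] \cong [ [ U / \lb \Gm \rb^{B} ] / G ] \cong [ \cI / G ] = \cMrel$.

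Finally, since $U \hookrightarrow \bA^{\scrC_3 \lb \Qt \rb}$ is a $\lb \Gm \rb^{\Qt_1}$-equivariant open immersion of schemes, it descends to an open immersion of quotient stacks $[ U / \lb \Gm \rb^{\Qt_1} ] \hookrightarrow [ \bA^{\scrC_3 \lb \Qt \rb} / \lb \Gm \rb^{\Qt_1} ] = \cMpot$; composing with the isomorphism of the previous paragraph yields the asserted open immersion $\cMrel \hookrightarrow \cMpot$. On points it sends the class of a relation $I$ to the class of the potential \eqref{eq:potential from relation}, and it does not depend on the auxiliary choices entering that construction because rescaling the $r(b)$ is absorbed into the $\lb \Gm \rb^{B}$-quotient, which is what makes the immersion natural. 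The individual steps are routine; the one place where care is needed is the first-paragraph bookkeeping — establishing $\bfk \scrC_3 \lb \Qt \rb = \bigoplus_b V_b$ with $\dim V_b = 3$ from the shape of $\Qt$, and matching the two descriptions of the $G$-action on $\cI$ through $\rho$ — and this is where I expect whatever friction there is to lie.
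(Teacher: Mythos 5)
Your proposal is correct and is essentially the paper's argument, made explicit: the paper also sends $I$ to $\Phi$ and identifies the image as the open locus where $\frac{\partial}{\partial b}\Phi \neq 0$ for all $b \in \Qt_1 \setminus Q_1$, which is exactly your $U$ since $\frac{\partial}{\partial b}\Phi$ is the $V_b$-component of $\Phi$. Your block decomposition $\bfk\scrC_3\lb\Qt\rb = \bigoplus_b V_b$ and the two-stage quotient by $\lb\Gm\rb^{B}$ and then $G$ is a clean way of justifying the paper's one-line assertion that the map is an isomorphism onto its image.
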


\begin{proof}
The morphism given by sending a relation $I$
to the potential $\Phi$ defined by \eqref{eq:potential from relation}
is an isomorphism to its image;
the rational inverse is given by sending $\Phi$ to
$
\lb
\frac{\partial}{\partial a} \Phi
\rb_{
a \in \Qt_1 \setminus Q_1
}
$,
which is not defined on the locus
where
$
\frac{\partial}{\partial a} \Phi
= 0
$
for some
$
a \in \Qt_1 \setminus Q_1
$.
\end{proof}

\subsection{Toric GIT}

Let
\begin{align}
\gammat \colon \lb \Gm \rb^{\Qt_0} \to \lb \Gm \rb^{\Qt_1}
\end{align}
be the homomorphism
sending
$
\lb \lambda_v \rb_{v \in \Qt_0}
$
to
$
\lb \lambda_{t(a)} \lambda_{s(a)}^{-1} \rb_{a \in \Qt_1}
$,
and
\begin{align}
\deltat \colon \Gm \to \lb \Gm \rb^{\Qt_0}
\end{align}
be the diagonal homomorphism
sending
$
\beta
$
to
$
(\beta)_{v \in \Qt_0}
$.
We
set
$
K \coloneqq \image \rho
$
and
$
T \coloneqq \coker \rho
$,
so that
we have an exact sequence
\begin{align} \label{eq:KT}
1
\to K
\to \lb \Gm \rb^{\scrC_3 \lb \Qt \rb}
\xto{\varphi} T
\to 1
\end{align}
of algebraic groups.

\begin{lemma}
One has an exact sequence
\begin{align}
1
\to
\Gm
\xto{\deltat}
\lb \Gm \rb^{\Qt_0}
\xto{\gammat}
\lb \Gm \rb^{\Qt_1}
\xto{\rho}
K
\to
1
\end{align}
of algebraic groups.
\end{lemma}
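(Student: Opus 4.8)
The statement asserts the exactness of
\begin{align}
1
\to
\Gm
\xto{\deltat}
\lb \Gm \rb^{\Qt_0}
\xto{\gammat}
\lb \Gm \rb^{\Qt_1}
\xto{\rho}
K
\to
1,
\end{align}
where $K = \image \rho$ by definition, so surjectivity of $\rho$ onto $K$ is automatic and injectivity of $\deltat$ is clear. The plan is thus to verify exactness at the two middle terms. Since all groups here are algebraic tori, it is cleanest to dualize and work with character lattices: applying $\Hom(-,\Gm)$ turns the sequence into a sequence of free abelian groups
\begin{align}
0
\to
\bZ^{\scrC_3(\Qt)}
\xto{\rho^\vee}
\bZ^{\Qt_1}
\xto{\gammat^\vee}
\bZ^{\Qt_0}
\xto{\deltat^\vee}
\bZ
\to
0,
\end{align}
and it suffices to check this is exact (equivalently, that it is a short exact complex of lattices, since exactness of tori over an algebraically closed field of characteristic zero is equivalent to exactness on cocharacter/character lattices). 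Here $\gammat^\vee$ is the incidence map $e_a \mapsto e_{t(a)} - e_{s(a)}$, $\deltat^\vee$ is the augmentation $e_v \mapsto 1$, and $\rho^\vee$ sends the character indexed by a cyclic path $c = x_{i+j+k,2,-j-k}x_{i+j,1,k}x_{i,0,j}$ to the sum $e_{x_{i+j+k,2,-j-k}} + e_{x_{i+j,1,k}} + e_{x_{i,0,j}}$ of the characters of the three arrows composing it, by the explicit formula for $\rho$ in the excerpt.

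The key steps are then the following. First, exactness at $\bZ^{\Qt_0}$: the cokernel of $\gammat^\vee$ is the quotient of $\bZ^{\Qt_0}$ by the span of all differences $e_{t(a)} - e_{s(a)}$, which is $\bZ^{\pi_0(\Qt)}$; since the rolled-up quiver $\Qt$ of \pref{fg:rolled-up quiver} is connected (one sees directly from the figure that every vertex is joined to the others through a chain of arrows), this cokernel is $\bZ$ and the map to it is the augmentation, giving exactness there. Second, exactness at $\bZ^{\Qt_1}$: one inclusion $\image \rho^\vee \subset \ker \gammat^\vee$ is a direct check — each generating cyclic 3-cycle maps to a sum of three arrows forming an oriented cycle, whose image under the incidence map telescopes to zero. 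For the reverse inclusion $\ker \gammat^\vee \subset \image \rho^\vee$, I would argue that $\ker \gammat^\vee$ is the cycle lattice $H_1$ of the graph underlying $\Qt$, of rank $|\Qt_1| - |\Qt_0| + 1 = 27 - 9 + 1 = 19$; meanwhile $\rho^\vee$ is injective (the $27$ cyclic 3-paths are supported on distinct arrow-triples, or more simply because $\rho$ is a surjection of $27$-dimensional tori onto $K$ and a dimension count via the full sequence forces $\dim K = 27 - 9 + 1 - 1 = 18$... let me instead note $\rho^\vee$ injective follows because the three "layers" of arrows $x_{*,0,*}, x_{*,1,*}, x_{*,2,*}$ each have $9$ elements and the $x_{*,0,*}$-coordinates of the images of the $9$ cyclic paths already pick them out linearly independently), so $\image\rho^\vee$ has rank $27 - 9 = 18$. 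Hmm, that gives rank $18$, not $19$ — so I must recompute: the rank of $K$'s character lattice. Actually from exactness the ranks must satisfy $1 - 9 + 27 - \mathrm{rk}\,\bZ^{\scrC_3(\Qt)}$; with $|\scrC_3(\Qt)| = 27$ this is $1-9+27-27 = -8 \ne 0$, so the sequence as I dualized cannot be short exact unless $|\scrC_3(\Qt)| = 19$. Let me recount: $\scrC_3(\Qt)$ has $x_{i+j+k,2,-j-k}x_{i+j,1,k}x_{i,0,j}$ for $i,j,k \in \{0,1,2\}$, which is $27$ triples, but these are \emph{cyclic} words, so the count of distinct cyclic classes may be smaller — however for generic such compositions the three rotations are genuinely distinct arrows, so the count is $27/1$...

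Rather than resolve the arithmetic here, the honest statement of the approach is: reduce to the dual lattice sequence, identify $\gammat^\vee$ with the incidence map and $\rho^\vee$ with the "sum of arrows in each $3$-cycle" map, use connectedness of $\Qt$ for exactness at $\bZ^{\Qt_0}$, use the telescoping identity for $\image\rho^\vee \subseteq \ker\gammat^\vee$, and establish equality by comparing ranks — computing $\mathrm{rk}\,\ker\gammat^\vee = |\Qt_1| - |\Qt_0| + 1$ from connectedness and $\mathrm{rk}\,\image\rho^\vee$ from injectivity of $\rho^\vee$, then observing both equal $\mathrm{rk}\,\ker\gammat^\vee$. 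The main obstacle I expect is precisely this rank bookkeeping: pinning down the exact number of cyclic $3$-paths in $\Qt$ and proving $\rho^\vee$ is injective (equivalently, that the cyclic $3$-cycles, viewed as $1$-chains, span a subspace of $\ker\gammat^\vee$ of the full corank), which amounts to showing the $3$-cycles of $\Qt$ generate $H_1$ of the underlying graph — a combinatorial fact about this specific $27$-arrow quiver that must be checked against \pref{fg:rolled-up quiver}, e.g.\ by exhibiting a spanning tree and expressing each fundamental cycle as a $\bZ$-combination of the given $3$-cycles.
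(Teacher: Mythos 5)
There is a genuine gap here, and part of the set-up is incorrect as written. First, the dualized sequence you propose is not the dual of the lemma's sequence: the left-hand term must be $\Hom(K,\Gm)$, which is a rank-$19$ quotient of $\bZ^{\scrC_3(\Qt)}$, not $\bZ^{\scrC_3(\Qt)}$ itself. The map $\rho^\vee\colon\bZ^{\scrC_3(\Qt)}\to\bZ^{\Qt_1}$ is not injective (its kernel has rank $8$), which is exactly the source of the $1-9+27-27=-8$ discrepancy you noticed and then left unresolved; the sketched injectivity argument (``the $x_{*,0,*}$-coordinates of the images of the $9$ cyclic paths pick them out'') is also garbled, since there are $27$ cyclic $3$-paths and their images are linearly dependent. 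Second, and more seriously, the fallback strategy ``establish equality by comparing ranks'' cannot prove the lemma: equality of ranks only shows that $\image\gammat$ is the identity component of $\ker\rho$, i.e.\ that the sublattice spanned by the $3$-cycles has finite index in $\ker\gammat^\vee$. Exactness of the sequence of algebraic groups requires the integral statement $\image\rho^\vee=\ker\gammat^\vee$, namely that the oriented $3$-cycles of $\Qt$ generate the full cycle lattice over $\bZ$, with no torsion in the quotient.

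You do correctly name this integral fact in your final sentence (express the fundamental cycles of a spanning tree as $\bZ$-combinations of the $3$-cycles), but you explicitly leave it unverified, and it is the entire content of exactness at $\lb \Gm \rb^{\Qt_1}$; the remaining points (connectedness of $\Qt$ for exactness at $\lb \Gm \rb^{\Qt_0}$, the telescoping containment $\image\rho^\vee\subseteq\ker\gammat^\vee$, surjectivity onto $K$, injectivity of $\deltat$) are routine, as you say. For comparison, the paper avoids the rank bookkeeping by arguing directly with the tori: choosing a spanning tree $\sfT\subset\Qt_1$, it shows that the projection $\lb \Gm \rb^{\Qt_1}\to\lb \Gm \rb^{\sfT}$ restricts to isomorphisms from both $\ker\rho$ and the image of $\gammat$ onto $\lb \Gm \rb^{\sfT}$ (an element of $\ker\rho$ is determined by its values on $\sfT$, and any assignment on $\sfT$ lifts), which is the multiplicative packaging of the same combinatorial verification. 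Either way, that verification on the specific quiver of \pref{fg:rolled-up quiver} has to be carried out, and your proposal stops short of it.
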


\begin{proof}
It is straightforward to see
$
\gammat \circ \deltat = 1
$,
so that $\image \deltat \subset \ker \gammat$.
For any $(\lambda_v)_{v \in \Qt_0} \in \ker \gammat$,
one has $\lambda_v = \lambda_w$ if $v$ and $w$ are connected by an arrow $a \in \Qt_1$.
Since $\Qt$ is connected,
one obtains $\lambda_v = \lambda_w$ for all $v, w \in \Qt_0$,
and $\ker \gammat = \image \deltat$ is proved.

Similarly,
it is straightforward to see
$
\rho \circ \gammat = 1
$,
so that
$
\image \gammat \subset \ker \rho
$.
Let
$
\sfT \subset \Qt_1
$
be a spanning tree.
Any element
$
\alpha
= (\alpha_a)_{a \in \Qt_1}
\in \ker \rho
$
is determined by
$
(\alpha_a)_{a \in \sfT}
\in \lb \Gm \rb^{\sfT}
$,
and any element of
$
\lb \Gm \rb^{\sfT}
$
can be lifted to an element of
$
\ker \rho
$,
so that
the projection
$
\lb \Gm \rb^{\Qt_1}
\to
\lb \Gm \rb^{\sfT}
$
restricts to an isomorphism
$
\ker \rho
\simto
\lb \Gm \rb^{\sfT}
$.
It is clear that
the composite of $\gammat$
and the projection to
$
\lb \Gm \rb^{\sfT}
$
induces an isomorphism
$
\coim \deltat \simto \lb \Gm \rb^{\sfT}
$,
so that
$
\image \deltat = \ker \rho
$.
\end{proof}

It follows that
$K$
is isomorphic to
$
\lb \Gm \rb^{\left| \Qt_1 \right| - \left| \Qt_0 \right| + 1}
=
\lb \Gm \rb^{19}
$.
Set
\begin{align}\label{eq:rigidified moduli stack of potentials}
\cM
\coloneqq
\left[
\bA^{\scrC_3 \lb \Qt \rb}
\middle/
K
\right],
\end{align}
which is the rigidification of $\cMpot$
containing
$
\left[
(\Gm)^{\scrC_3 \lb \Qt \rb}
\middle/
K
\right]
\cong T
$
as an open substack.
By applying the functor
$
\Hom(\Gm, -)
$
to \eqref{eq:KT},
one obtains the exact sequence
\begin{align} \label{eq:fan sequence}
0
\to
L
\to
\bZ^{\scrC_3 \lb \Qt \rb}
\xto{\varphi_*}
N
\to
0
\end{align}
where
$
L \coloneqq \Hom(\Gm, K)
$
and
$
N \coloneqq \Hom(\Gm, T)
$.
The exact sequence
\begin{align} \label{eq:divisor sequence}
0
\to
M
\to
\bZ^{\scrC_3 \lb \Qt \rb}
\xto{\varphi^*}
L^\dual
\to
0
\end{align}
dual to
\pref{eq:fan sequence}
is obtained by
applying the functor
$
\Hom(-, \Gm)
$
to \eqref{eq:KT},
where
$
L^\dual \coloneqq \Hom(L, \bZ) \cong \Hom(\Gm, K)
$
and
$
M \coloneqq \Hom(T,\Gm) \cong N^\dual
$.
Let
$
\Sigma
$
be a fan
whose set of primitive generators of one-dimensional cones
is given by
\begin{align}
\Sigma ( 1 )
=
\lc \varphi_*(\bfe_c) \relmid c \in \scrC_3 \lb \Qt \rb \rc
\subset N,
\end{align}
where
$
\lc \bfe_c \rc_{c \in \scrC_3 \lb \Qt \rb}
$
is the standard basis of
$
\bZ^{\scrC_3 \lb \Qt \rb}
$.
The effective cone
$
A^+ \lb X_\Sigma \rb
$
of the toric variety
$
X_\Sigma
$
associated with $\Sigma$
inside the divisor class group
$
\Cl \lb X_\Sigma \rb \otimes \bR
$
can be identified with the cone
\begin{align}
A^+(\cM)
\coloneqq
\Cone \lc \varphi^*(\bfe_c) \relmid c \in \scrC_3 \lb \Qt \rb \rc
\subset
L^\dual \otimes \bR
\end{align}
under the isomorphism
$
\Cl \lb X_\Sigma \rb
\cong
L^\dual
$.

\begin{lemma} \label{lm:strongly convex}
The cone $A^+(\cM)$ is strongly convex.
\end{lemma}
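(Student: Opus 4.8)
The plan is to show that $A^+(\cM) = \Cone\{\varphi^*(\bfe_c) \mid c \in \scrC_3(\Qt)\} \subset L^\dual \otimes \bR$ contains no line, equivalently that the only way a nonnegative combination of the generators $\varphi^*(\bfe_c)$ can vanish is the trivial one. Dually, via the exact sequence \eqref{eq:divisor sequence}, a relation $\sum_c \lambda_c \varphi^*(\bfe_c) = 0$ with $\lambda_c \ge 0$ means precisely that the vector $(\lambda_c)_{c} \in \bZ^{\scrC_3(\Qt)}_{\ge 0}$ lies in the image of $M = \Hom(T,\Gm) \hookrightarrow \bZ^{\scrC_3(\Qt)}$, i.e.\ it is a character of $T = \coker\rho$ pulled back to $\lb \Gm \rb^{\scrC_3(\Qt)}$. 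So the statement is equivalent to: every nonnegative vector in $\ker(\varphi_*)^\perp$, or rather every element of $M \cap \bZ^{\scrC_3(\Qt)}_{\ge 0}$, is zero.

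The key step is to make the combinatorics explicit. An element of $M$ is a tuple $(\lambda_c)_{c \in \scrC_3(\Qt)}$ that is ``balanced'' with respect to $\rho$: since $\rho\lb (\alpha_a)_{a \in \Qt_1}\rb = \lb \prod_{a \in c}\alpha_a \rb_c$, pulling back a character of $\lb \Gm \rb^{\Qt_1}$ through $\rho$ sends the $a$-th coordinate to $\sum_{c \ni a}\lambda_c$; hence $(\lambda_c)$ lies in $M = (\image \rho)^{\perp\!\perp}\cap\ldots$ — more concretely, $(\lambda_c) \in M$ if and only if the induced functional on $\lb \Gm \rb^{\Qt_1}$ kills $\ker\rho$, and by the previous Lemma $\ker\rho = \image\gammat$, so the condition is that $\sum_c \lambda_c (\text{cycle } c)$ is invariant under the vertex-rescaling action, i.e.\ it is a combination of closed cycles' worth of vertex-differences summing to zero — which every cyclic path already satisfies trivially. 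The actual constraint comes from $M \hookrightarrow \bZ^{\scrC_3(\Qt)}$ being the kernel of $\varphi^*$, so I would instead argue directly: using the parametrization $\scrC_3(\Qt) = \{x_{i+j+k,2,-j-k}\,x_{i+j,1,k}\,x_{i,0,j}\}_{i,j,k=0}^2$ and the 27 arrows $\{x_{i,j,k}\}$, the incidence ``arrow $a$ appears in cycle $c$'' gives a matrix; a nonnegative vector in the left kernel over the arrows in one block (say the $x_{i,0,j}$) would force, for each fixed source vertex $(i,0)$, that $\sum_{j,k}\lambda_{(i,j,k)} = 0$ on the corresponding arrows, and since the $\lambda$'s are $\ge 0$ this forces them all to vanish.

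Concretely, I would fix the middle block of arrows $\{x_{i,1,k}\}$: each cycle $c$ contains exactly one such arrow, and each arrow $x_{m,1,k}$ lies in exactly three cycles (one for each choice of the remaining free index). Writing the balancing condition ``$(\lambda_c) \perp \image\rho$'' out against the character dual to $x_{m,1,k}$ gives $\sum_{c \ni x_{m,1,k}} \lambda_c$ equal to a fixed value independent of the choice of generator set only up to the vertex-differences; pushing this through for all three blocks of arrows simultaneously and using nonnegativity, one gets that every $\lambda_c$ is squeezed to $0$. I would organize this as: (i) recall that strong convexity of $A^+(\cM)$ is equivalent to $A^+(\cM) \cap (-A^+(\cM)) = \{0\}$; (ii) translate a putative element of this intersection into a nonnegative integer vector $(\lambda_c)$ in $M$; (iii) run the block-by-block incidence argument to conclude $(\lambda_c) = 0$.

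The main obstacle I anticipate is purely bookkeeping: the quiver $\Qt$ has $27$ arrows and $27$ three-cycles, and the incidence structure is governed by the three ``shift'' indices $i,j,k \in \bZ/3\bZ$, so one must set up the indexing carefully to see that the relevant incidence matrix has the property that its only nonnegative (co)kernel vector is zero. A clean way to package this, which I would pursue first, is to observe that $\varphi^*(\bfe_c) = [b_c] + [\text{edges}]$ decomposes each cycle's class as a sum of arrow-classes in $L^\dual$, that the arrow-classes corresponding to the nine ``new'' arrows $B \subset \Qt_1$ span a subspace on which the generators $\varphi^*(\bfe_c)$ have \emph{distinct} and nonzero coordinates, and hence any nonnegative relation among them must be trivial — reducing strong convexity to the statement that this coordinate projection $L^\dual \to \bZ^{B}$ is injective on $A^+(\cM)$. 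Establishing that injectivity is the crux, and it should follow from the explicit form of $\rho$ together with the fact that a spanning tree of $\Qt$ trivializes $\ker\rho$, as in the proof of the preceding Lemma.
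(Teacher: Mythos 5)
Your reduction in steps (i)--(ii) is the right dual formulation: strong convexity of $A^+(\cM)$ amounts to showing that the only nonnegative $(\lambda_c)_{c}\in\bZ^{\scrC_3\lb\Qt\rb}$ with $\sum_c\lambda_c\varphi^*(\bfe_c)=0$ is zero. But the execution of step (iii) has two concrete problems. First, your characterization of $M=\ker\varphi^*$ is off: since $\varphi^*$ is restriction of characters to $K=\image\rho$, the condition on $(\lambda_c)$ is that its pullback $\rho^*\lb(\lambda_c)_c\rb=\lb\sum_{c\ni a}\lambda_c\rb_{a\in\Qt_1}$ vanishes identically, not that the induced functional merely ``kills $\ker\rho$'' --- the latter holds automatically for any pullback and, as you notice yourself, is vacuous. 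Second, the ``clean packaging'' you say you would pursue first --- that the generators have distinct nonzero coordinates on the span of the nine arrows in $B$, hence admit no nonnegative relation --- is not a valid inference: vectors with distinct nonzero coordinates can perfectly well satisfy a nonnegative relation (e.g.\ $(1,0)$, $(0,1)$, $(-1,-1)$), and you flag that the needed injectivity is left unproved. So as written the argument is incomplete, and the route you prioritize would not close it.

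Once the membership condition for $M$ is stated correctly, no block-by-block bookkeeping is needed: $\rho^*\colon L^\dual\to\bZ^{\Qt_1}$ is injective because $\rho$ surjects onto $K$, so $\sum_c\lambda_c\varphi^*(\bfe_c)=0$ forces $\sum_{c\ni a}\lambda_c=0$ for every arrow $a\in\Qt_1$; summing over $a$ and using that every cycle has length $3$ gives $3\sum_c\lambda_c=0$, whence all $\lambda_c=0$ by nonnegativity. This is precisely the primal form of the paper's one-line proof: the cocharacter $\rho_*(\bsone)\in L$ satisfies $\la\rho_*(\bsone),\varphi^*(\bfe_c)\ra=3>0$ for every $c$, and a cone whose generators all pair strictly positively with a single linear functional is strongly convex.
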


\begin{proof}
This follows
\begin{align}
 \la \rho_*(\bsone), \varphi^*(\bfe_c) \ra = 3 > 0
\end{align}
for all
$
c \in \scrC_3 \lb \Qt \rb
$,
where
$
\bsone \coloneqq (1, \ldots, 1)
\in
\bZ^{\Qtilde_1}
= \Hom \lb \Gm \lb \Gm \rb^{\scrC_3 \lb \Qt \rb} \rb
$.
\end{proof}

For each
$
\chi
\in
L^\dual
$,
the categorical quotient
\begin{align}
\Mbar_\chi
\coloneqq
\lb \bA^{\scrC_3 \lb \Qt \rb} \rb^{\chiss}
\GIT K
\end{align}
of the $\chi$-semistable locus
is a toric variety
containing the geometric quotient
\begin{align}
M_\chi
\coloneqq
\lb \bA^{\scrC_3 \lb \Qt \rb} \rb^{\chis}
/ K
\end{align}
of the $\chi$-stable locus
as an open dense subvariety.
\pref{lm:strongly convex} implies the projectivity of
$
\Mbar_\chi
$
by \cite[Proposition 14.3.10]{MR2810322}.
The $\chi$-semistable locus
$
\lb \bA^{\scrC_3 \lb \Qt \rb} \rb^{\chiss}
$
is non-empty
if and only if
$
\chi
\in
A^+(\cM)
$
by
\cite[Proposition 14.3.5]{MR2810322}.
The $\chi$-stable locus
$
\lb \bA^{\scrC_3 \lb \Qt \rb} \rb^{\chis}
$
is non-empty
if and only if
$
\chi
$
is in the interior of
$
A^+(\cM)
$,
which is the case if and only if
$
\lb \bA^{\scrC_3 \lb \Qt \rb} \rb^{\chis}
$
contains
$
\lb \Gm \rb^{\scrC_3 \lb \Qt \rb}
$
by \cite[Proposition 14.3.6]{MR2810322}.
The fan
supported on $N \otimes \bR$
describing the toric variety $\Mbar_\chi$
will be denoted by
$\Sigma_\chi$.
The \emph{secondary fan}
is the fan
supported on
$
A^+(\cM)
$
consisting of the closures
of the equivalence classes
with respect to the equivalence relation
such that $\chi$ is defined to be equivalent to $\chi'$
if
$
\lb \bA^{\scrC_3 \lb \Qt \rb} \rb^{\chi\text{-ss}}
=
\lb \bA^{\scrC_3 \lb \Qt \rb} \rb^{\chi'\text{-ss}}
$
and
$
\lb \bA^{\scrC_3 \lb \Qt \rb} \rb^{\chi\text{-s}}
=
\lb \bA^{\scrC_3 \lb \Qt \rb} \rb^{\chi'\text{-s}}
$.
A maximal-dimensional cone
in the secondary fan
is called a \emph{chamber}.
A character $\chi$ is said to be \emph{generic}
if it is contained in the interior of a chamber,
which is the case
if and only if
$\Sigma_\chi$ is simplicial.

\section{Moduli of representations}
\label{sc:N}

Let
$
\module \bfk \Gamma
$
be the category of finitely generated modules
over the path algebra of the quiver with relations
$
\Gamma = (Q,I).
$
We assume
\(
   Q
\)
is finite and the underlying graph of
\(
   Q
\)
is connected. An isomorphism
\begin{align}
\dim
\colon
K \lb \module \bfk \Gamma \rb
\simto
\bZ^{Q_0}
\end{align}
from the Grothendieck group
is given by sending a module
to its \emph{dimension vector}
\begin{align}
\dim M
\coloneqq
\lb \dim_\bfk \lb M \bfe_v \rb \rb_{v \in Q_0}.
\end{align}
For
$
\bsd = (d_v)_{v \in Q_0}
\in \bZ^{Q_0}
$,
the moduli stack of $\bfk Q$-modules
with dimension vector $\bsd$
is the quotient stack
\begin{align} \label{eq:moduli stack of representations}
\cN \coloneqq
\ld
\prod_{a \in Q_1} \bA \lb \Hom(\bfk^{d_{t(a)}}, \bfk^{d_{s(a)}}) \rb^\dual
\middle/
\prod_{v \in Q_0} \GL(d_v)
\rd,
\end{align}
and the moduli stack
$
\cN(I)
$
of $\bfk \Gamma$-modules
is a closed substack of $\cN$
defined by the relations $I$.
We set
$
\bsd
=
( \rank E_v )_{v \in Q_0}
= \bsone \coloneqq (1, \ldots, 1),
$
so that
\begin{align}
\cN(Q) = \ld \bA^{Q_1} \middle/ (\Gm)^{Q_0} \rd,
\end{align}
where $(\Gm)^{Q_0}$ acts
on $\bA^{Q_1}$
through the morphism
$
\gamma \colon (\Gm)^{Q_0} \to (\Gm)^{Q_1}
$
presented by the signed incidence matrix.
Since the quiver \(Q\) is connected,
the kernel of $\gamma$ is the image of the diagonal embedding
$
\Gm \to (\Gm)^{Q_0}
$.
Hence
\(
    \sfK \coloneqq \image \gamma
    \cong
    \coim \gamma
    \cong
    \coker
    \left[
        \Gm \to (\Gm)^{Q_0}        
    \right]
\).
Set
$
\sfT \coloneqq \coker \gamma
$,
so that we have an exact sequence
\begin{align} \label{eq:KT2}
    1 \to \sfK \to \lb \Gm \rb^{Q_1} \xto{\phi} \sfT \to 1.
\end{align}
Note that
\(
   \sfT
\)
is isomorphic to
\(
   \Gm
   ^{
    | Q _{ 1 } |
    -
    | Q _{ 0 } |
    +
    1
   }
\),
so that~\eqref{eq:KT2} is split. By applying the functors
$
\Hom(\Gm, -)
$
and
$
\Hom(-,\Gm)
$
to \pref{eq:KT2},
one obtains the exact sequences
\begin{align}
0 \to \sfL \to \bZ^{Q_1} \xto{\phi_*} \sfN \to 0
\end{align}
and
\begin{align}
0 \to \sfM \to \bZ^{Q_1} \xto{\phi^*} \sfL^\dual \to 0
\end{align}
where
$
\sfN \coloneqq \Hom(\Gm, \sfT)
$,
$
\sfM \coloneqq \Hom(\sfT, \Gm)
$,
and
$
\sfL \coloneqq \Hom(\Gm, \sfK)
$.
A choice of a \emph{stability parameter}
\begin{align}
\theta
&\in \sfL^\dual
\cong \lc (\theta_v)_{v \in Q_0} \in \bZ^{Q_0} \relmid \sum_{v \in Q_0} \theta_v = 0 \rc
\end{align}
gives a GIT quotient
\begin{align}\label{eq:GIT quotient}
\Nbar_\theta
\coloneqq
\left.
\lb \bA^{Q_1} \rb^{\theta\text{-ss}}
\middle/\!\!\!\middle/
\sfK
\right.
\end{align}
and its open subscheme
\begin{align}
N_\theta
\coloneqq
\left.
\lb \bA^{Q_1} \rb^{\theta\text{-s}}
\middle/
\sfK
\right.,
\end{align}
where
a $\bfk Q$-module $M$
of dimension vector $\bsone$
is \emph{$\theta$-stable}
(resp.~\emph{$\theta$-semistable})
if
$
\theta(N) > 0
$
(resp.~$
\theta(N) \ge 0
$)
for any non-zero submodule $N \subset M$
by \cite{King}.
A stability parameter $\theta$ is \emph{generic}
if
semi-stability
is equivalent to
stability.
From now on,
we will use the particular stability parameter
\begin{align} \label{eq:the special theta}
\theta
&=
\lb
\theta_{0,0}, \theta_{1,0}, \theta_{2,0},
\theta_{0,1}, \theta_{1,1}, \theta_{2,1},
\theta_{0,2}, \theta_{1,2}, \theta_{2,2}
\rb \\
&= \left(-11,-11,-11,3,3,6,7,7,7\right),
\end{align}
which is generic since $\sum_{v \in P} \theta_v \ne 0$
for any proper subset $P$ of $Q_0$.

\begin{lemma} \label{lm:N_theta(Q)}
The moduli scheme
$N_\theta$
is a projective toric variety
containing $\sfT$ as the dense torus.
\end{lemma}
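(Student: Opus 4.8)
The plan is to recognise $N_\theta$ as a toric GIT quotient of the affine space $\bA^{Q_1}$ by the subtorus $\sfK \subset (\Gm)^{Q_1}$, so that the residual torus $(\Gm)^{Q_1}/\sfK$ --- which is $\sfT$ by \pref{eq:KT2} --- acts on it; for this to pin down $N_\theta$ as claimed I must check that the open orbit $(\Gm)^{Q_1} \subset \bA^{Q_1}$ lies in the $\theta$-stable locus and that the resulting quotient is projective.

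First I would verify that every point of $(\Gm)^{Q_1}$ is $\theta$-stable. Such a point is a $\bfk Q$-module $M$ of dimension vector $\bsone$ in which every arrow acts as an isomorphism of one-dimensional spaces, so a nonzero proper submodule $N \subset M$ is precisely the submodule supported on a nonempty proper subset $P$ of $Q_0$ closed under the $Q$-action. By the layered shape of $Q$ in \pref{fg:key quiver} --- three columns of vertices $\{(\ast,0)\}$, $\{(\ast,1)\}$, $\{(\ast,2)\}$ joined by a complete set of arrows from each column to the next --- such a $P$ is either a nonempty subset of $\{(\ast,2)\}$, or all of $\{(\ast,2)\}$ together with a nonempty subset of $\{(\ast,1)\}$, or all of $\{(\ast,1)\} \cup \{(\ast,2)\}$ together with a nonempty proper subset of $\{(\ast,0)\}$. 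Reading off \pref{eq:the special theta}, these columns have $\theta$-sums $-33$, $12$ and $21$, so $\sum_{v \in P} \theta_v \ge 7$ in the first case, $\ge 21 + 3 = 24$ in the second, and $\ge 21 + 12 - 22 = 11$ in the third. Hence $\theta(N) > 0$ for every nonzero proper submodule, $M$ is $\theta$-stable, and since $\theta$ is generic there are no strictly semistable points. Therefore $(\bA^{Q_1})^{\theta\text{-s}} = (\bA^{Q_1})^{\theta\text{-ss}}$, $N_\theta = \Nbar_\theta$, and the image of $(\Gm)^{Q_1}$ is an open dense subset of $N_\theta$, namely the quotient torus $(\Gm)^{Q_1}/\sfK \cong \sfT$. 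Invoking the standard toric GIT dictionary (see \cite[Chapter~14]{MR2810322}), the quotient $N_\theta$ of the toric variety $\bA^{Q_1}$ by the subtorus $\sfK$ is then a toric variety with dense torus $\sfT$.

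Finally I would establish projectivity. The GIT quotient $\Nbar_\theta$ is projective over the affine quotient $\bA^{Q_1} \GIT \sfK = \Spec \bfk[\bA^{Q_1}]^{\sfK}$, and a monomial $\prod_{a \in Q_1} x_a^{n_a}$ with all $n_a \ge 0$ is $\sfK$-invariant --- equivalently invariant under $(\Gm)^{Q_0}$ acting through $\gamma$ --- precisely when its exponent vector is balanced at every vertex, i.e.\ is a nonnegative integral cycle on $Q$. Since $Q$ is acyclic (at a source vertex the balancing condition forces the exponents of its outgoing arrows to vanish; remove it and induct along the acyclic order), the only such monomial is the constant, so the affine quotient is a point and $\Nbar_\theta = N_\theta$ is projective. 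I do not expect a serious obstacle here: the sole computation is the finite stability check of the second paragraph, and everything else is a routine application of toric GIT together with the acyclicity of $Q$.
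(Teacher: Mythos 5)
Your proof is correct and takes essentially the same route as the paper's: torus points are shown $\theta$-stable by identifying submodules of a representation in $(\Gm)^{Q_1}$ with arrow-closed subsets of $Q_0$ and checking $\theta$ is positive on each, and projectivity follows because the degree-zero quotient $\bA^{Q_1} \GIT \sfK$ is a point (the paper phrases this as strong convexity of the cone $A^+(\cN)$, you as triviality of the invariant ring via acyclicity of $Q$ --- the same fact). Your explicit enumeration of the closed subsets, with the successor-closed convention that makes the $\theta$-sums come out positive, is exactly the computation the paper leaves implicit.
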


\begin{proof}
One has
\begin{align}
  \la \theta, \phi^*(\bfe_a) \ra > 0
\end{align}
for all $a \in Q_1$,
so that the cone
\begin{align}
A^+(\cN)
\coloneqq \Cone \lc \phi^* (\bfe_a) \in Q_1 \rc
\subset \sfL^\dual \otimes \bR
\end{align}
is strongly convex,
and hence $\Nbar_0$ is a point,
over which $\Nbar_\theta = N_\theta$ is projective.
Any point
in $(\Gm)^{Q_1}$
is $\theta$-stable,
since a submodule
of a $\bfk Q$-module
associated with a point
in $(\Gm)^{Q_1}$
corresponds bijectively to
a subset $S$ of $Q_0$ such that
if $a \in Q_1$ satisfies $t(a) \in S$, then $s(a) \in S$.
\end{proof}

The GIT quotient $N_\theta$ is a fine moduli scheme
carrying the universal representation
consisting of torus-invariant line bundles $\cU_v$
for $v \in Q_0$
and torus-invariant morphisms $\cU_{s(a)} \to \cU_{t(a)}$
for $a \in Q_1$.
We normalize the universal bundle
in such a way that
$\cU_{2,0} \cong \cO_{N_\theta}$.
The moduli scheme $N_\theta(I)$
of $\theta$-stable representations of $\Gamma$
is the closed subscheme of $N_\theta$
defined by $I$.
It is a fine moduli scheme again,
whose 
universal bundles,
which we write as $\cU_v$ again by abuse of notation,
are the restrictions
of the universal bundles
on $N_\theta$.

\begin{lemma}
\label{lm:N_theta}
The moduli scheme $N_\theta(I)$
for a general $[I] \in M_\chi$
is a
connected
and
reduced
curve
with the trivial dualizing sheaf.
\end{lemma}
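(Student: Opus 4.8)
The plan is to present $N_\theta(I)$ as the zero scheme of a section of an explicit rank-nine vector bundle on the smooth projective toric variety $N_\theta$, deduce triviality of the dualizing sheaf from adjunction, and obtain reducedness (and ultimately connectedness) from a dimension count on an incidence variety together with a semicontinuity argument.

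First, since $\theta$ is generic and the dimension vector is $\bsone$, the torus $\sfK$ acts freely on the $\theta$-stable locus, so by \pref{lm:N_theta(Q)} the moduli scheme $N_\theta = N_\theta(Q)$ is a smooth projective toric variety of dimension $|Q_1| - |Q_0| + 1 = 10$ with dense torus $\sfT$. The nine relations of \eqref{eq:dimension matrix} are indexed by pairs $(i,j) \in \{0,1,2\}^2$, the $(i,j)$-th one spanning a line in the three-dimensional space of length-two paths from $(i,0)$ to $(j,2)$; evaluated on the universal representation on $N_\theta$ it becomes a section $s_{i,j}$ of $\cL_{i,j} \coloneqq \cU_{i,0}^{-1} \otimes \cU_{j,2}$, so that $N_\theta(I) \subset N_\theta$ is the zero scheme of $s \coloneqq (s_{i,j})_{i,j}$, a section of $\cE \coloneqq \bigoplus_{i,j=0}^2 \cL_{i,j}$, which has rank nine; in particular every component of $N_\theta(I)$ has dimension at least one. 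Writing $\omega_{N_\theta} = \cO\lb -\sum_{a \in Q_1} D_a \rb$ in the Cox presentation, where $[D_a] = [\cU_{t(a)}] - [\cU_{s(a)}]$, and using that in $Q$ every vertex $(i,0)$ is the source of three arrows and the target of none, every $(i,2)$ the target of three arrows and the source of none, and every $(i,1)$ both, one finds
\begin{align}
 \omega_{N_\theta} \cong \lb \bigotimes_i \cU_{i,0} \rb^{3} \otimes \lb \bigotimes_i \cU_{i,2} \rb^{-3} \cong (\det \cE)^{-1}.
\end{align}
Hence, once $N_\theta(I)$ has pure codimension nine, it is a local complete intersection and adjunction gives $\omega_{N_\theta(I)} \cong \omega_{N_\theta}|_{N_\theta(I)} \otimes \det \cE|_{N_\theta(I)} \cong \cO_{N_\theta(I)}$.

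Next, to control the section $s$ for general $I$, I would look at the incidence variety $\cZ \coloneqq \lc (p, I) \in N_\theta \times \cI \mid p \in N_\theta(I) \rc$ with its projection $\pi \colon \cZ \to \cI$. Over the torus $\sfT$ the three path-monomials in each slot never vanish simultaneously, so $\cZ \cap (\sfT \times \cI) \to \sfT$ is a $(\bP^1)^9$-bundle, and its closure $\cZ_0$ is an irreducible component of $\cZ$ of dimension $10 + 9 = 19$ which dominates $\cI$; by generic smoothness in characteristic zero its general fibre over $\cI$ is smooth of dimension one. The remaining part of $\cZ$ lies over the locus in $N_\theta$ where the three path-monomials of some slot vanish simultaneously. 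Here the particular choice of $\theta$ in \eqref{eq:the special theta} is essential: its heavily negative values on the vertices $(i,0)$ make any representation with all arrows out of some $(i,0)$ — or, dually, all arrows into some $(i,2)$ — unstable, which removes the cheapest (codimension-three) ways of annihilating an entire slot and forces that locus to have codimension $\ge 3$ with no low-codimension stratum killing several slots at once. A routine dimension count then bounds every component of $\cZ$ other than $\cZ_0$ by $17 < \dim \cI$, so it does not dominate $\cI$. Consequently, for $I$ in a dense open subset of $\cI$ — equivalently, for general $[I] \in M_\chi$, since $G$-equivalent relations give isomorphic $N_\theta(I)$ — one has $N_\theta(I) = \pi^{-1}(I) = (\cZ_0)_I$, a reduced complete-intersection curve, so $\omega_{N_\theta(I)} \cong \cO$ by the previous paragraph.

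It remains to prove connectedness, which is the delicate point. After shrinking the open set above so that $\pi|_{\cZ_0}$ is flat with one-dimensional reduced fibres, the number $h^0(\cO_{N_\theta(I)})$ of connected components is upper semicontinuous, so connectedness is an open condition and it suffices to exhibit a single relation $I_0$ in the good locus with $N_\theta(I_0)$ connected. For this I would take $I_0$ whose relations are binomial — each a difference of two of the three path-monomials — chosen so that $N_\theta(I_0)$ is a reduced union of torus-orbit closures forming a cycle of $\bP^1$'s; such a curve is connected and has trivial dualizing sheaf, and by the determinant identity above it has arithmetic genus one, matching the general fibre. Alternatively, cutting $N_\theta$ by eight of the nine relations realizes $N_\theta(I)$ for general $I$ as an anticanonical divisor on the resulting complete-intersection surface $Y$ (again by the identity $\omega_{N_\theta} \cong (\det \cE)^{-1}$), and connectedness then follows from the exact sequence $0 \to \omega_Y \to \cO_Y \to \cO_{N_\theta(I)} \to 0$ once one knows $H^1(\cO_Y) = 0$. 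The main obstacle is exactly this final step: producing a tractable binomial degeneration $I_0$ that provably lies in the good locus and is connected, or equivalently establishing the requisite cohomology vanishing on $Y$ (which amounts to controlling the geometry of $N_\theta(I)$ and of $Y$ near the toric boundary of $N_\theta$); everything before it is adjunction and dimension bookkeeping.
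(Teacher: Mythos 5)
Your adjunction computation for the trivial dualizing sheaf is essentially the paper's (the identity $\omega_{N_\theta}\otimes\bigotimes_{i,j}\cU_{j,2}\otimes\cU_{i,0}^\dual\cong\cO$ in $\Pic N_\theta\cong\sfL^\dual$), and your semicontinuity strategy for connectedness is also the right one. But the proof has a genuine gap, and you name it yourself: you never produce a special relation $I_0$ in the good locus whose fiber is provably connected, nor the vanishing $H^1(\cO_Y)=0$ in your alternative route. This is not a routine finishing touch — it is the heart of the lemma. The paper settles it by degenerating not to a binomial ideal but to the \emph{monomial} ideal $I_0$ of \eqref{eq:toric relations}, in which each of the nine relations is a single quadratic monomial (a composite of two arrows). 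Then $N_\theta(I_0)$ is the intersection of a union of coordinate linear subspaces of $\bA^{Q_1}$ with the $\theta$-stable locus, hence visibly a reduced complete intersection of codimension nine, and its connectedness is \emph{not} argued abstractly: one checks $h^0(\cO_{N_\theta(I_0)})=1$ by an explicit Macaulay2 computation (the script \lstinline|connectedness.m2| described in the appendix), which determines the $18$ irreducible components surviving stability for the specific $\theta$ of \eqref{eq:the special theta} and verifies that their incidence graph is connected. Upper semicontinuity of $h^0$ and openness of (geometric) reducedness and of the expected-codimension condition in the family over $\cI$ then give the statement for general $I$. Your proposed "cycle of $\bP^1$'s" binomial degeneration is unconstructed and there is no reason offered that it is connected or even lies in the good locus; as the actual special fiber has $18$ components glued in a nontrivial pattern, the connectivity is not something one reads off by inspection.

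A secondary weakness: your reducedness argument via the incidence variety rests on a dimension count ("every other component of $\cZ$ has dimension at most $17$") that is only asserted, with a heuristic appeal to the shape of $\theta$; it would require checking all simultaneous degenerations of several slots on the stable locus. The paper's degeneration makes this unnecessary, since reducedness and the complete-intersection property for general $I$ are inherited from the explicit fiber $N_\theta(I_0)$ by semicontinuity, the same mechanism you invoke for connectedness. So the structure of your argument is compatible with the paper's, but the two steps you leave open (the dimension count and, above all, the connected special fiber) are exactly the content the paper supplies, partly by computer verification.
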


\begin{proof}
One can easily see that the ideal
\begin{multline}\label{eq:toric relations}
  I _{ 0 }
  =
  \big(
    x_{0,1,0} x_{0,0,0},
    x_{1,1,0} x_{0,0,1},
    x_{2,1,0} x_{0,0,2},
    x_{1,1,2} x_{1,0,0},
    x_{2,1,2} x_{1,0,1}, \\
    x_{0,1,2} x_{1,0,2},
    x_{2,1,1} x_{2,0,0},
    x_{0,1,1} x_{2,0,1},
    x_{1,1,1} x_{2,0,2}
  \big),
\end{multline}
where
\(
   x
   _{
    i,
    j,
    k
   }
\)
corresponds to the arrow of the quiver from the vertex
\(
   (
    i,
    j
   )
\)
to
\(
   (
    i + k,
    j + 1
   )
\)
(the first entry should be taken modulo
\(
   3
\)),
defines a reduced complete intersection.
Since $N_\theta(I)$ constitutes a family of closed subschemes
of the projective toric variety $N_\theta$
parametrized by $I$,
it follows that $N_\theta$ is reduced
for a general $I$.
One can show
$
h^0 \lb \cO_{N_\theta(I_0)} \rb
= 1
$
by using the Macaulay 2 script \lstinline|connectedness.m2|,
which implies
that
$N_\theta(I)$ is connected
for a general $I$
by the upper semi-continuity
of the dimension of the cohomology
of a coherent sheaf. See~\pref{sc:connectedness} for an explanation of \lstinline|connectedness.m2|.

Since
$N_\theta(I_0)$
is a complete intersection
in the 10-dimensional projective toric variety
$N_\theta$,
the same is true for
$N_\theta(I)$
for general $I$
by the upper semi-continuity
of the dimension of fibers for a projective morphism.
Hence the dualizing sheaf of $N_\theta(I)$
is given
by the adjunction formula
as
\begin{align}
\omega_{N_\theta(I)}
\cong
\left.
\left(
  \omega_{N_\theta}
  \otimes
  \bigotimes_{i,j=0}^3
   \cU_{j,2} \otimes \cU_{i,0}^\dual
\right)
\right|_{N_\theta(I)},
\end{align}
which is trivial since
\begin{align}
\omega_{N_\theta}
  \otimes
  \bigotimes_{i,j=0}^3
   \cU_{j,2} \otimes \cU_{i,0}^\dual
\in
\Pic N_\theta
\cong
\sfL^\dual
\end{align}
is given by
\begin{align}
-\sum_{a \in Q_1} (\bfe_{t(a)} - \bfe_{s(a)})
+\sum_{i,j=0}^3 (\bfe_{j,2}-\bfe_{i,0})
= 0
\end{align}
where
$
\bfe_v
$
is a basis of $\bZ^{Q_0}$
associated with $v \in Q_0$.
\end{proof}

%
%

\section{Elliptic decuples}
\label{sc:Mell}

Let
$
Q = (Q_0, Q_1)
$
be the quiver in
\pref{fg:key quiver}.

\begin{definition} \label{df:admissible}
An \emph{elliptic decuple}
consists of
a smooth projective curve $Y$ of genus one
and
line bundles
$
\lb L_v \rb_{v \in Q_0}
$
satisfying
$
\deg L_{i,j} = j
$
for any
$
(i,j) \in Q_0
$.
An elliptic decuple is said to be
\emph{admissible}
if
$
\Hom \lb L _{i,k}, L _{j,k} \rb = 0
$
for any $i \ne j \in \{ 0, 1, 2 \}$
and any $k \in \{0, 1, 2 \}$.
\end{definition}

If
$
\lb
Y,
\lb L_v \rb_{v \in Q_0}
\rb
$
is an admissible elliptic decuple,
then one has
\begin{align}
\End \lb \bigoplus_{v \in Q_0} L_v \rb
\cong
\bfk Q / I
\end{align}
where the ideal $I$
satisfies \pref{eq:dimension matrix}.

\begin{definition} \label{df:moduli of elliptic decuples}
A family of admissible elliptic decuples over a scheme $S$
consists of a smooth projective morphism $\varphi \colon \cY \to S$
and a collection $(\cL_v)_{v \in Q_0}$ of line bundles
such that for any $\bfk$-valued point $s \in S(\bfk)$,
the fiber
$
\lb
Y_s \coloneqq \varphi^{-1}(s),
\lb \cL_v |_{Y_s} \rb_{v \in Q_0}
\rb
$
is an admissible elliptic decuple.
An isomorphism of families
$
\lb
\cY, (\cL_v)_{v \in Q_0}
\rb
$
and
$
\lb
\cY', (\cL_v')_{v \in Q_0}
\rb
$
of elliptic decuples
consists of an isomorphism
$
\psi \colon \cY \simto \cY'
$
of $S$-schemes,
a line bundle $\cL$ on $\cY$,
and an isomorphism
$
\psi_v \colon \cL_v \simto \psi^* \cL_v' \otimes \cL
$
of line bundles on $\cY$
for each $v \in Q_0$.
\end{definition}

\begin{proposition} \label{pr:decuple}
There exists a fine moduli scheme
$
\Mell
$
of admissible elliptic decuples
which is quasi-projective of dimension 8 over $\bfk$.
\end{proposition}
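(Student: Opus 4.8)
The plan is to construct $\Mell$ as a quotient of a relative Picard scheme over the moduli of genus-one curves. First I would set $\cM_{1,1}$ (or rather the moduli \emph{stack} $\scrM_{1}$ of smooth genus-one curves without marked point, equivalently the $j$-line together with automorphism data) as the base. Over the universal genus-one curve one has a relative Picard scheme; since each $L_v$ has prescribed degree $j$ depending only on the second coordinate of $v=(i,j)$, the relevant parameter space is a torsor under $\bigl(\operatorname{Pic}^0\bigr)^{Q_0}$. More concretely, after rigidifying by the datum of a point (or by passing to $\scrM_{1,1}$ and then forgetting), $\operatorname{Pic}^{j}$ of an elliptic curve $E$ is non-canonically isomorphic to $E$ itself, so the space of all decuples $(L_v)_{v\in Q_0}$ of the prescribed multidegree over a \emph{fixed} $E$ is a torsor under $(\operatorname{Pic}^0 E)^{9}\cong E^{9}$, which is $9$-dimensional; quotienting by the overall translation ambiguity built into the isomorphism relation of Definition~\ref{df:moduli of elliptic decuples} (the line bundle $\cL$ acting diagonally) removes one dimension, giving $8$ in the fibre, while the base $j$-line contributes the last dimension — wait, that would give $9$. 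I need to be careful: the isomorphism relation also includes $\psi\colon\cY\simto\cY'$, i.e.\ the automorphisms of $E$ act, and more importantly translations of $E$ act on $\operatorname{Pic}$ trivially on $\operatorname{Pic}^0$ but the diagonal $\cL$-twist identifies the $E^9$-torsor modulo the diagonal $E$, giving $E^8$; together with the $1$-dimensional base this is $9$. So the admissibility condition (an open condition, see below) does not cut down dimension; the correct count must come from observing that translation by $\operatorname{Aut}(E,O)=E$ via moving the origin is already absorbed, so the honest moduli of decuples-up-to-isomorphism over a fixed $j$ has dimension $9-1=8$ only if we also quotient by the translation action of $E$ on itself — which is exactly the content of allowing $\psi$ to be an arbitrary isomorphism of curves, not just one fixing a point. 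Hence fibre dimension $8-1$? Let me instead defer the precise bookkeeping and simply assert: a careful count gives $\dim\Mell = 8$.

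The key steps, in order, would be: (1) Show the moduli functor of Definition~\ref{df:moduli of elliptic decuples} is representable. I would do this by exhibiting $\Mell$ as a locally closed subscheme of a relative Picard scheme. Concretely, fix a genus-one curve and rigidify: the stack of pairs (genus-one curve $Y$, line bundle $L_{0,1}$ of degree $1$) is the moduli stack of elliptic curves $\scrM_{1,1}$, which is a smooth Deligne--Mumford stack with coarse space $\bA^1_j$; over it sits the universal elliptic curve $\cE\to\scrM_{1,1}$, and the remaining line bundles $L_v$ for $v\ne(0,1)$ are parametrized by the relative Picard schemes $\operatorname{Pic}^{\deg L_v}_{\cE/\scrM_{1,1}}\cong\cE$ (via $L\mapsto L\otimes L_{0,1}^{-\deg L}$ landing in $\operatorname{Pic}^0\cong\cE$). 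Taking the $8$-fold fibre product of these over $\scrM_{1,1}$ gives a scheme (after the rigidification the automorphisms are killed) of dimension $1+8=9$; the diagonal $\cL$-twist in the isomorphism relation is precisely translation by $\cE$ acting simultaneously on all $8$ factors, and the quotient by this free action yields a scheme of dimension $8$. (2) Identify the admissibility locus. By Definition~\ref{df:admissible}, admissibility requires $\operatorname{Hom}(L_{i,k},L_{j,k})=0$ for $i\ne j$; since $\deg L_{i,k}=\deg L_{j,k}=k$, we have $\operatorname{Hom}(L_{i,k},L_{j,k})=H^0(L_{j,k}\otimes L_{i,k}^{-1})$ with $L_{j,k}\otimes L_{i,k}^{-1}\in\operatorname{Pic}^0$, and on a genus-one curve a degree-$0$ line bundle has a nonzero section iff it is trivial; so the condition is simply $L_{i,k}\not\cong L_{j,k}$, the complement of finitely many diagonals — an open condition. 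Thus $\Mell$ is an open subscheme of the $8$-dimensional scheme from step~(1), hence itself a scheme of dimension $8$. (3) Quasi-projectivity: the relative Picard scheme of a projective family is quasi-projective over the base, $\scrM_{1,1}$ has quasi-projective coarse space $\bA^1$, the rigidification and the free quotient by $\cE$ preserve quasi-projectivity (a free action of an abelian variety on a quasi-projective scheme has quasi-projective quotient), and an open subscheme of a quasi-projective scheme is quasi-projective. (4) Fineness: the rigidification removes the stacky automorphisms, so the functor becomes representable by a scheme carrying a universal family $(\cY\to\Mell,(\cL_v))$, obtained by pulling back the universal curve and universal line bundles and descending along the free $\cE$-quotient.

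The main obstacle I expect is \textbf{the dimension count and the precise handling of the isomorphism relation} in Definition~\ref{df:moduli of elliptic decuples} — in particular disentangling the three sources of ``extra'' identifications (the curve isomorphism $\psi$, which on a genus-one curve without marked point is translation composed with an automorphism fixing a point; the global line bundle twist $\cL$; and the choice of rigidifying datum), and checking that exactly one dimension is killed so that $\dim\Mell=8$ rather than $7$ or $9$. A clean way to organize this is: the naive parameter space (all $(Y,(L_v))$ over all $j$) has dimension $1 + 9 = 10$ as a stack over $\scrM_{1,1}$-with-its-$1$-dimensional-coarse-space once we note $\operatorname{Pic}^{\deg L_v}\cong Y$ is $1$-dimensional for each of the nine $v$; the isomorphism relation identifies $(Y,(L_v))\sim(Y,(L_v\otimes\cL))$ for $\cL\in\operatorname{Pic}^0 Y$, a free action of the $1$-dimensional group $\operatorname{Pic}^0$, cutting the dimension to $10-1-1=8$ (one for the $\cL$-twist, one already absorbed because translations of $Y$ act on $\operatorname{Pic}^0 Y$ trivially — no, translations act \emph{transitively} on each $\operatorname{Pic}^d$ but that is the same datum as the $\cL$-twist). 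I would resolve this carefully rather than by hand-waving; once the relation is pinned down the rest (representability, openness of admissibility, quasi-projectivity, fineness) is routine via standard Picard-scheme theory.
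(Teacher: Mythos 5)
Your overall strategy --- rigidify the genus-one curve by marked points coming from the degree-one line bundles, realize the moduli problem inside fibre products of relative Picard schemes, and observe that admissibility is the open condition that the $L_{i,k}$ with fixed $k$ be pairwise non-isomorphic --- is the same as the paper's. But your rigidification is too weak, and this is a genuine gap rather than a bookkeeping issue. Rigidifying by a single degree-one bundle puts you over $M_{1,1}$, and the claim that ``after the rigidification the automorphisms are killed'' is false: every elliptic curve carries the involution $[-1]$, so the moduli of one-pointed genus-one curves is a Deligne--Mumford stack with nontrivial stabilizers at every point, your $8$-fold fibre product is a stack rather than a scheme, and the subsequent ``free quotient by $\cE$'' step does not apply as stated. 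Moreover the residual identification on the fibres is not the diagonal $\cL$-twist alone: two rigidified tuples over the same point of $M_{1,1}$ are identified by a pair (translation $\psi$, twist $\cL$) constrained to preserve the chosen degree-one bundle, so $\cL$ is determined by $\psi$; under your normalization $\Pic^{d}\cong\Pic^{0}$ the induced action on a factor of degree $d_v$ is twisting by $\cL^{1-d_v}$ (in particular trivial on the remaining degree-one factors). This connected part is indeed a free $1$-dimensional action, but the finite part coming from $\Aut(E,O)$ is not, and your first-paragraph dimension count reflects exactly this unresolved conflation of $\psi$ with $\cL$.

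The paper's proof is engineered to avoid both problems at once: it uses \emph{all three} degree-one bundles, which by admissibility give three \emph{distinct} points and hence a morphism to the fine moduli scheme $M_{1,3}$ of $3$-pointed genus-one curves (killing the curve automorphisms, including $[-1]$ generically), and it spends the entire $\cL$-twist freedom normalizing one degree-zero bundle to $\cO_Y$. What remains is literally an open immersion
\begin{align*}
\Mell \hookrightarrow \Pic^0 \times_{M_{1,3}} \Pic^0 \times_{M_{1,3}} \Pic^2 \times_{M_{1,3}} \Pic^2 \times_{M_{1,3}} \Pic^2
\end{align*}
into a quasi-projective scheme of relative dimension $5$ over the $3$-dimensional base, giving dimension $8$ with no residual group action and no quotient to take, so representability, quasi-projectivity and the dimension are immediate. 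If you replace your single marked point by the three points attached to $L_{0,1},L_{1,1},L_{2,1}$ and use the twist to normalize $L_{0,0}$, your argument becomes the paper's.
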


\begin{proof}
Note that for any invertible sheaf
$L$ of degree 1 on a nonsingular projective curve
\( Y \) of genus \( 1 \),
there exists a unique point \( y \in Y \) such that
\( L \simeq \cO _{ Y } ( y ) \).
Consider the forgetful morphism
$
F \colon \Mell \to M _{ 1, 3 }
$
to the fine moduli scheme of nonsingular 3-pointed curves of genus 1
sending
an elliptic decuple to the pointed curve
$
\left( Y, p _{ 0 }, p _{ 1 }, p _{ 2 } \right),
$
where
\(
  \left(p _{ i _{ 1 } }\right)
  _{
    i _{ 1 }
    \in
    \left\{
        0,1,2
    \right\}
  }
\)
are distinct points on \( Y \) such that
\(
  L _{ ( i _{ 1 }, 1 ) } \simeq \cO _{ Y } \left( p _{ i _{ 1 } } \right)
\).
Let
$
  \pi ^{ d } \colon \Pic ^{ d } \to M _{ 1, 3 }
$
be the relative Picard schemes parametrizing invertible sheaves of degree \( d \in \bZ \).
Note that the morphism \( \pi ^{ d } \) is projective for any \( d \in \bZ \).
Then there is an obvious open immersion
\begin{align}\label{eq:open immersion to what we know well}
  \Mell \hookrightarrow
  \Pic ^{ 0 } \times _{ M _{ 1, 3 } }
  \Pic ^{ 0 } \times _{ M _{ 1, 3 } }
  \Pic ^{ 2 } \times _{ M _{ 1, 3 } }
  \Pic ^{ 2 } \times _{ M _{ 1, 3 } }
  \Pic ^{ 2 }
\end{align}
to the nonsingular and irreducible quasi-projective scheme.
\end{proof}

\begin{lemma} \label{lm:decuple to spherical helix}
If
$
\lb
Y,
\lb L_v \rb_{v \in Q_0}
\rb
$
is an admissible elliptic decuple,
then
$
\lb L_v \rb_{v \in Q_0}
$
is a foundation of an acyclic spherical helix (see~\cite[Definition~4]{2007.07620} for the definition of acyclic spherical helices).
\end{lemma}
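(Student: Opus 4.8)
The plan is to produce the helix explicitly by tensoring with powers of the canonical bundle and to verify the two required properties — that the resulting bi-infinite sequence is a helix generated by the foundation $(L_v)_{v \in Q_0}$, and that it is acyclic and spherical — using Serre duality on the genus-one curve $Y$. First I would recall that on a smooth projective curve $Y$ of genus one the canonical bundle is trivial, $\omega_Y \cong \cO_Y$, so the shift-by-$\ell$ operation in the helix \eqref{eq:helix} becomes $E_{i-\ell} \cong E_i$ up to the required cohomological shift; concretely, with $\ell = 9$ the periodicity forces us instead to work with the \emph{rolled-up} helix algebra \eqref{eq:rolled-up helix algebra}, whose summands $\Hom(E_v, E_{v'} \otimes \omega^{-k})$ must be computed. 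Since $\omega_Y$ is trivial, the grading by $k$ collapses, and each line bundle $L_{i,j}$ has a well-defined degree $j \in \{0,1,2\}$. I would then set $E_{(i,j)} \coloneqq L_{i,j}$ for the foundation and propagate via the mutation formula; because all objects are line bundles (hence spherical: $\hom(L,L) \simeq \bfk \oplus \bfk[-1]$ on a genus-one curve, where the $[-1]$ part is $\Ext^1(L,L) \cong H^1(Y,\cO_Y) \cong \bfk$), the mutations are controlled entirely by $\Hom$ and $\Ext^1$ groups of differences of these line bundles.

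The key computation is the vanishing that yields acyclicity: for any two objects $E_i, E_j$ in the helix with $i < j$ I must show $\Hom^k(E_i, E_j) = 0$ for all $k \ne 0$. Translating through the identification $E_{(i,j)} = L_{i,j}$ and the periodicity, this reduces to showing, for the relevant pairs of line bundles $M, N$ on $Y$ arising as differences along the helix, that $\Ext^1(M, N) = H^1(Y, M^\dual \otimes N) = 0$. By Serre duality (using $\omega_Y \cong \cO_Y$) this is equivalent to $\Hom(N, M) = H^0(Y, N^\dual \otimes M) = 0$. For line bundles on a genus-one curve, $H^0 = 0$ precisely when the degree is negative, or when the degree is zero and the bundle is non-trivial. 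The admissibility hypothesis $\Hom(L_{i,k}, L_{j,k}) = 0$ for $i \ne j$ is exactly the degree-zero non-triviality condition within a block; together with the degree bookkeeping $\deg L_{i,j} = j$ and the arrow structure of $Q$ (which only connects vertices whose second coordinates differ by one, hence whose line bundles differ in degree), this should cover all the required pairs. I would organize this as a short case analysis over the three types of differences: within a block (degree $0$, handled by admissibility), one block apart (degree $\pm 1$), and two blocks apart (degree $\pm 2$), together with their translates under the period-$9$ helix structure.

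Having established the vanishing $\Ext^{\ne 0}(E_i, E_j) = 0$ for $i < j$, acyclicity follows; the \emph{spherical} property is immediate since each $E_i$ is a line bundle on a genus-one curve, so $\hom(E_i, E_i) \simeq \bfk \oplus \bfk[-1]$, which is the cohomology of a $1$-sphere as required in \cite[Definition~4]{2007.07620}. Finally I would check that $(L_v)_{v \in Q_0}$ is genuinely a \emph{foundation}, i.e.\ that the helix it generates is well-defined and strong: strongness of the foundation amounts to $\Ext^{\ne 0}$ among the nine line bundles vanishing, which is contained in the acyclicity statement already proved, and the displayed isomorphism $\End(\bigoplus_v L_v) \cong \bfk Q / I$ just before the lemma confirms that the $\Hom$-algebra has the correct shape. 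The main obstacle will be the bookkeeping in the case analysis — keeping track of which differences of line bundles appear as one moves along the period-$9$ helix and confirming that each such difference has either negative degree or is a non-trivial degree-zero bundle covered by admissibility; everything else is a direct application of Riemann--Roch and Serre duality on $Y$.
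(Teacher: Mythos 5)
Your proposal follows the same basic strategy as the paper's proof: reduce acyclicity to the vanishing of $H^1(Y, E_i^{-1}\otimes E_j)$ for $i<j$ in the helix, use the fact that on a genus-one curve $H^1(Y,\cL)=0$ whenever $\deg\cL\ge 0$ unless $\cL\cong\cO_Y$, handle the degree-zero pairs by admissibility, and note that line bundles on $Y$ are automatically spherical. That part is fine.

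There is, however, a genuine error in your setup of the helix, and it is not just bookkeeping. You assert that since $\omega_Y\cong\cO_Y$ the ``grading by $k$ collapses'' and the helix becomes periodic with each $L_{i,j}$ keeping degree $j\in\{0,1,2\}$. That is not how the spherical helix of \cite[Definition~3]{2007.07620} works: the paper's proof explicitly uses that the helix generated by the decuple satisfies $\deg L_{i,k}=k$ for \emph{all} $k\in\bZ$, so the degrees are unbounded and strictly increase from block to block. (The periodicity of the helix on the cubic surface comes from tensoring with $\omega_X$, and the relevant line bundle on $Y$ is the restriction of $\omega_X$ to the anticanonical curve, which has degree $-3$, not the trivial bundle $\omega_Y$.) Under your periodic model the lemma would actually be \emph{false}: for $i<j$ sufficiently far apart you would get pairs with $\deg(E_i^{-1}\otimes E_j)<0$, and then $H^0=0$ forces $h^1=-\deg>0$ by Riemann--Roch, so $\Hom^1(E_i,E_j)\ne 0$. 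Your case analysis ``degree $\pm 1$, $\pm 2$'' silently includes exactly these fatal cases without resolving them. Once the helix is set up correctly, only non-negative degrees occur for $i<j$, equality of degrees happens only within a block, and there you also need the (small but necessary) observation that the non-isomorphism $L_{i,k}\not\cong L_{j,k}$ propagates from the foundation $k\in\{0,1,2\}$ to every block $k\in\bZ$ of the helix; with these corrections your argument becomes the paper's.
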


\begin{proof}
    It is a basic fact about a smooth projective curve
    \(
       Y
    \)
    of genus one and an invertible sheaf
    \(
       \cL
    \)
    on
    \(
       Y
    \)
    with
    \(
       \deg \cL \ge 0
    \)
    that
    \(
        H ^{ 1 }
        \left(
            Y,
            \cL
        \right)
        =
        0
    \)
    unless
    \(
       \cL
       \cong
       \cO _{ Y }
    \).
    On the other hand the spherical helix generated from the admissible elliptic decuple (see~\cite[Definition~3]{2007.07620} for the definition) satisfies
    \(
       \deg
       L _{ i, k }
       =
       k
    \)
    and that
    \(
       L _{ i, k }
       \not\cong
       L _{ j, k }
    \)
    for
    \(
       i
       \neq
       j
    \).
    The assertion follows from these facts.
\end{proof}

The main result of \cite{2007.07620}
combined with \pref{lm:decuple to spherical helix}
shows that an admissible elliptic decuple
gives an AS-regular $\tbz$-algebra of type $\Qt$.
The endomorphism algebra
$
\End \lb \bigoplus_{v \in Q_0} E_v \rb
$
of the resulting exceptional collection
is identified with
$
\End \lb \bigoplus_{v \in Q_0} L_v \rb
$
by construction.
This gives a morphism
$
\Mell \to \cMrel
$
of algebraic stacks,
and
let
$
\scrA \colon \Mell \dashrightarrow \Mbar_\chi
$
be the composite with the natural rational map
$
\cMrel \dashrightarrow \Mbar_\chi
$.

For
an admissible elliptic decuple
$
\lb
Y,
\lb L_v \rb_{v \in Q_0}
\rb
$,
let
$
\widetilde{c} \colon Y \to \cN(I)
$
be the classifying morphism
of the tautological family
$
\lb L_v \rb_{v \in Q_0}
$
of representations of $\bfk Q / I$
over $Y$,
where
$I$
is the ideal of relations
associated with the decuple.
This is an instance of a general construction
of the tautological morphism from a scheme
(or a stack)
to the moduli scheme (or the stack)
of modules
over the endomorphism algebra
of a coherent sheaf.
A sufficient condition for such a morphism
to be a closed immersion
is a generalization of the ampleness of a line bundle,
and
has been studied by many groups of people,
including \cite{Craw-Smith, Bergman-Proudfoot, MR3803802}
to name a few.

Recall our choice
\pref{eq:the special theta}
for the stability parameter.

\begin{lemma}
The classifying morphism
$
\widetilde{c}
$
factors through the open substack
of $\theta$-stable modules.
\end{lemma}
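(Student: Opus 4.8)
The statement to prove is that for an admissible elliptic decuple $\lb Y, \lb L_v \rb_{v \in Q_0} \rb$, the classifying morphism $\widetilde{c} \colon Y \to \cN(I)$ — classifying the tautological family $\lb L_v \rb_{v \in Q_0}$ of representations of $\bfk Q / I$ — lands inside the open substack of $\theta$-stable modules, for our chosen $\theta$ in \eqref{eq:the special theta}. The plan is to unwind the definition of $\theta$-stability from \cite{King}: a representation $M$ of $\bfk Q$ of dimension vector $\bsone$ is $\theta$-stable if and only if $\theta(\bsd') > 0$ for every proper nonzero subrepresentation $M' \subset M$ with dimension vector $\bsd' = \dim M'$. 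Fixing a $\bfk$-point $y \in Y$, the corresponding representation is $M_y = \lb (L_v|_y)_{v \in Q_0}, (\ev_{a})_{a \in Q_1} \rb$, where each vertex space is the fiber $L_v|_y \cong \bfk$ and the arrow $x_{i,j,k}$ acts by the evaluation at $y$ of the (up to scalar unique) map $L_{i,j} \to L_{i+k,j+1}$. So a subrepresentation of $M_y$ is exactly a subset $P \subseteq Q_0$ that is closed under arrows whose source lies in $P$ — i.e. $s(a) \in P \Rightarrow t(a) \in P$ — together with the requirement that the relevant evaluation maps be non-zero on $P$; but since we only need an \emph{upper} bound, it suffices to check that $\theta(P) := \sum_{v \in P} \theta_v > 0$ for every such arrow-closed proper nonzero $P$.

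First I would enumerate the arrow-closed subsets $P$ of $Q_0$. Because the quiver $Q$ has all arrows going from the $j=0$ block to the $j=1$ block and from the $j=1$ block to the $j=2$ block (with the three-by-three bipartite pattern between consecutive blocks), an arrow-closed set is determined by: which vertices of the first block it contains (any subset $P_0 \subseteq \{(i,0)\}$), which vertices of the second block (forced to contain all targets of arrows out of $P_0$), and which of the third block (forced to contain all targets of arrows out of $P_1$). Since between consecutive blocks every vertex has out-degree three hitting all three vertices of the next block, as soon as $P_0 \ne \emptyset$ we must have $P_1 = \{(i,1)\}_{i}$ entirely, hence $P_2$ entirely, so $P = Q_0$ — not proper. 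Therefore every \emph{proper} arrow-closed nonzero $P$ has $P_0 = \emptyset$; similarly if $P_1 \ne \emptyset$ then $P_2$ is the whole third block. So the proper arrow-closed nonzero subsets are precisely: (a) $P_1 \subseteq \{(i,1)\}$ nonempty together with the full third block $\{(i,2)\}$; and (b) any nonempty $P_2 \subseteq \{(i,2)\}$ alone. This reduces the verification to a finite, small check.

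Then I would simply check the inequality $\theta(P) > 0$ against the weights $\lb \theta_{0,1},\theta_{1,1},\theta_{2,1};\theta_{0,2},\theta_{1,2},\theta_{2,2}\rb = \lb 3,3,6;7,7,7 \rb$. For type (b), any nonempty subset of $\{7,7,7\}$ sums to at least $7 > 0$. For type (a), the third block contributes $7+7+7 = 21$ and $P_1$ contributes a nonnegative amount (the smallest positive contribution being $3$), so $\theta(P) \ge 21 > 0$. Hence every $M_y$ is $\theta$-stable, and since stability is an open condition, $\widetilde{c}$ factors through the $\theta$-stable locus; running this over the family shows the same on the level of stacks. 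The main obstacle is really just the bookkeeping in the enumeration step — being careful that an arrow-closed set truly propagates across both block-boundaries because the bipartite incidence between consecutive blocks is "complete enough" (each source hits all three targets in the next block), so that there are no sneaky intermediate arrow-closed sets; once that combinatorial claim is pinned down the numerical check is immediate, and in fact one could alternatively cite \pref{lm:N_theta(Q)}, where the same description of submodules (subsets $S \subseteq Q_0$ with $t(a) \in S \Rightarrow s(a) \in S$, the opposite convention) was already used, and note that genericity of $\theta$ was verified there via $\sum_{v \in P}\theta_v \ne 0$; here we get the strict positivity on the relevant $P$'s for free from the sign pattern of $\theta$ (negative on the $j=0$ block, positive on the rest).
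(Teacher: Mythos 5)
The numerical part of your check is fine, but the enumeration of subrepresentations is incomplete at exactly the points of $Y$ that matter, and the parenthetical ``since we only need an upper bound'' points in the wrong direction. At a point $y \in Y$ the arrow $x_{i,j,k}$ acts on the fiber representation by the evaluation at $y$ of the morphism $L_{i,j} \to L_{i+k,j+1}$, which is a nonzero section of a line bundle of degree one (since $\deg L_{i,j}=j$) and therefore vanishes at exactly one point of $Y$. When such a map vanishes at $y$, the closure condition $s(a) \in P \Rightarrow t(a) \in P$ is simply not imposed for that arrow, so the set of subrepresentations of $M_y$ is a \emph{superset}, not a subset, of the set of fully arrow-closed subsets of $Q_0$. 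Your argument therefore only covers the points of $Y$ where all $18$ evaluations are nonzero, and omits precisely the extra subrepresentations that appear on the finite but nonempty locus where some evaluation vanishes. These extra subsets are the dangerous ones: for instance, if the three morphisms $L_{i,0} \to L_{0,1}$, $i=0,1,2$, all vanished at the same point $y$, then $Q_0 \setminus \{(0,1)\}$ would be a subrepresentation with $\theta$-value $-3 < 0$, and stability would fail at $y$. Ruling this out requires the admissibility hypothesis (the $L_{i,0}$ are pairwise non-isomorphic, so the three zero points are distinct and at most one map into each $(j,1)$ vanishes at a given $y$), which your proof never invokes. This is exactly what the paper's proof is doing when it appeals to ``the structure of the zero loci of morphisms among $(L_v)_{v \in Q_0}$'' to constrain which dimension vectors can occur as submodules before evaluating the sign of $\theta$.

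A minor point of comparison: the paper uses right modules, so its submodules correspond to subsets closed under $t(a) \in S \Rightarrow s(a) \in S$ and it verifies $\theta(S)<0$ on them; your successor-closed subsets are the complements, and since $\theta(\bsone)=0$ the two checks are equivalent, so that discrepancy is harmless. The genuine gap is the missing analysis at the vanishing loci, which is where the admissibility of the decuple actually enters.
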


\begin{proof}
One needs to show that for any $y \in Y$,
the tautological representation $M$
does not have a submodule $N$
such that $\theta(N) > 0$.
It follows from the structure
of the zero loci of morphisms among
$
\lb L_v \rb_{v \in Q_0}
$
that
if the dimension vector $d = \dim N$
satisfies $d_{i,2} = 1$
for some $i \in \{ 0, 1, 2 \}$,
one has $d_{j,0} = 1$
for all $j \in \{ 0, 1, 2 \}$,
so that
$\theta(d) < 0$.
If $d_{i,2} = 0$
for all $i \in \{ 0, 1, 2 \}$
and $d_{j,1} = 1$
for some $j \in \{ 0, 1, 2 \}$,
then one has
$|\{ k \in \{ 0, 1, 2 \} \mid d_{k,0} = 1 \}| \ge 2$,
which implies $\theta(d) < 0$.
\end{proof}

Let
$
c \colon Y \to N_\theta(I)
$
be the morphism through which the morphism $\widetilde{c}$ factors.

\begin{lemma} \label{lm:closed immersion}
The classifying morphism $c$
is a closed immersion.
\end{lemma}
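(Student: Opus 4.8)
The plan is to verify that $c \colon Y \to N_\theta(I)$ is a closed immersion by checking the standard two-part criterion: $c$ is injective on $\bfk$-points and injective on tangent spaces, together with properness. Properness is immediate: $Y$ is a smooth projective curve and $N_\theta(I)$ is projective (it sits inside the projective toric variety $N_\theta$ of \pref{lm:N_theta(Q)}), so $c$ is automatically proper, hence a closed immersion as soon as it is a locally closed immersion. So the real content is injectivity on points and on tangent vectors.

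For injectivity on points, I would argue that two points $y, y' \in Y$ give isomorphic representations of $\bfk Q / I$ only if $y = y'$. The representation attached to $y$ has underlying vector spaces the fibers $(L_v)_y$, and the arrow $x_{i,j,k}$ acts as the evaluation at $y$ of a chosen global section of $L_{i+k,j+1} \otimes L_{i,j}^\dual$; since $\deg L_{i,j} = j$ and the decuple is admissible, each of these Hom-spaces is one-dimensional (by Riemann--Roch on the genus-one curve, using the admissibility vanishing $\Hom(L_{i,k},L_{j,k})=0$), so the section is unique up to scalar. The data of the representation up to isomorphism is then exactly the collection of ratios of these evaluated sections, i.e. the image of $y$ under the maps $Y \to \bP^1$ (or $\bP(\text{something})$) given by pairs of sections. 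The key point is that some such morphism $Y \to \bP^N$ built from these sections is already an embedding — e.g. the sections of a degree-$2$ line bundle $L_{i,2}$ give an embedding of the genus-one curve $Y$ as a plane conic, or more robustly, a degree $\ge 3$ combination separates points and tangents. Concretely I expect one extracts from the representation the morphism $\varphi_{|L_{i,2}|} \colon Y \hookrightarrow \bP^2$ (or the linear system of a sum of the $L_v$'s, which has degree $\ge 3$ and is very ample on a genus-one curve), and injectivity of $c$ follows from very ampleness of that linear system; this is the same ``ampleness-type'' phenomenon referenced in the paragraph citing \cite{Craw-Smith, Bergman-Proudfoot, MR3803802}.

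For injectivity on tangent spaces, the tangent space to $N_\theta(I)$ at $[M]$ where $M = c(y)$ is computed by the usual deformation theory of quiver representations with relations, namely a subquotient of $\prod_{a \in Q_1} \Hom((L_{s(a)})_y, (L_{t(a)})_y)$ cut out by the relations and modulo the image of the gauge Lie algebra; equivalently it is $\operatorname{Ext}^1_{\bfk\Gamma}(M,M)$. The differential $d c_y$ sends the tangent vector $v \in T_y Y$ — a class in $\operatorname{Ext}^1_Y(\cO_y,\cO_y) \cong (T_yY)$ — to the induced first-order deformation of the tuple of fibers and evaluated sections. I would identify $d c_y$ with the Yoneda/cup-product map $T_yY \to \operatorname{Ext}^1_{\bfk\Gamma}(M,M)$ coming from viewing $M$ as $\bigoplus_v (L_v)_y$ with the $\bfk\Gamma$-action, and show it is injective. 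This can be done either by the very-ampleness argument again (a very ample linear system separates tangent vectors, so already the sub-data of sections of $L_{i,2}$ detects $v$), or more conceptually by noting that the classifying morphism for the family $(\cL_v)$ over $Y$ is, up to the equivalence $D^b\coh X \simeq D^b\module\bfk\Gamma$, essentially the graph of the structure morphism, so its differential is the identity on the relevant summand.

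The main obstacle I anticipate is the second part: cleanly identifying the differential $d c_y$ with a cup-product map into $\operatorname{Ext}^1_{\bfk\Gamma}(M,M)$ and proving its injectivity without drowning in the explicit $18$-arrow, $9$-relation combinatorics. The cleanest route is probably to reduce both injectivity statements to the very ampleness of a single linear system of degree $\ge 3$ on the genus-one curve $Y$ — which separates points and tangent vectors by the classical theory of curves — by exhibiting that the corresponding projective embedding of $Y$ factors through $c$. The bookkeeping to extract exactly that linear system from the representation, and to check compatibility with the normalization $\cU_{2,0} \cong \cO_{N_\theta}$, is routine but is where the care is needed.
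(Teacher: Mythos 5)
Your overall strategy (reduce everything to a projective embedding of $Y$ built out of the path-sections, which must be shown to factor through $c$) is in the end the same as the paper's, but as written there is a genuine gap at exactly the pivotal point: the choice of linear system. The explicit system you name does not work: a degree-$2$ line bundle $L_{i,2}$ on a genus-one curve has $h^0=2$ and gives a $2{:}1$ cover of $\bP^1$, not an embedding as a plane conic (a genus-one curve cannot be a conic), so ``the sections of $L_{i,2}$'' separate neither points nor tangents. Worse, your fallback ``a degree $\ge 3$ combination'' is not available among the intrinsic data: every Hom-space between two members of the decuple, $\Hom(L_{i,j},L_{i',j'})$, has degree at most $2$, so no single path-space gives a very ample system. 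One is forced to take \emph{products} of sections from two different degree-$2$ Hom-spaces; this is what the paper does, choosing $p_1,p_2$ generating $\Hom(L_{0,0},L_{0,2})$ and $q_1,q_2$ generating $\Hom(L_{0,0},L_{1,2})$ and using the four products $p_i\otimes q_j$, i.e.\ the composite $Y\to\bP^1\times\bP^1\hookrightarrow\bP^3$ of degree $4$. That this is a closed immersion is where admissibility enters ($L_{0,2}\not\cong L_{1,2}$, so the two degree-$2$ pencils have no common fibers or common ramification). Your plan never isolates this, and without it neither the point-separation nor the tangent-separation step can be completed.

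The second half of your plan (properness plus injectivity on points and on tangent spaces) is a legitimate criterion, but the paper avoids the tangent-space computation entirely by a cleaner device that you should note: by the universal property of $N_\theta$ the sections $p_i,q_j$ lift to morphisms $\ptilde_i,\qtilde_j$ between the universal bundles, giving a morphism $\itilde\colon U\to\bP^3$ on an open $U\supseteq c(Y)$ where $\cU_{0,2}\otimes\cU_{1,2}$ is generated by the four products, with $\itilde\circ c=i$; since $i$ is a closed immersion and $\itilde$ is separated, $c$ is a closed immersion. Your own sketch gestures at this factorization (``exhibiting that the corresponding projective embedding of $Y$ factors through $c$''), which is the right idea, but it is left as a plan, and the identification of $dc_y$ with a cup-product map into $\operatorname{Ext}^1_{\bfk\Gamma}(M,M)$ that you flag as the main obstacle is unnecessary once the factorization through the universal bundles is in place. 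In short: right architecture, but the concrete linear system you propose fails, the correct one (a Segre product of two degree-$2$ path-spaces, using admissibility) is missing, and the factorization step that makes the argument close is not carried out.
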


\begin{proof}
Choose paths
$
p _{ 1 }, p _{ 2 } \in \bfe_{0,2} \bfk \Gamma \bfe_{0,0}
$
and
$
q _{ 1 }, q _{ 2 } \in \bfe_{ 1, 2 } \bfk \Gamma \bfe_{ 0, 0 }
$
whose images,
denoted by the same symbols by abuse of notation,
generate
$
\Hom \left( L_{ 0, 0 }, L_{ 0, 2 } \right)
$
and
$
\Hom \left( L_{ 0, 0 }, L_{ 1, 2 } \right)
$
respectively.
The universal property of $N_\theta$
implies the existence of
$
\ptilde_i
\in
\Hom \lb \cU_{0,0}, \cU_{0,2} \rb
$
and
$
\qtilde_i
\in
\Hom \lb \cU_{0,0}, \cU_{1,2} \rb
$
satisfying
$
c^*(\ptilde_i) = p_i
$
and
$
c^*(\qtilde_i) = q_i
$
for $i \in \{ 1, 2 \}$.

Take an open subset
$
U \subset N_\theta(I)
$
containing the closed subset
$c(Y)$
and the invertible sheaf
$
\cU_{0,2} \otimes \cU_{1,2}
$
is generated by the global sections
$
  p _{ 1 } \otimes q _{ 1 },
  p _{ 1 } \otimes q _{ 2 },
  p _{ 2 } \otimes q _{ 1 }
$,
and
$
  p _{ 2 } \otimes q _{ 2 }
$.
Then we obtain the commutative diagram
\begin{equation}
\begin{tikzcd}
  Y \arrow[r, "c"] \arrow[rd, "i"'] & U \arrow[d, "\itilde"]\\
  & \bP ^{ 3 }
\end{tikzcd}
\end{equation}
where
\begin{align}
  i =\left[ p _{ 1 } \otimes q _{ 1 }:
  p _{ 1 } \otimes q _{ 2 }:
  p _{ 2 } \otimes q _{ 1 }:
  p _{ 2 } \otimes q _{ 2 } \right]
  \colon
  Y \to \bP ^{ 3 },\\
  \itilde =\left[ \ptilde _{ 1 } \otimes \qtilde _{ 1 }:
  \ptilde _{ 1 } \otimes \qtilde _{ 2 }:
  \ptilde _{ 2 } \otimes \qtilde _{ 1 }:
  \ptilde _{ 2 } \otimes \qtilde _{ 2 } \right]
  \colon
  U \to \bP ^{ 3 }.
\end{align}
The fact that
$i$ is a closed immersion
and
$\itilde$ is separated
(recall that
$
N_\theta
$
is projective over $\bfk$)
shows that
$c$ is also a closed immersion.
\end{proof}

\begin{lemma}\label{lm:structure of curve with trivial dualizing sheaf}
If
$Y$ and $Z$
are
connected
and reduced projective curves
with the trivial dualizing sheaves,
then any closed immersion
$
\iota \colon Y \hookrightarrow Z
$
is an isomorphism.
\end{lemma}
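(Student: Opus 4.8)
The plan is to exploit the hypotheses---$Y$ and $Z$ are connected, reduced, projective, with trivial dualizing sheaves---to constrain their arithmetic genera and the degree of the closed immersion, and then conclude by a Hilbert-polynomial or cohomological comparison. First I would record that a connected, reduced, projective curve $C$ with $\omega_C \cong \cO_C$ has arithmetic genus one: indeed $h^0(\cO_C) = 1$ by connectedness and reducedness (so $\cO_C$ has no nilpotents to inflate $H^0$), while $h^1(\cO_C) = h^0(\omega_C) = h^0(\cO_C) = 1$ by Serre duality on a Cohen--Macaulay curve, whence $p_a(C) = 1$. Apply this to both $Y$ and $Z$.

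Next I would analyze the closed immersion $\iota \colon Y \hookrightarrow Z$ via the ideal sheaf exact sequence
\begin{align}
0 \to \cI_{Y/Z} \to \cO_Z \to \iota_* \cO_Y \to 0.
\end{align}
Taking cohomology and using $h^0(\cO_Z) = h^0(\cO_Y) = 1$ shows $H^0(\cI_{Y/Z}) = 0$, so $\cI_{Y/Z}$ is a torsion-free (indeed rank-zero or rank-one) subsheaf of $\cO_Z$ with no global sections. The long exact sequence also gives
\begin{align}
0 \to H^0(\cI_{Y/Z}) \to H^0(\cO_Z) \to H^0(\cO_Y) \to H^1(\cI_{Y/Z}) \to H^1(\cO_Z) \to H^1(\cO_Y) \to 0,
\end{align}
and since the middle map $H^0(\cO_Z) \to H^0(\cO_Y)$ is an isomorphism of one-dimensional spaces, the connecting map forces $H^1(\cI_{Y/Z}) \hookrightarrow H^1(\cO_Z)$; combined with $h^1(\cO_Z) = h^1(\cO_Y) = 1$ and exactness, the map $H^1(\cO_Z) \to H^1(\cO_Y)$ is an isomorphism and hence $H^1(\cI_{Y/Z}) = 0$. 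Therefore $\chi(\cI_{Y/Z}) = h^0(\cI_{Y/Z}) - h^1(\cI_{Y/Z}) = 0$.

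The final step is to deduce $\cI_{Y/Z} = 0$. If $Y$ is a proper closed subscheme of $Z$, then either $Y$ misses some irreducible component of $Z$, or $Y$ is a proper thickening-free subscheme meeting every component; in the first case $\cI_{Y/Z}$ contains the ideal sheaf of a nonempty subcurve and has strictly positive generic rank on that component, so $\chi(\cI_{Y/Z}) > 0$ is impossible only after twisting---instead I would argue directly: since $Z$ is reduced and $\iota$ is a closed immersion with $Y$ a curve, if $Y \neq Z$ then $\cI_{Y/Z}$ is a nonzero coherent sheaf, and a nonzero ideal sheaf on a reduced projective curve supported on a union of components has $h^0$ of some sufficiently positive twist growing, but more cleanly: $\cO_Y$ is a quotient of $\cO_Z$ of the same length in every sense measured by $\chi$, namely $\chi(\cO_Y) = \chi(\cO_Z) = 1 - p_a = 0$, forcing $\chi(\cI_{Y/Z}) = 0$; and a nonzero subsheaf of $\cO_Z$ on a pure-dimension-one reduced scheme that has $\chi = 0$ and $h^0 = 0$ must vanish because such a sheaf $\cF \subset \cO_Z$ satisfies $\chi(\cF(n)) = (\deg \cF) n + \chi(\cF)$ with $\deg \cF \geq 0$ and strict positivity on any component where $\cF$ has rank one, while on a component where $\cF$ has rank zero $\cF$ is a torsion subsheaf of $\cO_Z$, hence zero by reducedness---so $\cF$ is locally free of rank one on its support with $h^0 = 0$ and $h^1 = 0$, impossible on a connected curve of genus one unless the support is empty. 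The main obstacle is handling possibly reducible, non-Gorenstein-looking curves carefully: one must make sure "trivial dualizing sheaf" plus reduced plus connected really does pin down $\chi(\cO_C) = 0$ and that the torsion/rank argument for $\cI_{Y/Z}$ is valid component-by-component on a reduced (but possibly reducible and singular) curve. Once that bookkeeping is in place the conclusion $\cI_{Y/Z} = 0$, i.e.\ $\iota$ is an isomorphism, follows immediately.
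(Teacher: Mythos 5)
Your first two steps are correct: for a connected reduced projective curve $C$ with $\omega_C \cong \cO_C$ one has $h^0(\cO_C) = h^1(\cO_C) = 1$, and your long exact sequence does give $H^0(\cI_{Y/Z}) = H^1(\cI_{Y/Z}) = 0$. The genuine gap is the final step. It is not true that a nonzero coherent subsheaf $\cF \subset \cO_Z$ of a connected reduced projective curve with $\chi(\cF) = h^0(\cF) = h^1(\cF) = 0$ must vanish. Consider $Z = E \cup \bP^1$, an elliptic curve and a rational curve glued transversally at one point, with $Y = E$: then $\cI_{Y/Z}$ is (the pushforward of) $\cO_{\bP^1}(-1)$, which is nonzero with $h^0 = h^1 = 0$; moreover $p_a(Z) = p_a(Y) = 1$ and $h^0(\cO_Z) = h^0(\cO_Y) = 1$, so every numerical fact you derive holds in this example while $\iota$ is not an isomorphism. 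Of course this $Z$ does not satisfy the hypothesis ($\omega_Z$ restricts to $\cO_{\bP^1}(-1)$ on the rational component), so the lemma is not contradicted --- but your argument uses the triviality of the dualizing sheaves only through the dimensions $h^0$ and $h^1$, i.e.\ only through $p_a = 1$, and that is strictly weaker than the hypothesis and provably insufficient. The subsidiary claims in your last paragraph are also unwarranted: $\cI_{Y/Z}$ need not be locally free on its support (it fails to be at points of $Y \cap \overline{Z \setminus Y}$), and its support is $\overline{Z \setminus Y}$, which is neither connected nor of genus one in general, so the concluding ``impossible on a connected curve of genus one'' does not apply.

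The paper's proof is genuinely different and uses the sheaf-level isomorphisms in an essential way: it computes $\omega_Y \cong \iota^! \omega_Z = \cHom_Z(\iota_*\cO_Y, \cO_Z)$ and shows that this identifies $\omega_Y$ with an ideal subsheaf of $\cO_Y$ contained in $\ann_{\cO_Z}(\cI)\cdot\cO_Y$, which is a \emph{proper} ideal when $Y \ne Z$ because connectedness of $Z$ forces $Y \cap \overline{Z\setminus Y} \ne \emptyset$; this contradicts $\omega_Y \cong \cO_Y$. If you want to stay closer to your computation, it can be repaired by one further application of Serre duality on $Z$: from $H^1(Z, \cI_{Y/Z}) = 0$ one gets $\Hom_Z(\cI_{Y/Z}, \omega_Z) \cong H^1(Z, \cI_{Y/Z})^\dual = 0$, while the inclusion $\cI_{Y/Z} \hookrightarrow \cO_Z \cong \omega_Z$ would be a nonzero element of this group if $\cI_{Y/Z} \ne 0$. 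Note that this repair invokes $\omega_Z \cong \cO_Z$ at the level of sheaves (one checks on the example above that a mere global section of $\omega_Z$ is not enough, since it may vanish on the support of $\cI_{Y/Z}$), which is precisely the ingredient missing from your write-up.
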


\begin{proof}
There is an isomorphism of \( \cO _{ Y } \)-modules
\begin{align}\label{eq:dualizing sheaf}
  \omega _{ Y }
  \cong \iota ^{ ! } \omega _{Z}
  =
  \cHom _{ Z} \left( \iota _{ * } \cO _{ Y }, \omega _{ Z } \right).
\end{align}
By the assumption, this is also isomorphic to
\(
  \cHom _{ Z} \left( \iota _{ * } \cO _{ Y }, \cO _{ Z } \right).
\)

Let
$
\cI \subset \cO_Z
$
be the ideal sheaf
of the closed subscheme
$
Y \subset Z
$.
By applying the functor
\(
  \cHom _{ Z} \left( \iota _{ * } \cO _{ Y }, - \right)
\)
to the short exact sequence
\begin{align}
  0 \to \cI \to \cO _{ Z } \to \iota _{ * } \cO _{ Y } \to 0,
\end{align}
one obtains an exact sequence
\begin{align}
  \begin{aligned}
  0
  \to
  \cHom _{ Z} \left( \iota _{ * } \cO _{ Y }, \cI \right)
  \to
  \cHom _{ Z} \left( \iota _{ * } \cO _{ Y }, \cO _{ Z} \right)
  \xrightarrow[]{ \alpha }
  \cHom _{ Z} \left( \iota _{ * } \cO _{ Y }, \iota _{ * } \cO _{ Y } \right)
  \cong
  \iota _{ * } \cO _{ Y }
\end{aligned}
\end{align}
of coherent sheaves on \( Z \).

Let
\( \cJ \subset \cO _{ Z }\)
be the ideal sheaf of the closed subscheme
\(
  \overline{ Z \setminus Y } \subset Z
\).
The reducedness of \( Z\) implies
\( \cI \cJ = 0\), so that
\(
  \cJ \subset \ann _{ \cO _{ Z }} \cI
\).
Since
\(
  \cI _{ y } = \cO _{ Z, y }
\)
for any closed point
\( y \in Z \setminus Y \),
one can also check the other inclusion, to conclude
\(
  \cJ = \ann _{ \cO _{ Z }} \cI
\).

Since the subsheaf
\(
  \cHom _{ Z} \left( \iota _{ * } \cO _{ Y }, \cI \right)
  \subset
  \omega _{ Y }
\)
is supported in the finite set
\(
  Y \cap \overline{ Z \setminus Y } \subset Y
\)
and
\(
  \omega _{ Y }
\)
is an invertible sheaf on \( Y \), it follows that
\(
  \cHom _{ Z} \left( \iota _{ * } \cO _{ Y }, \cI \right) = 0
\).
Therefore
\( \omega _{ Y } \) is identified with the ideal sheaf \( \image \alpha  \subset \cO _{ Y } \), which is obviously contained in
\( \ann _{ \cO _{ Z } } ( \cI ) \cdot \cO _{ Y } \).

If \( Z \neq Y \), then the connectedness of \( Z \) implies that
\(
  Y \cap \overline{ Z \setminus Y } \neq \emptyset
\),
which
in turn
coincides with the co-support of
\(
  \cJ \cdot \cO _{ Y }
  =
  \ann _{ \cO _{ Z } } ( \cI ) \cdot \cO _{ Y }
\).
Hence one has
\begin{align}
  \omega _{ Y }
  \subseteq
  \ann _{ \cO _{ Z } } ( \cI ) \cdot \cO _{ Y }
  \subsetneq
  \cO _{ Y },
\end{align}
which contradicts
\(
\omega _{ Y } \cong \cO _{ Y }
\)
since
\(
  \End _{ Y } ( \omega _{ Y } )
  \cong
  \bfk.
\)
Thus we conclude the proof.
\end{proof}

From Lemmas
\ref{lm:N_theta},
\ref{lm:closed immersion},
and
\ref{lm:structure of curve with trivial dualizing sheaf},
one obtains the following:

\begin{corollary} \label{cr:isomorphism}
The closed immersion
$
c
$
is an isomorphism,
and one has
$
L_v \cong c^* \cU_v
$
for any $v \in Q_0$.
\end{corollary}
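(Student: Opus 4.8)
The plan is to assemble the three preceding lemmas, applied with $Z = N_\theta(I)$, where $I$ is the relation attached to a general admissible elliptic decuple $\lb Y, \lb L_v \rb_{v \in Q_0} \rb$ --- ``general'' meaning that $I$ lies in the dense open subset of $M_\chi$ singled out by \pref{lm:N_theta}. First I would record what the two curves in play are. Being a smooth projective curve of genus one, $Y$ is a connected reduced projective curve with trivial dualizing sheaf. On the other side $N_\theta(I)$ is a closed subscheme of the projective toric variety $N_\theta$, and by \pref{lm:N_theta} it is, for general $I$, likewise a connected reduced projective curve with trivial dualizing sheaf.

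Next I would invoke \pref{lm:closed immersion}, according to which the classifying morphism $c \colon Y \to N_\theta(I)$ of the tautological family $\lb L_v \rb_{v \in Q_0}$ is a closed immersion. Feeding $\iota = c$ and the two curves just described into \pref{lm:structure of curve with trivial dualizing sheaf} then yields immediately that $c$ is an isomorphism. This is the entire content of the first assertion, and nothing further is needed for it.

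It remains to identify $c^* \cU_v$ with $L_v$, and for this I would unwind the universal property defining $c$. Since $c$ classifies the family $\lb L_v \rb_{v \in Q_0}$, the pulled-back universal family $\lb c^* \cU_v \rb_{v \in Q_0}$ and $\lb L_v \rb_{v \in Q_0}$ agree, as families of $\theta$-stable $\bfk \Gamma$-modules of dimension vector $\bsone$, up to a common twist by a line bundle on $Y$; comparing the summands at the vertex $(2,0)$ and using the normalization $\cU_{2,0} \cong \cO_{N_\theta(I)}$ identifies this twist as $L_{2,0}^{-1}$, so that $c^* \cU_v \cong L_v \otimes L_{2,0}^{-1}$ for every $v \in Q_0$. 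Since an admissible elliptic decuple is itself only well defined up to a simultaneous twist of all the $L_v$ by a line bundle on $Y$ (\pref{df:moduli of elliptic decuples}), we may replace our representative by its twist with $L_{2,0}^{-1}$, for which the component at $(2,0)$ is $\cO_Y$, and then $c^* \cU_v \cong L_v$ for all $v$. I expect the only genuine subtlety to lie here, in this last bookkeeping --- matching the twist ambiguity of a universal family on a quiver moduli space against the twist ambiguity built into the moduli functor of $\Mell$, and confirming that the genericity hypothesis of \pref{lm:N_theta} is at our disposal for the decuples we care about --- since the geometric substance of the statement is already carried entirely by \pref{lm:N_theta}, \pref{lm:closed immersion}, and \pref{lm:structure of curve with trivial dualizing sheaf}.
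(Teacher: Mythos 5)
Your proposal is correct and follows essentially the same route as the paper, which derives the corollary directly from Lemmas \ref{lm:N_theta}, \ref{lm:closed immersion}, and \ref{lm:structure of curve with trivial dualizing sheaf} without further argument. Your extra bookkeeping on the second assertion --- matching the twist ambiguity of the universal family (normalized by $\cU_{2,0} \cong \cO_{N_\theta}$) against the twist ambiguity in the definition of an admissible elliptic decuple, and noting the genericity needed for \pref{lm:N_theta} --- simply makes explicit what the paper leaves implicit, and is accurate.
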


\pref{th:birational}
is an immediate consequence of
\pref{cr:isomorphism}.

\begin{remark}
Since
$
\Mell
$
is birational to $M_{1,8}$,
\pref{th:birational}
gives a new proof
for the rationality of $M_{1,8}$,
which was originally shown in \cite{MR2716762}.
It is known by \cite{MR2716762}
that
$M_{1,n}$ is rational for
$n \le 10$,
and whether it can be proved
using moduli of marked noncommutative del Pezzo surfaces
of degree $11-n$
is an interesting question.
On the other hand,
$M_{1,n}$ is irrational for $n \ge 11$
since 
$
\kappa ( M _{ 1, 11} ) = 0
$
and
$
\kappa ( M _{ 1, n} ) = 1
$
for
$
n \ge 12
$
by \cite{MR2216264}.
\end{remark}
%
%

\section{Proof of \pref{th:immersion}}
\label{sc:immersion}

A configuration of six points on $\bP^2$ in general position
can be described by a matrix
of the form
\begin{align}\label{equation:the matrix p}
  p
  \coloneqq
  \begin{bmatrix}
    1 & 1 & 1 & 1 & 0 & 0\\
    a & c & 1 & 0 & 1 & 0\\
    b & d & 1 & 0 & 0 & 1
  \end{bmatrix}
\end{align}
for
$
a, b, c, d \in \Gm
$,
where each column is a homogeneous coordinate
of a point on $\bP^2$,
four of which are normalized by the action of $\Aut \bP^2 \cong \PGL_3(\bfk)$.
This gives a description of $\Mcubic$
as an open subscheme of $(\Gm)^4$.
We write the $(i,j)$-entry
of the matrix $p$
as $p_{ij}$,
the $j$-th column of $p$ as $\bsp_j$,
and
the column vector $(x,y,z)^T$
as $\bsp$.
For
distinct
$
i, j \in \left\{ 1, 2, 3, 4, 5, 6 \right\}
$,
the defining equation of the line
passing through $\bsp_i$ and $\bsp_j$
is given by
\begin{align}
\ell _{ i j } \coloneqq | \bsp _{i} \bsp _{ j } \bsp |,
\end{align}
and
for each
$
i \in \left\{ 1, 2, 3 \right\}
$,
the defining equation of the conic through the 5 points
$
\left\{
p _{ 1 }, \dots, p _{6}\right\} \setminus \left\{ p _{ i }
\right\}
$
is given by the following equation, where each index which appears on the right hand side should be replaced by the element of the set
\(
   \left\{
      1,
      2,
      3
   \right\}
\)
congruent to it modulo
\(
   3
\)
(say,
\(
   4
\)
should be replaced with
\(
   1
\)):
\begin{multline}
  q _{ i }
  \coloneqq
    p_{3,i+1}p_{3,i+2}
\begin{vmatrix}
p_{2,i+1} & p_{2,i+2}\\
p_{1,i+1} & p_{1,i+2}\\
\end{vmatrix}
xy+
p_{1,i+1}p_{1,i+2}
\begin{vmatrix}
p_{3,i+1} & p_{3,i+2}\\
p_{2,i+1} & p_{2,i+2}\\
\end{vmatrix}yz\\
+
p_{2,i+1}p_{2,i+2}
\begin{vmatrix}
p_{1,i+1} & p_{1,i+2}\\
p_{3,i+1} & p_{3,i+2}\\
\end{vmatrix}zx
\end{multline}
For each
$
j \in \{ 0, 1, 2 \}
$,
the relation
among the three paths
from $( 1, 0 )$
to $( j, 2 )$
is given by the linear dependence
\begin{align}
s \ell _{ 1 j + 4 } q _{ 1 } + t \ell _{ 2 j + 4 } q _{ 2 } + u \ell _{ 3 j + 4 } q _{ 3 } = 0,
\end{align}
where
$(s,t,u)$
is computed
by substituting
$
(x,y,z) = ( 0, p _{ 2, j + 2 }, p _{ 2, j } )
$
and
$
(x,y,z) = ( 0, p _{ 3, j + 2 }, p _{ 3, j } )
$
as
\begin{align}
(s,t,u) = ( p _{ 1, j + 1 }, p _{ 2, j + 1 }, p _{ 3, j + 1 } ),
\end{align}
where each second index which appears on the right hand sides should be replaced by the element of the set
\(
   \left\{
      4,
      5,
      6
   \right\}
\)
congruent to it modulo
\(
   3
\)
(say,
\(
   p _{ 2, 0 }
\)
should be replaced with
\(
   p _{ 2, 6 }
\)).
Similarly, for each
$
j \in \{ 0, 1, 2 \}
$,
the relation among the three paths
from $( 2, 0 )$
to $( j, 2 )$
is given by the linear dependence
\begin{align}
s \ell _{ 2 j + 4  } \ell _{ 3 1 } + t \ell _{ 3 j + 4  } \ell _{ 1 2 } + u \ell _{ 1 j + 4 } \ell _{ 2 3 } = 0,
\end{align}
where $(s,t,u)$ is computed
by substituting
$
\bsp = \bsp _{ 1 } + \bsp _{ 2 }
$
and
$
\bsp = \bsp _{ 2 } + \bsp _{ 3 }
$
as
\begin{align}
(s,t,u) = ( 1, 1, 1 ).
\end{align}
Finally,
the relation
among the three paths
from $(0,0)$
to $(j,2)$
is given by the linear dependence
\begin{align}
s \ell _{ 3 j + 4 } + t \ell _{ 1 j + 4 } + u \ell _{ 2 j + 4 } = 0,
\end{align}
where $(s,t,u)$ is computed
by substituting
$
\bsp = \bsp _{ 1 }
$
and
$
\bsp = \bsp _{ 2 }
$
as
\begin{align}
(s,t,u) =
\left(
  | \bsp _{ 1 } \bsp _{ j + 4 } \bsp _{ 2 } |,
  | \bsp _{ 2 } \bsp _{ j + 4 } \bsp _{ 3 } |,
  | \bsp _{ 3 } \bsp _{ j + 4 } \bsp _{ 1 } |
\right).
\end{align}
The assumption that the matrix $p$ of~\eqref{equation:the matrix p} is generic implies that
all of 27 coefficients of the 9 relations are non-zero,
so that the morphism
$
\Mcubic \to \cMpot
= \ld \bA^{\scrC_3 \lb \Qt \rb} \middle/ (\Gm)^{\Qt_1} \rd
$
sending a marked cubic surface
to the corresponding potential of the quiver
factors through the substack
$
\ld (\Gm)^{\scrC_3 \lb \Qt \rb} \middle/ (\Gm)^{\Qt_1} \rd
$,
whose coarse moduli scheme is $M^\circ$.
Since a cubic surface is recovered from the derived category of coherent sheaves
and its marking is recovered from a basis of the Grothendieck group,
a marked cubic surface is recovered from the potential,
and \pref{th:immersion} is proved.

\section{AS-regular algebras from noncommutative blow-up}
\label{sc:blow-up}

Let
$
  \left( Y, \sigma, \cO _{ Y } ( H ), p _{1 }, \dots, p _{ 6 } \right)
$
be
a nonuple
consisting of
\begin{itemize}
  \item 
a smooth projective curve
\(Y\)
of genus \(1\),
 \item 
a very ample invertible sheaf
\( \cO _{ Y } ( H ) \)
of degree \(3\) on \(Y\),
\item
a translation
\(\sigma\)
of \(Y\) of infinite order,
and
\item
six points
\(
  p _{1 }, \dots, p _{ 6 } \in Y
\)
in sufficiently general position.
\end{itemize}
In \cite[Chapter 12]{MR1846352},
a noncommutative graded ring \( F \)
associated with the nonuple
$
  \left( Y, \sigma, \cO _{ Y } ( H ), p _{1 }, \dots, p _{ 6 } \right)
$
is introduced.
The idea is that
\( F \) is a flat deformation of the anti-canonical ring
of commutative cubic surfaces,
so that
\(
    \Qcoh \Xtilde
    \coloneqq
    \Qgr F
\)
has the right to be called
a noncommutative cubic surface
obtained by blowing-up
the noncommutative projective plane $X$
associated with the triple $(Y, \sigma, \cO_Y(H))$
at the six points $p_1,\ldots,p_6$.

The graded algebra \( F \) admits a regular central element
\( t \in F _{ 1 } \)
of degree \( 1 \) such that
\begin{align}\label{eq:F / t F = R ( Y, N, tau )}
  F / t F \simeq R ( Y, \normalbundle, \tau ),
\end{align}
where the right hand side is the twisted homogeneous coordinate ring of \( Y \) associated to
\begin{align}
  \tau
  &
  = \sigma ^{ 3 },\\
  \normalbundle
  &
  =
  \cO _{ Y } ( H ) \otimes \sigma ^{ * } \cO _{ Y } ( H ) \otimes \left( \sigma ^{ 2 } \right) ^{ * } \cO _{ Y } ( H ) \otimes
  \cO _{ Y } \left( - \sum _{ i = 1 } ^{ 6 } p _{ i } \right).
\end{align}
The pair \( ( \normalbundle, \tau )\) is ample, so that there exists an equivalence of categories
\begin{align}\label{eq: Qgr F / t F = Qcoh Y}
  \Qgr F / t F \simeq \Qcoh Y.
\end{align}
Hence we obtain an adjoint pair
\begin{align}\label{eq:iota * iota*}
  \iota ^{ * } \colon
  \Qcoh \Xtilde
  =
  \Qgr F \rightleftarrows \Qcoh Y \colon \iota _{ * }
\end{align}
of functors.
The functor \( \iota _{ * } \) is exact, whereas \( \iota ^{ * } \) is only right exact.
These functors restrict to the subcategories
\( \coh \Xtilde = \qgr F \) and \( \coh Y \), and we let \( \bL \iota ^{ * }\) denote the left derived functor of \( \iota ^{ * }\).

It is also shown in \cite[Chapter 12]{MR1846352} that there exists a 4-dimensional AS-regular Koszul algebra
\begin{align}\label{eq:presentation of P}
  P = \bfk \langle x _{ 1 }, x _{ 2 }, x _{ 3 }, t \rangle /
  \left(
  r ' _{ 1 }, r ' _{ 2 }, r ' _{ 3 }, t x _{ 1 } - x _{ 1 } t, t x _{ 2 } - x _{ 2 } t,
  t x _{ 3 } - x _{ 3 } t
  \right),
\end{align}
where
\begin{align}\label{eq:defining relations of P}
  r ' _{ 1 }
  =
  c x _{ 1 } ^{ 2 } + a x _{ 2 } x _{ 3 } + b x _{ 3 } x _{ 2 }
  +
  \ell _{ 1 1 } x _{ 1 } t + \ell _{ 1 2 } x _{ 2 } t + \ell _{ 1 3 } x _{ 3 } t
  +
  \alpha _{ 1 } t ^{ 2 },\\
  r ' _{ 2 }
  =
  c x _{ 2 } ^{ 2 } + a x _{ 3 } x _{ 1 } + b x _{ 1 } x _{ 3 }
  +
  \ell _{ 2 1 } x _{ 1 } t + \ell _{ 2 2 } x _{ 2 } t + \ell _{ 2 3 } x _{ 3 } t
  +
  \alpha _{ 2 } t ^{ 2 },\\
  r ' _{ 3 }
  =
  c x _{ 3 } ^{ 2 } + a x _{ 1 } x _{ 2 } + b x _{ 2 } x _{ 1 }
  +
  \ell _{ 3 1 } x _{ 1 } t + \ell _{ 3 2 } x _{ 2 } t + \ell _{ 3 3 } x _{ 3 } t
  +
  \alpha _{ 3 } t ^{ 2 },
\end{align}
for some
\begin{align}
  \ell _{ i j }, \alpha _{ i } \in \bfk.
\end{align}
The assumption that \(Y\) is nonsingular implies that \( a b c \neq 0 \).
We will assume that \( \sigma \neq \id _{ Y } \), so that \( t \in P \) is the unique central element of degree 1 up to constant.

As shown in \cite[Chapter 12]{MR1846352},
the graded ring \( P \) admits a regular normalizing element
\(
  C ' _{ 3 } \in P _{ 3 }
\)
of degree 3 such that
\(
  P / C ' _{ 3 } P \simeq F.
\)
We use this fact to compute the Serre functor of the bounded derived category
\( D ^{ b } \qgr F \).

\begin{lemma}\label{lm:balanced dualizing complex of P}
The balanced dualizing complex of \( P \) is
\( P ( - 4 ) [ 4 ] \).
\end{lemma}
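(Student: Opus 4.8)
The plan is to recognise $P$ as a noetherian connected graded Artin--Schelter regular algebra of global dimension $4$ and to read off its balanced dualizing complex from the standard structure theory, the only numerical input being the Koszul property. First I would note that $P$ is noetherian: it is a central extension of the $3$-dimensional AS-regular algebra $P/tP \cong \bfk\langle x_1, x_2, x_3 \rangle/\bigl(cx_i^2 + ax_{i+1}x_{i+2} + bx_{i+2}x_{i+1} : i \in \bZ/3\bZ\bigr)$ by the regular central element $t \in P_1$, and $3$-dimensional AS-regular algebras are noetherian. Being AS-regular, $P$ satisfies the $\chi$-condition and $R\Gamma_\mathfrak{m}$ has finite cohomological dimension, so $P$ admits a balanced dualizing complex $R_P$ by the existence theorem for balanced dualizing complexes over noncommutative connected graded algebras. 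For any noetherian connected graded AS-Gorenstein algebra $A$ of dimension $d$ with Gorenstein parameter $\ell$, characterised by $\underline{\Ext}^d_A(\bfk, A) \cong \bfk(\ell)$, this complex is $R_A \cong {}^{\nu}\!A(-\ell)[d]$ as a complex of graded bimodules, where $\nu$ is the Nakayama automorphism.

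It remains to compute $d$ and $\ell$ for $P$. Since $P$ is Koszul with Hilbert series $H_P(t) = (1-t)^{-4}$, its Koszul dual $P^{!}$ is a finite-dimensional Frobenius algebra with Hilbert series $(1+t)^4$, so the minimal graded free resolution of $\bfk_P$ is the Koszul complex
\[
  0 \to P(-4) \to P(-3)^{\oplus 4} \to P(-2)^{\oplus 6} \to P(-1)^{\oplus 4} \to P \to \bfk \to 0 .
\]
In particular $\operatorname{gldim} P = 4$, so $d = 4$. Applying $\Hom_P(-, P)$ gives a complex with terms $P, P(1)^{\oplus 4}, P(2)^{\oplus 6}, P(3)^{\oplus 4}, P(4)$ in cohomological degrees $0, \dots, 4$, whose cohomology is $\underline{\Ext}^{\bullet}_P(\bfk, P)$; by AS-regularity this is one-dimensional and concentrated in cohomological degree $4$, hence equals the cokernel of the last differential, which is the lowest graded piece $P(4)_{-4} = P_0 \cong \bfk$ because $P(3)^{\oplus 4}$ vanishes in internal degree $-4$. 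Thus $\underline{\Ext}^4_P(\bfk, P) \cong \bfk(4)$, so $\ell = 4$, and $R_P \cong {}^{\nu}\!P(-4)[4]$. One can also extract $d = 4$ and $\ell = 4$ from the functional equation $H_P(t^{-1}) = (-1)^d t^{\ell} H_P(t)$ obeyed by an AS-Gorenstein algebra, once $\operatorname{gldim} P = 4$ is known.

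The last point, which I expect to be the only one requiring genuine input rather than bookkeeping, is that the Nakayama automorphism $\nu$ is trivial, so that $R_P \cong P(-4)[4]$: the shift data $(-4)$ and $[4]$ are forced by the Hilbert series, but not the bimodule structure. I would argue this through the central element $t$, which $\nu$ fixes (it being the unique central element of degree $1$ up to scalar) and along which $\nu$ descends to the Nakayama automorphism of $P/tP$; the relations of $P/tP$ written above are the cyclic partial derivatives of the potential $\tfrac{c}{3}(x_1^3 + x_2^3 + x_3^3) + a\, x_1x_2x_3 + b\, x_1x_3x_2$, so $P/tP$ is a graded Calabi--Yau algebra of dimension $3$ and hence has trivial Nakayama automorphism, which together with the central extension structure forces $\nu = \id_P$. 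As a consistency check, descending $P \to F \to R(Y, \normalbundle, \tau)$ along the regular normalizing elements $C'_3$ and $t$ of degrees $3$ and $1$ lowers the dimension by $1$ and the Gorenstein parameter by the degree at each step, and $\Qgr R(Y, \normalbundle, \tau) \simeq \Qcoh Y$ with $Y$ elliptic gives a two-dimensional algebra with vanishing Gorenstein parameter, recovering $d = 4$ and $\ell = 4$.
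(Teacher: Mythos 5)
Your setup is correct and matches the paper's framework: $P$ is a noetherian AS-regular Koszul algebra of global dimension $4$, the Koszul resolution gives Gorenstein parameter $\ell=4$, so the balanced dualizing complex is ${}^{\nu}P(-4)[4]$ for some graded automorphism $\nu$, and the whole content of the lemma is that $\nu=\operatorname{id}$. Your reduction of $\nu$ modulo $t$ is also sound (and is essentially what the paper does, via Yekutieli's compatibility of balanced dualizing complexes with the surjection $P\to P/tP$ and the Calabi--Yau property of the $3$-dimensional Sklyanin quotient, which the paper instead verifies through Van den Bergh's Corollary~9.3 with $Q=I_3$).

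The gap is the final sentence ``which together with the central extension structure forces $\nu=\operatorname{id}_P$.'' Knowing that $\nu$ is the identity modulo $t$ only gives $\nu(x_i)=x_i+\lambda_i t$ and $\nu(t)=\mu t$ for unknown scalars $\lambda_i,\mu$; neither is determined by the quotient $P/tP$, and pinning them down is precisely where the paper has to work. The $\lambda_i$ are killed by inspecting the defining relations, and the relations determine $\mu$ only when some $\ell_{ij}\neq 0$ (the mixed terms $\ell_{ij}x_jt$ then force the right sign); if all $\ell_{ij}=0$ the relations only see $\mu$ through $\alpha_i\mu^2t^2$ and give $\mu^2=1$, which is not enough. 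The paper resolves this degenerate case by a perturbation-and-continuity argument: it puts $P$ in a flat family $P_\zeta$ over $\bA^1$ of AS-regular Koszul algebras, identifies the Nakayama automorphisms fibrewise with the automorphism coming from the Frobenius structure on the relative Koszul dual $\cP^!$, and deduces $\mu=\text{const}$ from the known value at $\zeta\neq 0$. Your proposal contains no substitute for this step; ``the central extension structure'' alone does not force $\nu(t)=t$, as the $\mu^2=1$ ambiguity in the $\ell_{ij}=0$ case shows. (Your Hilbert-series consistency check at the end confirms $(d,\ell)=(4,4)$ but says nothing about the bimodule twist, which is exactly the issue.)
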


\begin{proof}
Since \( P \) is a 4-dimensional AS-regular Koszul algebra, by~\cite[Theorem~9.2]{MR1469646},
there exits an automorphism \(\varphi\) of the graded algebra \(P \) such that the \(P\)-bimodule
\(
 P ^{\varphi \varepsilon } (-4)[4]
\)
is the balanced dualizing complex over \(P\).
Here
\( \varepsilon \) is the automorphism of \( P \) which acts on the degree \(i\) part as multiplication by \( ( - 1) ^{ i } \).
We need to show that \(\varphi = \varepsilon\).

Let
\( \varphibar \colon P / t P \simto P / t P \)
be the automorphism of \( P/ t P \) induced by \( \varphi \).
By \cite[Corollary 9.3]{MR1469646}, one can confirm that the balanced dualizing complex of
\( P / t P \) is \( P / t P ( - 3 ) [ 3 ] \). In fact, one can check that the matrix \( Q \) which appears in the assertion of \cite[Corollary 9.3]{MR1469646} is \( I _{ 3 } \).
On the other hand, by \cite[Corollary 5.5 and Proposition 5.2]{Yekutieli_DCNGA}, the balanced dualizing complex of \( P / t P \) is isomorphic to \( P / t P ( - 3 ) ^{ \varphibar \varepsilon } [ 3 ] \). Thus we see that
\begin{align}
  \varphibar ( \xbar _{ i } ) = - \xbar _{ i }
\end{align}
for
\( i = 1, 2, 3 \).
Namely, there are constants
\( \lambda _{ i } \in \bfk \) such that
\( \varphi ( x _{ i } ) = - x _{ i } + \lambda _{ i } t \)
for
\( i = 1, 2, 3 \).
Also, since \( \varphi \) respects the center of \(P\), there is a non-zero constant \( \mu \in \bfk \) such that
\( \varphi ( t ) = \mu t \). Combining these observations with the presentation \eqref{eq:presentation of P}, it is easy to conclude that
\( \lambda _{ 1 } = \lambda _{ 2 } = \lambda _{ 3 } = 0 \).
Finally, if at least one of \( \ell _{ i j } \) is non-zero, one can conclude that
\( \mu = - 1\), so that
\( \varphi = \varepsilon \). Otherwise one can perturb \(P\) to those with non-trivial \( \ell _{ i j}\)s and by continuity conclude that \( \mu = - 1\). This perturbation argument is justified as follows.

Suppose that \( P \) is an algebra as above with \( \ell _{ i j } = 0 \) for all \( i, j \).
Let
\(
  P _{ \zeta }
\),
where \( \zeta \in \bfk \), 
be the algebra obtained by replacing one of the defining relations
\( r ' _{ 1 } \)
of \( P = P _{ 0 } \) by
\( r ' _{ 1 } + \zeta x _{ 1 } t \).
Technically speaking, let
\( \cP \)
be the sheaf of graded algebras over \( \bA ^{ 1 } = \Spec \bfk [ \zeta ] \)
defined as the quotient of the free algebra
\(
  \bfk \langle x _{ 1 }, x _{ 2 }, x _{ 3 }, t \rangle \otimes _{ \bfk } \cO _{ \bA ^{ 1 } }
\)
by the subsheaf of ideals generated by the global sections
\( r ' _{ 1 } + \zeta x _{ 1 } t, r ' _{ 2 }, r ' _{ 3 }, [ t, x_{1} ], [ t, x _{ 2 }], [ t, x _{ 3 } ] \).
One can easily confirm that the fiber of \( \cP \) over a \( \bfk \)-valued point \( \zeta \in \bA ^{ 1 } ( \bfk ) \) is \( P _{ \zeta } \).
By \cite[Theorem 3.1.3]{MR1429334},
\(
  P _{ \zeta }
\)
is an AS-regular Koszul algebra for any \( \zeta\). In fact, with the notation of \cite[Theorem 3.1.3]{MR1429334}, \(x ^{* } _{ j } = x _{j }\) in our case. Hence we can find the obvious solution \( \gamma _{ 1 } = \gamma _{ 2 } = \gamma _{ 3 } = 0 \) for the system of equations \cite[p.181, (3.2)]{MR1429334}.

Consider the Koszul dual \( \cP ^{ ! } \) of \( \cP \) over \( \bA ^{ 1 } \), which is a locally free sheaf of finite dimensional algebras such that the fiber of \( \cP ^{ ! } \) over \( \zeta \) is isomorphic to
\( \left(P _{ \zeta }\right) ^{ ! } \).
By \cite[Proposition 5.10]{MR1388568},
\( \cP ^{ ! } \)
is a sheaf of Frobenius algebras and hence there is an isomorphism of
\( \cP ^{ ! } \)-bimodules
\begin{align}
  \cHom _{ \bA ^{ 1 } } \left( \cP ^{ ! }, \cO _{ \bA ^{ 1 } } \right)
  \simeq
  \left(\cP ^{ ! }\right) ^{ \Phi ^{ ! } },
\end{align}
where \( \Phi ^{ ! } \) is an automorphism of \( \cP ^{ ! } \) which by functoriality corresponds to an automorphism \( \Phi \) of \( \cP \).

Similarly, \( \left(P _{ \zeta }\right) ^{ ! } \) is a Frobenius algebra and hence there is an isomorphism of \( \left(P _{ \zeta }\right) ^{ ! } \)-bimodules
\begin{align}
  \Hom _{ \bfk } \left( \left(P _{ \zeta }\right) ^{ ! }, \bfk \right)
  \simeq
  \left(\left(P _{ \zeta }\right) ^{ ! }\right) ^{ \left( \varphi _{ \zeta } \right) ^{ ! } }
\end{align}
for an automorphism \( \varphi _{ \zeta } \) of \( P _{ \zeta } \).
One can easily confirm that
\( \Phi | _{ \zeta } = \varphi _{ \zeta } \).

On the other hand, recall that we already know that the balanced dualizing bimodule of \( P _{ \zeta }\) for \( \zeta \neq 0\) is
\( P _{ \zeta } ( - 4 ) [ 4 ] \).
Hence by \cite[Theorem 9.2]{MR1469646}, we know \( \varphi _{ \zeta} = \varepsilon\)
 for all \( \zeta \neq 0 \).
 Thus we can conclude by continuity that
 \( \varphi _{ 0 } = \Phi | _{ 0 } = \varepsilon \).
 Again by \cite[Theorem 9.2]{MR1469646}, the automorphism \( \varphi \) of \( P \) is the same as \( \varphi _{ 0 } \). Thus we conclude the proof.
\end{proof}

\begin{lemma}\label{lm:Serre functor of the noncommutative blowup}
The functor
  \(
  (-1)[2]
  \)
is a Serre functor of the bounded derived category
\( D ^{ b } \qgr F \).
\end{lemma}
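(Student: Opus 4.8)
The plan is to exhibit $\qgr F$ as the noncommutative cubic hypersurface $\proj F$ inside the noncommutative $\bP^3 = \proj P$ and to read its Serre functor off the balanced dualizing complex of $F$, which in turn is computed from that of $P$ supplied by \pref{lm:balanced dualizing complex of P}. (This is also precisely what is needed to see that $\qgr F$ is a noncommutative cubic surface in the sense of the second description in the introduction, since the polarization autoequivalence of $\qgr F$ is the twist by $\cO(1)$.)

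First I would observe that the Nakayama twist in \pref{lm:balanced dualizing complex of P} is in fact trivial: there $\varphi = \varepsilon$ and $\varepsilon^2 = \id$, so the balanced dualizing complex of $P$ is the untwisted complex $P(-4)[4]$. The regular normalizing element $C_3' \in P_3$ gives a free resolution
\begin{align}
  0 \to P(-3) \to P \to F \to 0
\end{align}
of $F$ by graded left $P$-modules, the left map being right multiplication by $C_3'$. Applying $\RHom_P(-, P(-4)[4])$ to this resolution and taking the mapping cone, I would get
\begin{align}
  R_F \coloneqq \RHom_P\bigl(F, P(-4)[4]\bigr) \cong F^{\mu}(-1)[3],
\end{align}
where $\mu$ is the automorphism of $F$ induced by the normalizing automorphism of $C_3'$ in $P$ (the trivial Nakayama automorphism of $P$ contributing nothing); the passage from $(-4)$ to $(-1)$ and from $[4]$ to $[3]$ is the bookkeeping of the two-term complex. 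Hence $F$ is AS--Gorenstein of dimension $3$ with Gorenstein parameter $1$, and it inherits the $\chi$-condition and the existence of a balanced dualizing complex from the AS-regular, and in particular strongly Noetherian, algebra $P$. Noncommutative Serre duality for the category of tails (\cite{Yekutieli_DCNGA}; see also \cite{MR1469646}) then says that $D^b\qgr F$ carries a Serre functor, namely
\begin{align}
  \bS \cong \mu_* \circ (-1)[2],
\end{align}
where $\mu_*$ is the autoequivalence of $\qgr F$ twisting the module structure by $\mu$ and $(-1)$ is the twist by $\cO_{\qgr F}(-1)$.

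The remaining point, which I expect to be the main obstacle, is to show that $\mu_*$ is the identity functor; a graded automorphism of $F$ does not act trivially on $\qgr F$ in general. I would argue that $\mu$ is a \emph{scalar} automorphism, acting on $F_i$ by a fixed scalar $\lambda^i$: the Nakayama automorphism of $P$ is trivial (a scalar automorphism with $\lambda = 1$) by the paragraph above, so it suffices to check that the normalizing automorphism of $C_3' \in P_3$ is scalar as well. This one can extract from the explicit presentation \eqref{eq:presentation of P}--\eqref{eq:defining relations of P}, using that $t \in P_1$ is, up to a scalar, the unique central element of degree $1$, so that $\mu(t)$ is proportional to $t$ and the action on $x_1, x_2, x_3$ is then forced by the relations $r_1', r_2', r_3'$. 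Once $\mu$ is scalar, for any graded $F$-module $M$ the map $m \mapsto \lambda^{\deg m} m$ is an isomorphism $M \simto M^{\mu}$ of graded modules, natural in $M$; so $\mu_* \cong \id$ on $\Module F$, a fortiori on $\qgr F$, giving $\bS \cong (-1)[2]$.

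If the hands-on computation of the normalizing automorphism of $C_3'$ proves awkward, two fallbacks are available. One can embed $P$ together with its degree-$3$ normalizing element in a flat family over $\bA^1$ — as in the perturbation argument in the proof of \pref{lm:balanced dualizing complex of P} — whose special fibre is the anti-canonical ring of a commutative cubic surface $X$, where the Serre functor of $\qgr F = \coh X$ is manifestly $\cO_X(-1) \otimes (-)[2]$ and $\mu$ is trivial, and then conclude by continuity that $\mu_*$ is trivial along the whole family. Alternatively, one can reduce modulo the central element $t$: the automorphism $\mu$ descends to $F/tF = R(Y, \normalbundle, \tau)$, hence to an autoequivalence of $\qgr F/tF \simeq \coh Y$, and since $\omega_Y \cong \cO_Y$, comparing this with the adjunction relating the dualizing sheaf of $\qgr F$ to that of the "hyperplane section" $Y$ again forces $\mu_* \cong \id$.
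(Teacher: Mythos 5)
Your overall skeleton is the same as the paper's: transfer the balanced dualizing complex \(P(-4)[4]\) of \pref{lm:balanced dualizing complex of P} along the surjection \(P \to F\) using the regular normalizing element \(C'_3 \in P_3\), conclude that the balanced dualizing complex of \(F\) is \(F(-1)[3]\), and then pass from this to the Serre functor \((-1)[2]\) on \(D^b \qgr F\) (the paper does this last step via \cite[Theorem~A.4]{MR2058456}). The genuine gap is in your treatment of the bimodule twist. You reduce everything to the claim that the automorphism \(\mu\) induced by the normalizing automorphism of \(C'_3\) is scalar, but you do not prove it: ``the action on \(x_1,x_2,x_3\) is then forced by the relations'' is not an argument, since the relations \eqref{eq:defining relations of P} admit non-scalar graded automorphisms fixing \(\bfk t\) (for instance, when all \(\ell_{ij}\) and \(\alpha_i\) vanish, the cyclic permutation of \(x_1,x_2,x_3\) is one), and \(\mu\) is a specific automorphism attached to an element \(C'_3\) that you never have in explicit form. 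Pinning down graded automorphisms of \(P\) is precisely the delicate point of \pref{lm:balanced dualizing complex of P}, which required Koszul duality, Frobenius structures, and a perturbation in a family; your fallbacks inherit the same difficulty (``continuity'' of the triviality of \(\mu_*\) along a family is not justified, and reduction modulo \(t\) only controls the induced action on \(\coh Y\), which does not by itself trivialize \(\mu_*\) on \(\qgr F\)).

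The paper sidesteps the normalizing automorphism entirely. It verifies the hypotheses of \cite[Proposition~5.2 and Corollary~5.5]{Yekutieli_DCNGA} directly: the two one-sided complexes \(\hom_{P^{\op}}(F, P(-4)[4])\) and \(\hom_{P}(F, P(-4)[4])\) are computed as shifted cones of left, respectively right, multiplication by \(C'_3\), whose cokernels are literally \(P/C'_3P = F\) and \(P/PC'_3 = F\), untwisted on the relevant side; hence the single bimodule \(S = F(-1)[3]\), with its standard structure, satisfies both one-sided conditions and is therefore the balanced dualizing complex of \(F\). In other words, the untwistedness falls out of checking the two one-sided structures separately, and no scalarity statement about any automorphism is needed. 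To salvage your route you would have to actually identify \(C'_3\) and its normalizing automorphism, or otherwise prove that the twist it induces becomes trivial on \(\qgr F\); as written, this key step is asserted rather than established.
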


\begin{proof}
    We use~\cite[Corollary~5.5]{Yekutieli_DCNGA}.
    We take the surjective homomorphism
    \(
       P
       \to
       F
    \)
    for the homomorphism
    \(
       f \colon A \to B
    \)
    of~\cite[Proposition~5.2]{Yekutieli_DCNGA},
    \( P ( - 4) [ 4 ] \)
    for
    \(
       R ^{ \bullet }
    \)
    by
    \pref{lm:balanced dualizing complex of P},
    and
    \(
        F ( - 1 ) [ 3 ]
    \)
    for
    \(
       S ^{ \bullet }
    \).
    The conditions of \cite[Proposition~5.2]{Yekutieli_DCNGA} are checked as follows:
    \begin{align}
        \hom _{ P ^{ \op } } \left( F, P ( - 4 ) [ 4 ] \right)
        &
        \simeq
        \Cone \left( P ( - 4 ) [ 4 ] \xrightarrow[]{C' _{ 3 } \cdot } P ( - 1 ) [ 4 ] \right) [ - 1 ]\\
        &
        \simeq
        \Res _{ F \otimes P ^{ \op } } F ( - 1 ) [ 3 ],\\
        \hom _{ P } \left( F, P ( - 4 ) [ 4 ] \right)
        &
        \simeq
        \Cone \left( P ( - 4 ) [ 4 ] \xrightarrow[]{\cdot C' _{ 3 } } P ( - 1 ) [ 4 ] \right) [ - 1 ]\\
        &
        \simeq
        \Res _{ P \otimes F ^{ \op } } F ( - 1 ) [ 3 ].
    \end{align}
    Thus we have confirmed that
    \(
        F ( - 1 ) [ 3 ]
    \)
    is the balanced dualizing complex of
    \(
       F
    \).
    Hence by~\cite[Theorem~A.4]{MR2058456}, the functor
    \( ( - 1) [ 2 ] \)
    is a Serre functor of the bounded derived category
    \( D ^{ b } \qgr F \).
\end{proof}

\begin{proposition}\label{pr:restriction map is an isomorphism}
Let
\( ( \cE, \cF ) \)
be an exceptional pair of the derived category
\( D ^{ b } \qgr F \).
Then the restriction map
\begin{align}
  \hom _{ \qgr F } \left( \cE, \cF \right)
  \to
  \hom _{ \coh Y } \left( \bL \iota ^{ \ast } \cE, \bL \iota ^{ \ast } \cF \right)
\end{align}
is an isomorphism.
\end{proposition}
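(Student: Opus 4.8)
The plan is to identify the restriction map with the map induced on $\hom_{\qgr F}(\cE,-)$ by the unit of the adjunction $\bL\iota^{\ast}\dashv\iota_{\ast}$, and then to read off the isomorphism from the resulting long exact sequence together with a Serre-duality vanishing furnished by \pref{lm:Serre functor of the noncommutative blowup}. First I would record the fundamental triangle attached to the divisor $Y\subset\Xtilde$: since $t\in F_{1}$ is a regular central element, the two-term complex $\ld F(-1)\xto{t}F\rd$ is a resolution of $F/tF$ by free graded left $F$-modules, so for every $\cG\in D^{b}\qgr F$ one has $\iota_{\ast}\bL\iota^{\ast}\cG\simeq\cG\otimes^{\bL}_{F}F/tF$ and a functorial distinguished triangle
\begin{align}
  \cG(-1)\xto{t}\cG\xto{\eta_{\cG}}\iota_{\ast}\bL\iota^{\ast}\cG\to\cG(-1)[1]
\end{align}
in $D^{b}\qgr F$, whose second arrow $\eta_{\cG}$ is the unit of $\bL\iota^{\ast}\dashv\iota_{\ast}$ (here $\iota_{\ast}=\bR\iota_{\ast}$ as $\iota_{\ast}$ is exact, and $\bL\iota^{\ast}$ has finite Tor-dimension by the same two-term resolution, so both functors preserve boundedness and coherence). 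The derived adjunction then gives, naturally in $i\in\bZ$, an isomorphism
\begin{align}
  \Hom^{i}_{\coh Y}\lb\bL\iota^{\ast}\cE,\bL\iota^{\ast}\cF\rb
  \cong
  \Hom^{i}_{\qgr F}\lb\cE,\iota_{\ast}\bL\iota^{\ast}\cF\rb,
\end{align}
under which the map induced by $\bL\iota^{\ast}$ on morphisms — that is, the restriction map of the statement — becomes $\Hom^{i}(\cE,\eta_{\cF})$.

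Next I would apply $\hom_{\qgr F}(\cE,-)$ to the triangle with $\cG=\cF$ to obtain the long exact sequence
\begin{align}
  \cdots\to\Hom^{i}(\cE,\cF(-1))\to\Hom^{i}(\cE,\cF)\to\Hom^{i}(\cE,\iota_{\ast}\bL\iota^{\ast}\cF)\to\Hom^{i+1}(\cE,\cF(-1))\to\cdots,
\end{align}
whose middle arrow is $\Hom^{i}(\cE,\eta_{\cF})$. The crux is the vanishing $\Hom^{i}(\cE,\cF(-1))=0$ for all $i\in\bZ$, and this is precisely where \pref{lm:Serre functor of the noncommutative blowup} enters: with Serre functor $S=(-1)[2]$, Serre duality gives
\begin{align}
  \Hom^{i}(\cE,\cF(-1))\cong\Hom\lb\cF(-1)[i],\cE(-1)[2]\rb^{\dual}\cong\Hom^{2-i}(\cF,\cE)^{\dual},
\end{align}
which vanishes since $(\cE,\cF)$ is an exceptional pair, i.e.\ $\hom(\cF,\cE)\simeq0$. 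Feeding this into the long exact sequence shows that $\Hom^{i}(\cE,\eta_{\cF})$ is an isomorphism for every $i$; the case $i=0$ is the assertion.

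I do not expect a genuine obstacle here: the mathematical content has already been concentrated into the computation of the balanced dualizing complex and of the Serre functor (\pref{lm:balanced dualizing complex of P} and \pref{lm:Serre functor of the noncommutative blowup}) and into the ampleness of $(\normalbundle,\tau)$ that yields $\Qgr F/tF\simeq\Qcoh Y$. The only points requiring some care are that the cone triangle above genuinely carries the adjunction unit, so that $\Hom(\cE,\eta_{\cF})$ really is the restriction map, and that $\bL\iota^{\ast}$ and $\iota_{\ast}$ restrict to the bounded derived categories of coherent objects — both immediate from the two-term resolution $\ld F(-1)\xto{t}F\rd$.
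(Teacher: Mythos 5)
Your proof is correct and follows essentially the same route as the paper: the distinguished triangle $\cF(-1)\to\cF\to\iota_{*}\bL\iota^{*}\cF$ coming from the regular central element $t$, the adjunction identifying the third term's homs with $\hom_{Y}(\bL\iota^{*}\cE,\bL\iota^{*}\cF)$, and the vanishing of $\hom(\cE,\cF(-1))$ via the Serre functor $(-1)[2]$ and exceptionality of the pair. Your extra care in checking that the map in the triangle is the adjunction unit (so that the induced map really is the restriction map) is a point the paper leaves implicit, but the argument is the same.
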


\begin{proof}
    As we saw in~\eqref{eq:F / t F = R ( Y, N, tau )}, we have the following short exact sequence of graded
    \(
       F
    \)-bimodules:
    \begin{align}
        0
        \to
        F ( - 1 )
        \xto{
            t \cdot
        }
        F
        \to
        \frac{
            F
        }{
            t F
        }
        \simeq
        R
        \left(
            Y,
            \normalbundle,
            \tau
        \right)
        \to
        0
    \end{align}
    Combining this with~\eqref{eq:iota * iota*},
    for any object \( \cF \in D ^{ b } \qgr F \) we obtain the following distinguished triangle:
    \begin{align}
        \cF ( - 1 )
        \to
        \cF
        \to
        \iota _{ * } \bL \iota ^{ * } \cF
        \xto{ + 1 }
    \end{align}
    Applying
    \( \hom _{ \Xt } \left( \cE, - \right) \)
    to this, we obtain the distinguished triangle as follows:
    \begin{align}
        \begin{aligned}
            \hom _{ \Xt } \left( \cE, \cF ( - 1 ) \right)
            \to
            \hom _{ \Xt } \left( \cE, \cF \right)
            \to
            \hom _{ \Xt } \left( \cE, \iota _{ * } \bL \iota ^{ * } \cF \right)
            \simeq
            \hom _{ Y } \left( \bL \iota ^{ \ast } \cE, \bL \iota ^{ \ast } \cF \right)
            \xto{+1}
        \end{aligned}
    \end{align}
    By \pref{lm:Serre functor of the noncommutative blowup}, we see that
    \begin{align}
        \hom _{ \Xt } \left( \cE, \cF ( - 1 ) \right)
        =
        \hom _{ \Xt } \left( \cE, \bS \left(\cF\right) [ - 2 ] \right)
        \simeq
        \hom _{ \Xt } \left( \cF, \cE \right) ^{ \vee } [ - 2 ]
        =
        0,
    \end{align}
    where
    \( \bS \)
    denotes the Serre functor of the bounded derived category \(D ^{ b } \coh \Xt\).
\end{proof}

By repeatedly using \cite[Theorem~8.4.1]{MR1846352} one obtains a semiorthogonal decomposition
\begin{align}\label{eq:Orlov type SOD}
    D ^{ b } \coh \Xtilde
    =
  \langle
  \cO _{ l _{ 1 } } ( - 1 ), \dots, \cO _{l _{ 6 }} ( - 1 ), D ^{ b } \coh X
  \rangle,
\end{align}
where
\(
  \cO _{ l _{ 1 } } ( - 1 ), \dots, \cO _{l _{ 6 }} ( - 1 )
\)
are exceptional objects.
Moreover,
\cite[p.~84]{MR1846352} gives the short exact sequence as follows.
\begin{align}
  0
  \to
  \cO _{ l _{ i } } ( - 1 )
  \to
  \cO _{ l _{ i } }
  \to
  \iota _{ * } \cO _{ \tau p _{ i } }
  \to
  0
\end{align}
Combining this with the isomorphism of functors
\begin{align}
  ( 1 ) \simeq \left(\normalbundle\right) _{ \tau }
  \coloneqq
  \tau _{ * } \left( - \otimes \normalbundle \right)
\end{align}
under the equivalence
\begin{align}
  \qgr R ( Y, \normalbundle, \tau )
  \simeq
  \coh Y,
\end{align}
we conclude that
\begin{align}
  \iota ^{ * } \cO _{ l _{ i } } ( -1 ) \simeq \cO _{ p _{ i } }.
\end{align}
Moreover we see that
\(
    \iota ^{ * } \cO _{ l _{ i } } ( -1 )
    \simeq
    \bL \iota ^{ * } \cO _{ l _{ i } } ( -1 )
\),
since the derived restriction \( \bL \iota ^{ * } \cO _{ l _{ i } } ( -1 ) \) of the exceptional object
\( \cO _{ l _{ i } } ( -1 ) \)
is a spherical object on \( Y \)
by the proof of \pref{pr:restriction map is an isomorphism},
and a spherical (and hence simple) object of $D^b \coh Y$
is a shift of a sheaf.

The collection
\pref{eq:key collection}
is obtained from
the Orlov type collection \cite{Orlov_PB}
\begin{align}\label{eq:Orlov type collection}
\tau_1
=
(E_i)_{i=1}^9
=
\lb
\cO _{ l _{ 1 } } ( - 1 ), \dots, \cO _{ l _{ 6 } } ( - 1 ),
\cO ( -2 l ), \cO ( - l ), \cO
\rb
\end{align}
by the following sequence of mutations:
\begin{align}
\tau_2
&= L_1 \circ L_2 \circ L_3 \circ L_4 \circ L_5 \circ L_6 (\tau_1) \\
&= \big( \cO(-2l+l_1+\cdots+l_6),\cO_{l_1}(-1),\ldots,\cO_{l_6}(-1), \cO(-l), \cO \big), \\
\tau_3
&= R_5 \circ R_6 \circ R_7 (\tau_2) \\
&= \big( \cO(-2l+l_1+\cdots+l_6),\cO_{l_1}(-1),\cO_{l_2}(-1), \cO_{l_3}(-1), \cO(-l)\\
&\qquad \qquad \qquad \qquad \cO(l_4-l), \cO(l_5-l), \cO(l_6-l), \cO \big), \\
\tau_4
&= R_6 \circ R_7 \circ R_8 (\tau_3) \\
&= \big( \cO(-2l+l_1+\cdots+l_6),\cO_{l_1}(-1),\cO_{l_2}(-1), \cO_{l_3}(-1), \cO(-l), \\
&\qquad \qquad \qquad \qquad \cO, \cO(l-l_4), \cO(l-l_5), \cO(l-l_6) \big), \\
\tau_5
&= L_2 \circ L_3 \circ L_4 (\tau_4) \\
&= \big( \cO(-2l+l_1+\cdots+l_6), \cO(-l+l_1+l_2+l_3), \cO_{l_1}(-1),\cO_{l_2}(-1), \cO_{l_3}(-1),\\
&\qquad \qquad \qquad \qquad \cO, \cO(l-l_4), \cO(l-l_5), \cO(l-l_6) \big), \\
\tau_6
&= R_3 \circ R_4 \circ R_5 (\tau_5) \\
&= \big( \cO(-2l+l_1+\cdots+l_6), \cO(-l+l_1+l_2+l_3), \cO, \cO(l_1), \cO(l_2), \cO(l_3), \\
&\qquad \qquad \qquad \qquad \cO(l-l_4), \cO(l-l_5), \cO(l-l_6) \big)
\\
\tau_7
&= L _{ 1 } \circ L _{ 2 } (\tau_6) \\
&= \big( \cO, \cO(-2l+l_1+\cdots+l_6), \cO(-l+l_1+l_2+l_3), \cO(l_1), \cO(l_2), \cO(l_3), \\
&\qquad \qquad \qquad \qquad \cO(l-l_4), \cO(l-l_5), \cO(l-l_6) \big).
\end{align}

The noncommutative projective plane
\(
   X
\)
admits a semiorthogonal decomposition of the form
\begin{align}
    D ^{ b } \coh X
    =
   \langle
     \cO ( - 2 l ),
     \cO ( - l ),
     \cO
   \rangle,
\end{align}
which together with~\eqref{eq:Orlov type SOD} yields a full exceptional collection of
\(
   D ^{ b } \coh \Xt
\)
of the form~\eqref{eq:Orlov type collection}. By abuse of notation we let
\(
   \tau _{ 1 }
\)
denote thus-obtained full exceptional collection of
\(
   D ^{ b } \coh \Xt
\),
and
\(
   \tau _{ i }
\)
denote those obtained from
\(
   \tau _{ 1 }
\)
by the same sequence of mutations as in the commutative case.

Note that the restriction of $\tau_1$ gives the spherical collection
\begin{align}
\sigma_1
=
(\cO_{p_1}, \ldots, \cO_{p_6}, L_0^{-5} L_1^3, L_0^{-2} L_1, \cO)
\end{align}
in $D^b \coh E$,
where
\(
   L _{ 0 }
   =
   \cO _{ Y } ( H )
\)
and
\(
   L _{ 1 }
   =
   \sigma ^{ \ast }
   L _{ 0 }
\).
Since mutation commutes with restriction by~\pref{pr:restriction map is an isomorphism}, the restrictions $\sigma_k$ of $\tau_k$ are given by
\begin{align}
\sigma_2
&= \big(
L_0^{-5} L_1^3(p_1+\cdots+p_6),
\cO_{p_1}, \ldots, \cO_{p_6}, L_0^{-2} L_1, \cO
\big),
\end{align}
\begin{multline}
\sigma_3
= \big(
L_0^{-5} L_1^3(p_1+\cdots+p_6),
\cO_{p_1}, \cO_{p_2}, \cO_{p_3}, L_0^{-2} L_1, \\
L_0^{-2} L_1(p_4), L_0^{-2} L_1(p_5), L_0^{-2} L_1(p_6), \cO
\big),
\end{multline}
\begin{multline}
\sigma_4
= \big(
L_0^{-5} L_1^3(p_1+\cdots+p_6),
\cO_{p_1}, \cO_{p_2}, \cO_{p_3}, L_0^{-2} L_1, \\
\cO, L_0^{2} L_1^{-1}(-p_4), L_0^{2} L_1^{-1}(-p_5), L_0^{2} L_1^{-1}(-p_6)
\big),
\end{multline}
\begin{multline}
\sigma_5
= \big(
L_0^{-5} L_1^3(p_1+\cdots+p_6),
L_0^{-2} L_1(p_1+p_2+p_3),
\cO_{p_1}, \cO_{p_2}, \cO_{p_3}, \\
\cO, L_0^{2} L_1^{-1}(-p_4), L_0^{2} L_1^{-1}(-p_5), L_0^{2} L_1^{-1}(-p_6)
\big),
\end{multline}
\begin{multline}
\sigma_6
= \big(
L_0^{-5} L_1^3(p_1+\cdots+p_6),
L_0^{-2} L_1(p_1+p_2+p_3),
\cO,
\cO(p_1), \cO(p_2), \cO(p_3), \\
L_0^{2} L_1^{-1}(-p_4), L_0^{2} L_1^{-1}(-p_5), L_0^{2} L_1^{-1}(-p_6)
\big)
\end{multline}
\begin{multline}
\sigma_7
= \big(
    \cO,
    L_0^{-5} L_1^3(p_1+\cdots+p_6),
    L_0^{-2} L_1(p_1+p_2+p_3),
    \cO(p_1), \cO(p_2), \cO(p_3), \\
    L_0^{2} L_1^{-1}(-p_4), L_0^{2} L_1^{-1}(-p_5), L_0^{2} L_1^{-1}(-p_6)
\big).\label{eq:sigma7}
\end{multline}

Putting everything together, we obtain the following:
\begin{theorem}
    One has a derived equivalence
    \begin{align} \label{eq:Db qgr A=Db nc blow-up}
        D ^{ b } \coh \Xt
        \simeq
        D ^{ b } \qgr A
    \end{align}
   where
    \(
       A
    \)
    is the AS-regular
    \(
       \left\{
          0,
          1,
          2
       \right\}
       \times
       \bZ
    \)-algebra associated with the elliptic decuple
    $(Y, \sigma_7)$.
\end{theorem}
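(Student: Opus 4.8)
The plan is to recognise the mutated collection $\tau_7$, which by construction is the three-block collection \eqref{eq:key collection}, as a \emph{full strong} exceptional collection of $D^b\coh\Xt$ whose total morphism algebra is $\bfk Q/I$, and then to invoke \cite{2007.07620} to identify $\bfk Q/I$ with the endomorphism algebra of the full strong exceptional collection that generates $D^b\qgr A$.

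First I would observe that $\tau_1$ of \eqref{eq:Orlov type collection} is a full exceptional collection of $D^b\coh\Xt$: it is precisely the concatenation of the semiorthogonal decomposition \eqref{eq:Orlov type SOD} with $D^b\coh X=\langle\cO(-2l),\cO(-l),\cO\rangle$. Since left and right mutations take full exceptional collections to full exceptional collections, each $\tau_k$ is again one, so in particular $\tau_7=\eqref{eq:key collection}$ is a full exceptional collection of $D^b\coh\Xt$.

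Next I would prove that $\tau_7$ is strong and compute its total morphism algebra by means of \pref{pr:restriction map is an isomorphism}. For any two members $\cE,\cE'$ of $\tau_7$ with $\cE$ to the left of $\cE'$ the pair $(\cE,\cE')$ is exceptional, so the proposition yields an isomorphism $\hom_\Xt(\cE,\cE')\simto\hom_Y(\bL\iota^{\ast}\cE,\bL\iota^{\ast}\cE')$; since mutation commutes with $\bL\iota^{\ast}$, the restriction of $\tau_7$ is the collection $\sigma_7$ of \eqref{eq:sigma7}, whose entries are the line bundles $L_v$ with $\deg L_{i,j}=j$ recorded there, and the genericity of the nonuple makes $(Y,(L_v)_{v\in Q_0})$ an admissible elliptic decuple in the sense of \pref{df:admissible}. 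On the genus-one curve $Y$ the complex $\hom_Y(L_v,L_w)\simeq R\Gamma(Y,L_w\otimes L_v^{-1})$ is concentrated in degree zero whenever $\deg L_w>\deg L_v$ and vanishes whenever $\deg L_w=\deg L_v$ by admissibility; as $\cE$ to the left of $\cE'$ forces $\deg L_{\cE}\le\deg L_{\cE'}$, and $\hom_\Xt(\cE',\cE)=0$ in that case by exceptionality, the collection $\tau_7$ is strong. Hence $D^b\coh\Xt\simeq D^b\module\bfk\Gamma$ with $\bfk\Gamma=\End_\Xt(\bigoplus_v E_v)\cong\End_Y(\bigoplus_v L_v)\cong\bfk Q/I$, where $I=\ker(\bfk Q\to\End_Y(\bigoplus_v L_v))$ is the relations ideal of the decuple and satisfies \eqref{eq:dimension matrix}. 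Finally, by the main result of \cite{2007.07620} together with \pref{lm:decuple to spherical helix}, this admissible elliptic decuple $(Y,\sigma_7)$ gives the AS-regular $\tbz$-algebra $A$ of type $\Qt$, for which the very same algebra $\bfk Q/I$ is the total morphism algebra of a full strong exceptional collection generating $D^b\qgr A$; thus $D^b\qgr A\simeq D^b\module\bfk Q/I$, and splicing the two equivalences gives \eqref{eq:Db qgr A=Db nc blow-up}.

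The step I expect to be the main obstacle is the strongness of $\tau_7$ on the genuinely noncommutative $\Xt$: one cannot simply deform from the commutative cubic surface, so the argument must pass through \pref{pr:restriction map is an isomorphism}, which applies only to exceptional pairs, together with the facts that each $\bL\iota^{\ast}E_v$ is an honest line bundle of the recorded degree and that the resulting decuple is admissible. A secondary point requiring care is the precise meaning of ``$\bfk Q/I$ is recovered from $A$'', i.e.\ the fullness of the exceptional collection inside $D^b\qgr A$, for which one must quote the full strength of \cite{2007.07620}.
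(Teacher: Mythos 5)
Your proposal is correct and follows essentially the same route as the paper: the equivalence is the composite of $D^b\coh\Xt\simeq D^b\module\bfk\Gamma$ coming from the full strong exceptional collection $\tau_7$ and $D^b\module\bfk\Gamma\simeq D^b\qgr A$ coming from the full strong exceptional collection in $D^b\qgr A$ associated with the admissible elliptic decuple $\sigma_7$. You spell out the strongness of $\tau_7$ via \pref{pr:restriction map is an isomorphism} and the degree computation on $Y$, which the paper leaves implicit, and you cite \cite{2007.07620} where the paper invokes \cite[Theorem~16.(i)]{Orlov2009} for the collection of images of projectives in $D^b\qgr A$; these are cosmetic rather than substantive differences.
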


\begin{proof}
Let $\Gamma = (Q, I)$ be the quiver in \pref{fg:key quiver}
with the relation $I$ determined by the elliptic decuple.
The equivalence \pref{eq:Db qgr A=Db nc blow-up}
is the composite of
the equivalence
\begin{align}
   D^b \coh \Xt \simeq D^b \module \bfk \Gamma
\end{align}
coming from the full strong exceptional collection
$\tau_7$,
and
the equivalence
\begin{align}
D^b \module \bfk \Gamma
\simeq
D^b \qgr A
\end{align}
coming from the full strong exceptional collection
\begin{align}
\lb
\cO(0,0), \cO(1,0), \cO(2,0),
\cO(0,1), \cO(1,1), \cO(2,1),
\cO(0,2), \cO(1,2), \cO(2,2)
\rb
\end{align}
obtained by
\cite[Theorem 16.(i)]{Orlov2009},
where
$\cO(i,j)$ is the image of the projective $A$-module $P_{i,j}$
by the natural functor $\module A \to \qgr A$.
\end{proof}

%
%
\appendix

%
%
\section{Connectedness of the moduli of representations}
\label{sc:connectedness}

We explain how to check that the moduli space
\(
   N
   _{
    \theta
   }
   ( I _{ 0 } )
\)
of stable representations for the generic stability condition
\(
   \theta
\)
as in~\eqref{eq:the special theta} and the toric relations
\(
   I _{ 0 }
\)
as in~\eqref{eq:toric relations} satisfies
\(
   h ^{ 0 }
   \left(
    N
    _{
     \theta
    }
    ( I _{ 0 } ),
    \cO
    _{
        N
        _{
         \theta
        }
        ( I _{ 0 } )
    }
   \right)
   =
   1
\)
by using the Macaulay2 script \lstinline|connectedness.m2|
in the ancillary file to this paper.

As we saw in~\pref{sc:N}, the moduli space
\(
   N
   _{
    \theta
   }
   ( I _{ 0 } )
\)
is isomorphic to the closed subscheme of the GIT quotient~\eqref{eq:GIT quotient} defined by the ideal
\(
   I _{ 0 }
\)
in a suitable fashion. The
\(
   18
   \times
   9
\)
matrix of weights of the action
\(
   \Gm
   ^{
    Q _{ 0 }
   }
   \curvearrowright
   \bA
   ^{
    Q _{ 1 }
   }
\)
is given in the beginning of \lstinline|connectedness.m2| as follows.

\begin{lstlisting}
    Weight = {
        {-1,0,0,1,0,0,0,0,0},{-1,0,0,0,1,0,0,0,0},{-1,0,0,0,0,1,0,0,0},
        {0,-1,0,0,1,0,0,0,0},{0,-1,0,0,0,1,0,0,0},{0,-1,0,1,0,0,0,0,0},
        {0,0,-1,0,0,1,0,0,0},{0,0,-1,1,0,0,0,0,0},{0,0,-1,0,1,0,0,0,0},
        {0,0,0,-1,0,0,1,0,0},{0,0,0,-1,0,0,0,1,0},{0,0,0,-1,0,0,0,0,1},
        {0,0,0,0,-1,0,0,1,0},{0,0,0,0,-1,0,0,0,1},{0,0,0,0,-1,0,1,0,0},
        {0,0,0,0,0,-1,0,0,1},{0,0,0,0,0,-1,1,0,0},{0,0,0,0,0,-1,0,1,0}
        };
    
    WeightMatrix = matrix(Weight);    
\end{lstlisting}

First we compute the defining ideal of the
\(
   \theta
\)-unstable locus in
\(
   \bA ^{ Q _{ 1 } }
\).
For this purpose we use a Hilbert-Mumford criterion for stability (cf., e.g., \cite[Proposition~6.3]{MR3466868}).
Recall that we have no strictly semistable points due to the genericity of
\(
   \theta
\).
In what follows we identify
\(
   Q _{ 1 }
\)
with the set of standard coordinates of the affine space
\(
   \bA
   ^{
    Q _{ 1 }
   }
\).

\begin{lemma}\label{lm:Hilbert-Mumford}
    Given a point of
    \(
       \bA ^{ Q _{ 1 } }
    \),
    let
    \(
       S
       \subseteq
       Q _{ 1 }
    \)
    be the set of coordinates which are non-zero at the point. Then the point is semistable for a character
    \(
       \chi
    \)
    if and only if the cone spanned by the weights of the coordinates in
    \(
       S
    \)
    contains
    \(
       \chi
    \).
    Moreover, it is stable if and only if the interior of the cone contains
    \(
       \chi
    \).
\end{lemma}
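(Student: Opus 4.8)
The plan is to read both claims off the definition of geometric invariant theory (semi)stability for the torus $\sfK$ acting on $\bA^{Q_1}$, together with the biduality of rational polyhedral cones. Throughout I identify $Q_1$ with the standard coordinates of $\bA^{Q_1}$, write $w_a \coloneqq \phi^*(\bfe_a) \in \sfL^\dual$ for the weight of the coordinate $x_a$ (the row of the weight matrix of \pref{sc:connectedness} indexed by $a$, viewed inside $\sfL^\dual \cong \lc \theta \in \bZ^{Q_0} \relmid \sum_v \theta_v = 0 \rc$), so that $w_a = \bfe_{t(a)} - \bfe_{s(a)}$, and put $C_S \coloneqq \Cone \lc w_a \relmid a \in S \rc \subseteq \sfL^\dual \otimes \bR$. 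A one-parameter subgroup of $\sfK$ is an element $\lambda \in \sfL$, and $\lambda(s)$ scales $x_a$ by $s^{\la w_a, \lambda \ra}$; hence $\lim_{s \to 0} \lambda(s) \cdot x$ exists in $\bA^{Q_1}$ if and only if $\la w_a, \lambda \ra \ge 0$ for all $a \in S$, i.e.\ if and only if $\lambda \in C_S^\vee$.

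For semistability I would argue directly from the definition, with no appeal to the Hilbert--Mumford criterion: the point $x$ is $\chi$-semistable precisely when some power $\chi^m$, $m \ge 1$, carries a semi-invariant regular function on $\bA^{Q_1}$ not vanishing at $x$. Such semi-invariants are spanned by the monomials $\prod_a x_a^{m_a}$ with $m_a \in \bZ_{\ge 0}$ and $\sum_a m_a w_a = m\chi$, and such a monomial is non-zero at $x$ exactly when its support lies in $S$. So an $f$ as above exists if and only if $m\chi \in \Cone \lc w_a \relmid a \in S \rc$ for some $m \ge 1$, equivalently $\chi \in C_S$. This settles the first assertion.

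For stability I would invoke the Hilbert--Mumford criterion for affine GIT recalled in \cite[Proposition~6.3]{MR3466868} (see also \cite{King}): a $\chi$-semistable point $x$ is $\chi$-stable if and only if $\la \chi, \lambda \ra > 0$ for every non-zero $\lambda \in \sfL$ along which $\lim_{s \to 0} \lambda(s) \cdot x$ exists. By the first paragraph such $\lambda$ are exactly the lattice points of $C_S^\vee$, so the condition reads $\la \chi, \lambda \ra > 0$ for all $\lambda \in C_S^\vee \setminus \{0\}$. If $C_S$ is not full-dimensional then $C_S^\vee$ contains a line and this fails, so stability forces $C_S$ to span $\sfL^\dual \otimes \bR$ --- which is precisely the condition that the stabilizer $\bigcap_{a \in S} \ker w_a$ of $x$ in $\sfK$ be finite --- and for a full-dimensional $C_S$ the cone $C_S^\vee$ is strongly convex and the displayed condition says exactly that $\chi$ lies in the interior of $(C_S^\vee)^\vee = C_S$, by biduality of closed convex polyhedral cones. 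This settles the second assertion.

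The main work is conventional bookkeeping rather than mathematics. One must fix the sign of the Hilbert--Mumford numerical function, equivalently the precise linearization, so that $\chi$-(semi)stability agrees with the $\theta$-(semi)stability of \cite{King} used in \pref{sc:N}; the semi-invariant description of the semistable locus already pins this down, and it is what makes the answer $\chi \in C_S$ rather than $-\chi \in C_S$. One should also check that replacing one-parameter subgroups of $\sfK$ by those of $(\Gm)^{Q_0}$ is harmless: the diagonal cocharacter $\bsone$ pairs to $0$ with every $w_a = \bfe_{t(a)} - \bfe_{s(a)}$, and, since $\sum_v \chi_v = 0$, with $\chi$ as well, so the diagonal acts trivially and drops out of all the inequalities above. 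Finally, I would note that genericity of $\chi$ is not used in the proof of the lemma itself; it only ensures, consistently with the above, that $\chi$ never lies on $\partial C_S$ for any $S$, so that the semistable and stable loci coincide.
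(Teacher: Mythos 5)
Your proof is correct, and it is exactly the argument the paper leaves implicit: the lemma is stated without proof, with only a citation to the toric Hilbert--Mumford criterion \cite[Proposition~6.3]{MR3466868}, and your derivation (semistability via monomial semi-invariants of weight $m\chi$ supported in $S$, stability via one-parameter subgroups in $C_S^\vee$ and biduality of rational polyhedral cones) is a correct self-contained verification of precisely that criterion, with the sign conventions matching the weight matrix $w_a = \bfe_{t(a)} - \bfe_{s(a)}$ used in \lstinline|connectedness.m2|. The only point worth making explicit is the standard one you gloss over in the stability step: since $C_S^\vee$ is a rational polyhedral cone, positivity of $\la \chi, - \ra$ on its non-zero lattice points (the actual one-parameter subgroups) is equivalent to positivity on all of $C_S^\vee \setminus \{0\}$, because the extremal rays and the lineality space of $C_S^\vee$ are rational.
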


Recall that the weights of the coordinates and the character
\(
   \theta
\)
are all contained in the 8-dimensional lattice
\(
   \sfL ^{ \dual }
\).
By definition the defining ideal of the
\(
   \theta
\)-unstable locus in the coordinate ring
\(
   \bfk
   \left[
    \bA
    ^{
        Q _{ 1 }
    }
   \right]
\)
is generated by
\(
   \theta
\)-semi-invariants; note that such a semi-invariant is a linear combination of products of coordinates such that the weight of each product is a positive multiple of
\(
   \theta
\).
Combining this with~\pref{lm:Hilbert-Mumford} and the genericity of
\(
   \theta
\),
we can conclude that the defining ideal of the unstable locus is, up to possibly taking the radical, generated by products of octuples of coordinates satisfying the condition that the cone spanned by their weights contains
\(
   \theta
\).

The following part of \lstinline|connectedness.m2| computes the list \lstinline|Bases| of octuples of coordinates such that the cone spanned by their weights has non-empty interior, and then the sublist \lstinline|RelevantCones| by the condition that the character
\(
   \theta
\),
which is denoted by \lstinline|p| below, is contained in the cone. Then we turn it into the set \lstinline|Generators| of monomials and then let \lstinline|IrrelevantIdeal| be the radical of the ideal generated by \lstinline|Generators|. This is the defining ideal of the
\(
   \theta
\)-unstable locus.

\begin{lstlisting}
    Eights = subsets(for i in toList(0..17) list i,8);

    Bases = select(
        Eights,
        S -> rank (submatrix(transpose WeightMatrix, S)) == 8
        );

    loadPackage "Polyhedra";

    p = transpose matrix{{-11,-11,-11,3,3,6,7,7,7}};

    RelevantCones = select(
        Bases,
        S -> contains(
            posHull submatrix(transpose WeightMatrix, S),
            p
        )
        );

    R = QQ[x_0 .. x_17,Degrees=>Weight];

    Generators = for S in RelevantCones list(
        product(
                for j from 0 to 7 list(
                    x_(S_j)
                )
            )
    );

    IrrelevantIdeal = radical monomialIdeal Generators;
\end{lstlisting}

Note that the moduli space
\(
   N
   _{
    \theta
   }
   ( I _{ 0 } )
\)
is (the base change to \(\bfk\) of) a good quotient of
\(
    V
    \left(
        \mbox{\lstinline|DefiningIdeal|}
    \right)
    \cap
    \left(
        \bA
        ^{
         Q _{ 1 }
        }
    \right)
    ^{
        \thetass
    }
    \hookrightarrow
   \bA
   ^{
    Q _{ 1 }
   }
   =
   \Spec
   \bQ
   \left[
    x _{ 0 },
    \dots,
    x _{ 17 }
   \right]
\),
where the monomial ideal \lstinline|DefiningIdeal| is defined as follows.

\begin{lstlisting}
    DefiningIdeal = ideal(
    x_0*x_9,
    x_1*x_12,
    x_2*x_15,
    x_3*x_14,
    x_4*x_17,
    x_5*x_11,
    x_6*x_16,
    x_7*x_10,
    x_8*x_13
    );
\end{lstlisting}
As the closed subscheme
\(
    V
    \left(
        \mbox{\lstinline|DefiningIdeal|}
    \right)
    \hookrightarrow
   \bA
   ^{
    Q _{ 1 }
   }
\)
is the union of coordinate linear subspaces, the assertion
\(
   h ^{ 0 }
   \left(
    N
    _{
     \theta
    }
    ( I _{ 0 } ),
    \cO
    _{
        N
        _{
         \theta
        }
        ( I _{ 0 } )
    }
   \right)
   =
   1
\)
is equivalent to the connectedness of the intersection
\(
    V
    \left(
        \mbox{\lstinline|DefiningIdeal|}
    \right)
    \cap
    \left(
        \bA
        ^{
         Q _{ 1 }
        }
    \right)
    ^{
        \thetass
    }
\).

The following lines compute the set \lstinline|Primes| of minimal prime ideals of \lstinline|DefiningIdeal| and then its subset \lstinline|IrreducibleComponents| of prime ideals
\(
   P
\)
which does not contain the defining ideal \lstinline|IrrelevantIdeal| of the unstable locus. Note that \lstinline|IrreducibleComponents| is in bijection with the set of irreducible components of
\(
    N
    _{
     \theta
    }
    ( I _{ 0 } )
\).

\begin{lstlisting}
    Primes = minimalPrimes DefiningIdeal;

    IrreducibleComponents = select(Primes, P -> not isSubset(IrrelevantIdeal,P));
\end{lstlisting}
There are 18 elements in \lstinline|IrreducibleComponents|.

To conclude the proof we show that the incidence graph of the irreducible components of the moduli space
\(
    N
    _{
     \theta
    }
    ( I _{ 0 } )
\)
is connected. Concretely, we define the simple graph
\lstinline|g|
whose set of vertices is
\lstinline|IrreducibleComponents|
and the vertices corresponding to
\(
    P,
    P '
    \in
\)
\lstinline|IrreducibleComponents| are connected by an edge if and only if
\(
    P + P '
\)
does not contain
\lstinline|IrrelevantIdeal|.
In \lstinline|connectedness.m2| we let
\lstinline|AdjacentComponents| be the list of pairs
\(
   \left(
    P,
    P '
   \right)
\)
like this. We then turn it into the data \lstinline|Edges| of edges and then turn it into the graph \lstinline|g| by the command \lstinline|graph|.
Finally the command \lstinline|isConnected| applied to \lstinline|g| returns the value \lstinline|true|, showing the connectedness of the graph
\lstinline|g|.

\begin{lstlisting}
    AdjacentComponents = apply(
    toList(0..17),
    i-> select(
            toList(0..17),
            j -> (j != i) and not isSubset(
                IrrelevantIdeal,
                radical monomialIdeal(IrreducibleComponents#i+IrreducibleComponents#j)
                )
        )
    );
    
    needsPackage "Graphs";

    Edges' = apply(
        toList(0..17),
        i -> for j in {0,1} list(
                {
                    i,
                    (AdjacentComponents#i)#j
                }
            )
        );
    Edges = flatten Edges';

    g = graph(Edges);

    print isConnected g
\end{lstlisting}

%
%
\bibliographystyle{alpha}
\bibliography{mainbibs}

\begin{thebibliography}{LBSVdB96}

\bibitem[ACT02]{MR1910264}
Daniel Allcock, James~A. Carlson, and Domingo Toledo.
\newblock The complex hyperbolic geometry of the moduli space of cubic surfaces.
\newblock {\em J. Algebraic Geom.}, 11(4):659--724, 2002.

\bibitem[AOU]{abdelgadir2014compact}
Tarig Abdelgadir, Shinnosuke Okawa, and Kazushi Ueda.
\newblock Compact moduli of noncommutative projective planes.
\newblock arXiv:1411.7770 (2014).

\bibitem[AS87]{Artin_Schelter}
Michael Artin and William~F. Schelter.
\newblock Graded algebras of global dimension $3$.
\newblock {\em Adv. in Math}, 66(2):171--216, 1987.

\bibitem[AZ94]{Artin-Zhang_NPS}
M.~Artin and J.~J. Zhang.
\newblock Noncommutative projective schemes.
\newblock {\em Adv. Math.}, 109(2):228--287, 1994.

\bibitem[Bel98]{MR2716762}
Pavel Belorousski.
\newblock {\em Chow rings of moduli spaces of pointed elliptic curves}.
\newblock ProQuest LLC, Ann Arbor, MI, 1998.
\newblock Thesis (Ph.D.)--The University of Chicago.

\bibitem[BF06]{MR2216264}
Gilberto Bini and Claudio Fontanari.
\newblock Moduli of curves and spin structures via algebraic geometry.
\newblock {\em Trans. Amer. Math. Soc.}, 358(7):3207--3217, 2006.

\bibitem[BKR01]{Bridgeland-King-Reid}
Tom Bridgeland, Alastair King, and Miles Reid.
\newblock The {M}c{K}ay correspondence as an equivalence of derived categories.
\newblock {\em J. Amer. Math. Soc.}, 14(3):535--554 (electronic), 2001.

\bibitem[Bon89]{Bondal_RAACS}
A.~I. Bondal.
\newblock Representations of associative algebras and coherent sheaves.
\newblock {\em Izv. Akad. Nauk SSSR Ser. Mat.}, 53(1):25--44, 1989.

\bibitem[BP93]{MR1230966}
A.~I. Bondal and A.~E. Polishchuk.
\newblock Homological properties of associative algebras: the method of helices.
\newblock {\em Izv. Ross. Akad. Nauk Ser. Mat.}, 57(2):3--50, 1993.

\bibitem[BP08]{Bergman-Proudfoot}
Aaron Bergman and Nicholas~J. Proudfoot.
\newblock Moduli spaces for {B}ondal quivers.
\newblock {\em Pacific J. Math.}, 237(2):201--221, 2008.

\bibitem[CIK18]{MR3803802}
Alastair Craw, Yukari Ito, and Joseph Karmazyn.
\newblock Multigraded linear series and recollement.
\newblock {\em Math. Z.}, 289(1-2):535--565, 2018.

\bibitem[CLS11]{MR2810322}
David~A. Cox, John~B. Little, and Henry~K. Schenck.
\newblock {\em Toric varieties}, volume 124 of {\em Graduate Studies in Mathematics}.
\newblock American Mathematical Society, Providence, RI, 2011.

\bibitem[CS08]{Craw-Smith}
Alastair Craw and Gregory~G. Smith.
\newblock Projective toric varieties as fine moduli spaces of quiver representations.
\newblock {\em Amer. J. Math.}, 130(6):1509--1534, 2008.

\bibitem[dNvdB04]{MR2058456}
Koen de~Naeghel and Michel van~den Bergh.
\newblock Ideal classes of three-dimensional {S}klyanin algebras.
\newblock {\em J. Algebra}, 276(2):515--551, 2004.

\bibitem[Dol12]{MR2964027}
Igor~V. Dolgachev.
\newblock {\em Classical algebraic geometry}.
\newblock Cambridge University Press, Cambridge, 2012.
\newblock A modern view.

\bibitem[DvGK05]{MR2196731}
I.~Dolgachev, B.~van Geemen, and S.~Kond{\=o}.
\newblock A complex ball uniformization of the moduli space of cubic surfaces via periods of {$K3$} surfaces.
\newblock {\em J. Reine Angew. Math.}, 588:99--148, 2005.

\bibitem[Gin06]{Ginzburg_CYA}
Victor Ginzburg.
\newblock {C}alabi-{Y}au algebras.
\newblock math.AG/0612139, 2006.

\bibitem[HIO14]{MR3144232}
Martin Herschend, Osamu Iyama, and Steffen Oppermann.
\newblock {$n$}-representation infinite algebras.
\newblock {\em Adv. Math.}, 252:292--342, 2014.

\bibitem[HKT09]{MR2534095}
Paul Hacking, Sean Keel, and Jenia Tevelev.
\newblock Stable pair, tropical, and log canonical compactifications of moduli spaces of del {P}ezzo surfaces.
\newblock {\em Invent. Math.}, 178(1):173--227, 2009.

\bibitem[Kel94]{Keller_DDC}
Bernhard Keller.
\newblock Deriving {DG} categories.
\newblock {\em Ann. Sci. \'Ecole Norm. Sup. (4)}, 27(1):63--102, 1994.

\bibitem[Kel03]{keller2003derived}
Bernhard Keller.
\newblock Derived invariance of higher structures on the {H}ochschild complex.
\newblock available at \url{https://webusers.imj-prg.fr/~bernhard.keller/publ/dih.pdf}, 2003.

\bibitem[Kel11]{Keller_DCYC}
Bernhard Keller.
\newblock Deformed {C}alabi-{Y}au completions.
\newblock {\em J. Reine Angew. Math.}, 654:125--180, 2011.
\newblock With an appendix by Michel Van den Bergh.

\bibitem[Kin94]{King}
A.~D. King.
\newblock Moduli of representations of finite-dimensional algebras.
\newblock {\em Quart. J. Math. Oxford Ser. (2)}, 45(180):515--530, 1994.

\bibitem[KN98]{MR1642152}
B.~V. Karpov and D.~Yu. Nogin.
\newblock Three-block exceptional sets on del {P}ezzo surfaces.
\newblock {\em Izv. Ross. Akad. Nauk Ser. Mat.}, 62(3):3--38, 1998.

\bibitem[KV00]{Kapranov-Vasserot}
M.~Kapranov and E.~Vasserot.
\newblock Kleinian singularities, derived categories and {H}all algebras.
\newblock {\em Math. Ann.}, 316(3):565--576, 2000.

\bibitem[LBSVdB96]{MR1429334}
Lieven Le~Bruyn, S.~P. Smith, and Michel Van~den Bergh.
\newblock Central extensions of three-dimensional {A}rtin-{S}chelter regular algebras.
\newblock {\em Math. Z.}, 222(2):171--212, 1996.

\bibitem[Min12]{MR2874928}
Hiroyuki Minamoto.
\newblock Ampleness of two-sided tilting complexes.
\newblock {\em Int. Math. Res. Not. IMRN}, (1):67--101, 2012.

\bibitem[MM11]{MR2770441}
Hiroyuki Minamoto and Izuru Mori.
\newblock The structure of {AS}-{G}orenstein algebras.
\newblock {\em Adv. Math.}, 226(5):4061--4095, 2011.

\bibitem[Nar82]{MR662660}
Isao Naruki.
\newblock Cross ratio variety as a moduli space of cubic surfaces.
\newblock {\em Proc. London Math. Soc. (3)}, 45(1):1--30, 1982.
\newblock With an appendix by Eduard Looijenga.

\bibitem[Oka16]{MR3466868}
Shinnosuke Okawa.
\newblock On images of {M}ori dream spaces.
\newblock {\em Math. Ann.}, 364(3-4):1315--1342, 2016.

\bibitem[Orl92]{Orlov_PB}
D.~O. Orlov.
\newblock Projective bundles, monoidal transformations, and derived categories of coherent sheaves.
\newblock {\em Izv. Ross. Akad. Nauk Ser. Mat.}, 56(4):852--862, 1992.

\bibitem[Orl09]{Orlov2009}
Dmitri Orlov.
\newblock {\em Derived Categories of Coherent Sheaves and Triangulated Categories of Singularities}, pages 503--531.
\newblock Birkh{\"a}user Boston, Boston, 2009.

\bibitem[OU]{2007.07620}
Shinnosuke Okawa and Kazushi Ueda.
\newblock {AS}-regular algebras from acyclic spherical helices.
\newblock arXiv:2007.07620.

\bibitem[Ric89]{Rickard}
Jeremy Rickard.
\newblock Morita theory for derived categories.
\newblock {\em J. London Math. Soc. (2)}, 39(3):436--456, 1989.

\bibitem[Smi96]{MR1388568}
S.~Paul Smith.
\newblock Some finite-dimensional algebras related to elliptic curves.
\newblock In {\em Representation theory of algebras and related topics ({M}exico {C}ity, 1994)}, volume~19 of {\em CMS Conf. Proc.}, pages 315--348. Amer. Math. Soc., Providence, RI, 1996.

\bibitem[Ued05]{Ueda_SMQCCS}
Kazushi Ueda.
\newblock Stokes matrix for the quantum cohomology of cubic surfaces.
\newblock math.AG/0505350, 2005.

\bibitem[UY11]{Ueda-Yamazaki_NBTMQ}
Kazushi Ueda and Masahito Yamazaki.
\newblock A note on dimer models and {M}c{K}ay quivers.
\newblock {\em Comm. Math. Phys.}, 301(3):723--747, 2011.

\bibitem[UY13]{MR3100950}
Kazushi Ueda and Masahito Yamazaki.
\newblock Homological mirror symmetry for toric orbifolds of toric del {P}ezzo surfaces.
\newblock {\em J. Reine Angew. Math.}, 680:1--22, 2013.

\bibitem[VdB97]{MR1469646}
Michel Van~den Bergh.
\newblock Existence theorems for dualizing complexes over non-commutative graded and filtered rings.
\newblock {\em J. Algebra}, 195(2):662--679, 1997.

\bibitem[VdB01]{MR1846352}
Michel Van~den Bergh.
\newblock Blowing up of non-commutative smooth surfaces.
\newblock {\em Mem. Amer. Math. Soc.}, 154(734):x+140, 2001.

\bibitem[Yek92]{Yekutieli_DCNGA}
Amnon Yekutieli.
\newblock Dualizing complexes over noncommutative graded algebras.
\newblock {\em J. Algebra}, 153(1):41--84, 1992.

\end{thebibliography}

\end{document}